\newtheorem{thm}[subsection]{Theorem}
\newtheorem{defn}[subsection]{Definition}
\newtheorem{prop}[subsection]{Proposition}
\newtheorem{cor}[subsection]{Corollary}
\newtheorem{lemma}[subsection]{Lemma}
\newtheorem{remark}[subsection]{Remark}
\newtheorem{remarks}[subsection]{Remarks}
\theoremstyle{definition}
\newtheorem{example}[subsection]{Example}
\numberwithin{equation}{section}
\def\quot#1#2{#1/\!\!/#2}
\def\C{\mathbb {C}}
\def\NN{\mathcal N}
\def\OO{\mathcal O}
\def\N{\mathbb N}
\def\SL{\operatorname{SL}}
\def\GL{\operatorname{GL}}
\def\SO{\operatorname{SO}}
\def\Orth{\operatorname{O}}
\def\inv{^{-1}}
\def\lie#1{{\mathfrak #1}}
\def\lieg{\lie g}
\def\phi{{\varphi}}
\def\O{\mathcal O}
\def\pr{{\operatorname{pr}}}
\def\ra{\rightarrow}
\def\bra{\langle}
\def\ket{\rangle}
\def\cA{{\mathcal A}}
\def\cO{{\mathcal O}}
\def\cV{{\mathcal V}}
\def\gg{{\mathfrak g}}
\def\gl{{\mathfrak l}}
\def\go{{\mathfrak o}}
\def\gp{{\mathfrak p}}
\def\gs{{\mathfrak s}}
\def\codim{\operatorname{codim}}
\def\ZZ{\mathbb Z}
\DeclareMathOperator{\Hom}{Hom}
\DeclareMathOperator{\Sp}{Sp}
\DeclareMathOperator{\Spec}{Spec}
\DeclareMathOperator{\wt}{{wt}}
\newfont{\german}{eufm10}
\begin{document}
\pagestyle{plain}

\title
{Jet schemes and invariant theory}

\author{Andrew R. Linshaw, Gerald W. Schwarz, and Bailin Song}
\address{Department of Mathematics, University of Denver}
\email{andrew.linshaw@du.edu}

\address{Department of Mathematics, Brandeis University}
\email{schwarz@brandeis.edu}

\address{Department of Mathematics, University of Science and Technology of China}
\email{bailinso@ustc.edu.cn}
\thanks{The authors thank Peter Littelmann for useful conversations.}

\begin{abstract} Let $G$ be a complex reductive group and  $V$ a $G$-module. Then the $m$th jet scheme $G_m$ acts on the $m$th jet scheme $V_m$ for all $m\geq 0$. We are interested in the invariant ring $\cO(V_m)^{G_m}$ and whether the map $p_m^*\colon\cO((\quot VG)_m) \rightarrow \cO(V_m)^{G_m}$ induced by the categorical quotient map $p\colon V\ra \quot VG$ is an isomorphism, surjective, or neither. Using Luna's slice theorem, we give criteria for $p_m^*$ to be an isomorphism for all $m$, and we prove this when $G=\SL_n$, $\GL_n$, $\SO_n$, or $\Sp_{2n}$ and $V$ is a sum of copies of the standard module and its dual, such that $\quot VG$ is smooth or a complete intersection. We classify all representations of $\mathbb{C}^*$ for which $p^*_{\infty}$ is surjective or an isomorphism. Finally, we give examples where $p^*_m$ is surjective for $m=\infty$ but not for finite $m$, and where it is surjective but not injective. \end{abstract}

\keywords{jet schemes, classical invariant theory}
\subjclass[2010]{13A50,14L24,14L30}
\maketitle

\section{Introduction}
Given an irreducible scheme $X$ of finite type over an algebraically closed field $k$, the first jet scheme $X_1$ is just the total tangent space of $X$. For $m>1$, the $m^{\text{th}}$ jet scheme $X_m$ is a higher-order generalization that is determined by its functor of points. For every $k$-algebra $A$, we have a bijection
$$\text{Hom} (\text{Spec}  (A), X_m) \cong \text{Hom} (\text{Spec}  (A[t]/\langle t^{m+1}\rangle ), X).$$ When $X$ is nonsingular, $X_m$ is irreducible for all $m\geq 1$, and is an affine bundle over $X$ with fiber an affine space of dimension $m\ \text{dim}(X)$. If $X$ is singular, the jet schemes are much more subtle and carry information about the singularities of $X$. The structural properties of $X_m$ are of interest, in particular the question of when $X_m$ is irreducible for all $m$. Mustata has shown that this holds when $X$ is locally a complete intersection with rational singularities, although these are not necessary conditions \cite{Mu}.

There are projections $X_{m+1} \rightarrow X_{m}$, and the {\it arc space} is defined to be $$X_{\infty}=\lim_{\leftarrow} X_m.$$ Even though it is generally not of finite type, $X_{\infty}$ has some nicer properties than $X_m$; for example, it is always irreducible \cite{Kol}. Arc spaces were originally studied by Nash in an influential paper \cite{Na}, in which he asked whether there is a bijection between the irreducible components of $X_{\infty}$ lying over the singular locus of $X$, and the essential divisors over $X$. This question is known as the {\it Nash problem}. It has been answered affirmatively for many classes of singular varieties, although counterexamples are known \cite{IK}. Arc spaces are also important in Kontsevich's theory of {\it motivic integration} \cite{Kon}. Given a complex algebraic variety $X$ and a resolution of singularities $Y \rightarrow X$ such that the discrepancy divisor $D$ has simple normal crossings, the motivic integral of $X$ is the integral of a certain function $F_D$ defined on the arc space $Y_{\infty}$, with respect to a measure on $Y_{\infty}$. Unlike ordinary integration, this measure takes values not in $\mathbb{R}$, but in a certain completion of the Grothendieck ring of algebraic varieties. Motivic integration was originally used by Kontsevich to prove that birationally equivalent Calabi-Yau manifolds have the same Hodge numbers. This theory was subsequently developed by many authors including Batyrev, Denef, Loeser, Looijenga, Craw, and Veys \cite{Bat}\cite{DLI}\cite{Loo}\cite{Cr}\cite{Ve}. A survey of these ideas and some of their applications can be found in \cite{DLII}.

Our goal in this paper is to establish some foundational results on the interaction between jet schemes, arc spaces, and {\it classical invariant theory}. If $G$ is a complex reductive group, $G_m$ is an algebraic group which is a unipotent extension of $G$. Let $Y$ be an affine $G$-variety and let $p\colon Y\to\quot YG=\text{Spec}(\cO(Y)^G)$ be the categorical quotient. Then $p$ induces a morphism $p_m\colon Y_m\ra(\quot YG)_m$ and a homomorphism
\begin{equation} \label{natmap}
p_m^*:\cO((\quot YG)_m) \rightarrow \cO(Y_m)^{G_m},
\end{equation} 
which was studied in some special cases by Eck in \cite{E} and by Frenkel-Eisenbud in the appendix of \cite{Mu}. We will find criteria for when this map is an isomorphism, surjective, or neither. First, using Luna's slice theorem, we show that all of these are local conditions (see Corollary \ref{cor:D-finite}). We are most interested in the case where $Y$ is a $G$-module $V$. Under mild hypotheses (see Corollary \ref{cor:coregii}), we show that when $\quot VG$ is smooth, $p_m^*$ is an isomorphism for all $m$. In Section 4, we give a more refined criterion for $p_m^*$ to be an isomorphism for all $m$ (see Theorem \ref{thm:ci}) and we show that it holds when $G=\SL_n$, $\GL_n$, $\SO_n$, or $\Sp_{2n}$ and $V$ is a sum of copies of the standard representation and its dual, such that $\quot VG$ is a complete intersection. In Section 5, we consider representations of $\mathbb{C}^*$. Using techniques of standard monomial theory, we classify all cases where $p^*_{\infty}$ is surjective, and we show that $p_{\infty}^*$ is an isomorphism whenever it is surjective. In Section 6, we show that for $G=\SL_n$ and $V=\ell\C^n$, $p^*_{\infty}$ is surjective, even though $p^*_m$ generally fails to be surjective for finite values of $m$. For $n=2$, $p^*_{\infty}$ is injective, but it is not injective for $n\geq 3$. The question of whether $p^*_{\infty}$ is surjective for arbitrary representations $V= k \C^n \oplus \ell (\C^n)^*$ of $\SL_n$, and similar questions for the other classical groups, remain open. 

Note that $\cO(V_\infty)^{G_\infty}$ is finitely generated as a differential algebra whenever $p_{\infty}^*$ is surjective, since $\cO((\quot VG)_\infty)$ is generated by $\cO(\quot VG)$ as a differential algebra. An interesting problem is to find sufficient conditions for $\cO(V_\infty)^{G_\infty}$ to be finitely generated as a differential algebra even if $p_{\infty}^*$ is not surjective. There are currently no examples where this is known to occur. Computer experiments suggest that this is the case for $G = \mathbb{C}^*$ and $V=\C^2$ with weights $2$ and $-3$ (see Example \ref{ex:torus}).

Our results have a number of applications to the theory of {\it vertex algebras} that appear in separate papers. Vertex algebras are a class of nonassociative, noncommutative algebras that arose out of conformal field theory in the 1980s, and in the work of Borcherds \cite{Bor} on the Moonshine conjecture. They were developed mathematically from several different points of view in the literature \cite{BD}\cite{FBZ}\cite{FLM}\cite{Ka}. An {\it abelian} vertex algebra is just a commutative ring equipped with a derivation. For any variety $X$, the ring $\cO(X_{\infty})$ has a derivation $D$ which makes it an abelian vertex algebra. On the other hand, many nonabelian vertex algebras $\cA$ possess filtrations for which the associated graded algebra $\text{gr}(\cA)$ is abelian and can be interpreted as $\cO(X_{\infty})$ for some $X$. 

The first application of our results is to the {\it commutant problem}. Given a vertex algebra $\cV$ and a subalgebra $\cA\subset \cV$, the commutant $\text{Com}(\cA,\cV)$ is the subalgebra of $\cV$ that commutes with $\cA$. In \cite{LSS}, interesting examples of commutants were described using the fact that $\text{gr}(\text{Com}(\cA,\cV))$ is isomorphic to $\cO((\quot VG)_\infty)$ for a certain choice of $V$ and $G$. This leads to vertex algebra analogues of the classical Howe pairs of types $\GL_n - \GL_m$, $\SO_n - \gs\gp_{2m}$, and $\Sp_{2n} - \gs\go_{2m}$. The second application of our results is to the {\it chiral de Rham complex} \cite{MSV}. This is a sheaf of vertex algebras on any nonsingular variety or complex manifold $X$ that contains the ordinary de Rham sheaf at weight zero, and captures stringy invariants of $X$ such as the elliptic genus. Using the fact that $p^*_{\infty}$ is an isomorphism for $G = \SL_2$ and $V = \ell \mathbb{C}^2$, Song gave a complete description of the global section algebra when $X$ is a Kummer surface; it is isomorphic to the $N=4$ superconformal algebra with $c=6$ \cite{So}. Previously, the only nontrivial case where a description was known was $\mathbb{C}\mathbb{P}^n$ \cite{MS}.

\section{Jet schemes}
Throughout this paper our base field will be $\mathbb{C}$. We recall some basic facts about jet schemes, following the notation in \cite{EM}. Let $X$ be an irreducible scheme of finite type. For each integer $m\geq 0$, the jet scheme $X_m$ is determined by its functor of points: for every $\mathbb{C}$-algebra $A$, we have a bijection
$$\text{Hom} (\text{Spec}  (A), X_m) \cong \text{Hom} (\text{Spec}  (A[t]/\langle t^{m+1}\rangle ), X).$$ Thus the $\mathbb{C}$-valued points of $X_m$ correspond to the $\mathbb{C}[t]/\langle t^{m+1}\rangle$-valued points of $X$. If $p>m$, we have projections $\pi_{p,m}: X_p \rightarrow X_m$ and $\pi_{p,m} \circ \pi_{q,p} = \pi_{q,m}$  when $q>p>m$. Clearly $X_0 = X$ and $X_1$ is the total tangent space $\text{Spec}(\text{Sym}(\Omega_{X/\mathbb{C}}))$. The assignment $X\mapsto X_m$ is functorial, and a morphism $f:X\ra Y$ induces $f_m: X_m \ra Y_m$ for all $m\geq 1$. If $X$ is nonsingular, $X_m$ is irreducible and nonsingular for all $m$. Moreover, if $X,Y$ are nonsingular and $f:X\ra Y$ is a smooth surjection, $f_m$ is surjective for all $m$. 

If $X=\text{Spec}(R)$ where $R= \mathbb{C}[y_1,\dots,y_r] / \langle f_1,\dots, f_k\rangle$, we can find explicit equations for $X_m$. Define new variables $y_j^{(i)}$ for $i=0,\dots, m$, and define a derivation $D$ by $D(y_j^{(i)}) = y_j^{(i+1)}$ for $i<m$, and $D(y_j^{(m)}) =0$, which specifies its action on all of $\mathbb{C}[y_1^{(0)},\dots, y_r^{(m)}]$. In particular, $f_{\ell}^{(i)} = D^i ( f_{\ell})$ is a well-defined polynomial in $\mathbb{C}[y_1^{(0)},\dots, y_r^{(m)}]$. Letting $R_m = \mathbb{C}[y_1^{(0)},\dots, y_r^{(m)}] / \langle f_1^{(0)},\dots, f_k^{(m)}\rangle$, we have $X_m\cong \text{Spec}(R_m)$. By identifying $y_j$ with $y_j^{(0)}$, we see that $R$ is naturally a subalgebra of $R_m$. There is a $\mathbb{Z}_{\geq 0}$-grading $R_m = \bigoplus_{n\geq 0} R_m[n]$ by weight, defined by $\wt(y^{(i)}_j) = i$. For all $m$, $R_m[0] = R$ and $R_m[n]$ is an $R$-module.

Given a scheme $X$, define $$X_{\infty}  = \lim_{\leftarrow } X_m,$$ which is known as the {\it arc space} of $X$. For a $\mathbb{C}$-algebra $A$, we have a bijection
$\text{Hom} (\text{Spec} (A), X_{\infty} ) \cong \text{Hom} (\text{Spec} A[[t]], X)$. We denote by $\psi_m$ the natural map $X_{\infty} \rightarrow X_m$. If $X = \text{Spec}(R)$ as above, $$X_{\infty} \cong \text{Spec} (R_{\infty}), \text{where} \ R_{\infty}  = \mathbb{C}[y_1^{(0)},\dots,y_j^{(i)},\dots] / \bra f_1^{(0)},\dots, f_\ell^{(i)},\dots\ket.$$ Here $i=0,1,2,\dots$ and $D (y^{(i)}_j) = y^{(i+1)}_j$ for all $i$. By a theorem of Kolchin \cite{Kol}, $X_{\infty}$ is irreducible whenever $X$ is irreducible.

\section{Group actions on jet schemes}\label{sec:groupsacting}
We establish some elementary properties of jet schemes and quotient mappings for reductive group actions. Mainly we see what one can say using Luna's slice theorem \cite{LunaSlice}. Let $G$ be a complex reductive algebraic group with Lie algebra $\gg$. For $m\geq 1$, $G_m$ is an algebraic group which is the semidirect product of $G$ with a unipotent group $U_m$. The Lie algebra of $G_m$ is $\gg[t]/t^{m+1}$. Given an affine $G$-variety $Y$, there is the quotient $Z:=\quot YG=\Spec(\OO(Y)^G)$ and the canonical map $p\colon   Y\to Z$ (sometimes denoted $p_Y$) which is dual to the inclusion $\OO(Y)^G\subset\OO(Y)$.  We have 
a natural action of $G_m$ on $Y_m$, and we are interested in the invariant ring $\OO(Y_m)^{G_m}$, the morphism $p_m \colon  Y_m\to Z_m$ and whether $p_m^*: \OO(Z_m)\ra \OO(Y_m)^{G_m}$ is an isomorphism, surjective, or neither. If $\phi\colon X\to Y$ is a morphism of affine $G$-varieties, then $\quot \phi G$ will denote the induced mapping of $\quot XG$ to $\quot YG$.

Recall that a morphism of varieties is {\it \'etale} if it is smooth with fibers of dimension zero. If $\phi\colon X\to Y$ is a morphism where $X$ and $Y$ are smooth, then $\phi$ is \'etale if and only if $d\phi_x: T_xX\to T_{\phi(x)}Y$ is an isomorphism for all $x\in X$.

\begin{defn}
Let $G$ be a reductive complex algebraic group and let $\phi\colon X\to Y$ be an equivariant map of affine $G$-varieties. We say that $\phi$ is \emph{excellent\/} if the following hold.
\begin{enumerate}
\item $\phi$ is \'etale.
\item $\quot\phi G:\quot XG\to \quot YG$ is \'etale.
\item The canonical map $(\phi,p_X)\colon X\to Y\times_{\quot YG}\quot XG$ is an isomorphism.
\end{enumerate}
\end{defn}

Note that condition (1) is a consequence of conditions (2) and (3). Let us say that $X$ is \emph{$m$-very good \/} if $p_m^*: \O((\quot XG)_m) \ra \O(X_m)^{G_m}$ is an isomorphism. We say that $X$ is \emph{$m$-good \/} if $p_m^*$ is surjective, so that $\O(X_m)^{G_m} = p_m^*\O((\quot XG)_m)$, and we say that $X$ is \emph{$m$-bad\/} if $p_m^*$ is not surjective. Here $m$ is finite or $\infty$. Usually we drop the $m$. We say that $X$ is \emph{$D$-finite\/} if $\cO(X_\infty)^{G_\infty}$ is finitely generated as a differential algebra.

\begin{lemma}\label{lem:excellent}
Suppose that $\phi\colon X\to Y$ is excellent.   Then
\begin{enumerate}
\item $X_m\simeq \quot XG\times_{\quot YG} Y_m$.
\item $(\quot XG)_m\simeq \quot XG\times_{\quot YG}(\quot YG)_m$.
\end{enumerate}
If  $Y$ is very good (resp.\ good or $D$-finite), then so is $X$, and conversely if $\phi$ is surjective.
\end{lemma}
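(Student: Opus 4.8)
The plan is to reduce the assertions for $X$ to the corresponding assertions for $Y$ by flat base change along the \'etale map $\bar\phi:=\quot\phi G\colon Z\to W$, where I abbreviate $Z=\quot XG$ and $W=\quot YG$. By excellence $\bar\phi$ is \'etale, so $\bar\phi^*\colon\O(W)\to\O(Z)$ is flat; and it is faithfully flat once $\phi$ is surjective, since then $\bar\phi$ is surjective (from $\bar\phi\circ p_X=p_Y\circ\phi$ with $p_X,p_Y$ surjective). Note $G$ acts trivially on $Z$ and $W$, hence $G_m$ acts trivially on $Z_m$ and $W_m$ for every $m$, finite or $\infty$.

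The first step is to establish the key identity: for every $m$,
\[
\O(X_m)^{G_m}\;\cong\;\O(Z_m)\otimes_{\O(W_m)}\O(Y_m)^{G_m},
\]
under which $p_{X,m}^*$ corresponds to the base change $p_{Y,m}^*\otimes\id_{\O(Z_m)}$ of $p_{Y,m}^*$ along the flat map $\O(W_m)\to\O(Z_m)$. This follows by combining parts (1) and (2): part (1) gives $X_m\cong Z\times_W Y_m$, and since $Z_m\cong Z\times_W W_m$ by part (2), associativity of the fiber product yields $X_m\cong Z_m\times_{W_m}Y_m$, a $G_m$-equivariant isomorphism with $G_m$ acting trivially on $Z_m$ and naturally on $Y_m$. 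Hence $\O(X_m)=\O(Z_m)\otimes_{\O(W_m)}\O(Y_m)$, and because $\O(W_m)\to\O(Z_m)$ is flat and $G_m$-equivariant (with $\O(Z_m)$ fixed), flat base change commutes with the formation of invariants, as $(-)^{G_m}$ is the kernel of the coaction map and flatness preserves kernels. Naturality of $(-)_m$ identifies $p_{X,m}^*$ with the base change of $p_{Y,m}^*$.

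Granting this, the forward implications are immediate. If $Y$ is very good then $p_{Y,m}^*$ is an isomorphism, and base change along a flat map keeps it an isomorphism, so $X$ is very good; if $Y$ is good then $p_{Y,m}^*$ is surjective and right exactness of $\otimes$ keeps it surjective, so $X$ is good. For the converses, when $\phi$ is surjective the map $\O(W_m)\to\O(Z_m)$ is faithfully flat, being the base change of the \'etale surjection $\bar\phi$, and faithfully flat descent reflects both isomorphisms and surjectivity of module maps, so these properties for $X$ force them for $Y$. For $D$-finiteness I would run the same identity at $m=\infty$, where $\O(X_\infty)^{G_\infty}\cong\O(Z_\infty)\otimes_{\O(W_\infty)}\O(Y_\infty)^{G_\infty}$ carries the arc-space derivation $D=D_Z\otimes1+1\otimes D_Y$ (well defined as both summands restrict to $D_W$ on $\O(W_\infty)$); taking finitely many differential generators $c_i$ of $\O(Z_\infty)$ (for instance $\C$-algebra generators of the affine variety $Z$) together with differential generators $b_j$ of $\O(Y_\infty)^{G_\infty}$ gives a finite differential generating set $\{b_j\otimes1,\,1\otimes c_i\}$ of the tensor product, so $X$ is $D$-finite.

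The main obstacle is the converse for $D$-finiteness: descending finite generation as a \emph{differential} algebra through the faithfully flat map $\O(W_\infty)\to\O(Z_\infty)$. For ordinary algebras, being of finite type descends along faithfully flat morphisms, and I expect the argument to adapt, testing the finite generation of $\O(Y_\infty)^{G_\infty}$ after the faithfully flat extension; but this is the step requiring genuine care, since one must control differential rather than merely algebraic generators under descent. The remaining points to check are routine: the $G_m$-equivariance of the isomorphism in part (1) and the compatibility of $p_{X,m}^*$ with base change, both of which follow from functoriality of the jet construction.
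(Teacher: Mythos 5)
Your treatment of parts (1)--(2) and of the good/very good assertions matches the paper's argument in substance: the paper likewise obtains $\O(X_m)^{G_m}$ from $\O(Y_m)^{G_m}$ by flat base change along $\quot\phi G$ and descends the converse by faithful flatness, so that portion of your proposal is fine (modulo the fact that you invoke (1) and (2) rather than proving them --- they follow from $X_m\simeq X\times_Y Y_m$ for \'etale $\phi$ together with condition (3) of excellence, which is how the paper derives them, and this should be written out since they are part of the statement).

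The genuine gap is exactly where you flag it: the converse for $D$-finiteness. Your hope that ``finite type descends along faithfully flat morphisms'' adapts does not go through directly, because a differentially finitely generated algebra is typically \emph{not} of finite type as an ordinary algebra (its algebra generators are the infinitely many $D^k$ of the differential generators), so the standard descent statement says nothing here. The paper closes this gap with a graded argument. Write $A=\cO(X_\infty)^{G_\infty}$ and $B=\cO(Y_\infty)^{G_\infty}$, graded by weight with $A\cong B\otimes_{B_0}\cO(X)^G$ an isomorphism of graded rings, $B_0=\cO(Y)^G$ and $A_0=\cO(X)^G$; each $B_n$ is a finitely generated $B_0$-module. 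One produces an explicit finite candidate set of differential generators of $B$: the $B$-components $f_i$ of a finite set of bihomogeneous differential generators $f_i\otimes h_i$ of $A$, together with the weight-one elements $f_{ij}\in B_1$ appearing in $Dp_j=\sum f_{ij}\otimes h_{ij}$ for generators $p_1,\dots,p_d$ of $\cO(X)^G$ (these are needed because $D$ on $A$ is not $1\otimes D_Y$ --- it also differentiates the $\cO(X)^G$-factor). An induction shows that the differential subalgebra generated by the $f_i\otimes h_i$ lands in the $\cO(X)^G$-span of products of the $f_i$ and $f_{ij}$; then, degree by degree, the $B_0$-submodule of $B_n$ generated by these elements becomes all of $A_n$ after the faithfully flat extension $B_0\to\cO(X)^G$, hence equals $B_n$. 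Without some such device for controlling where $D$ sends the $\cO(X)^G$-factor, the descent of \emph{differential} finite generation is not established, so as written your proof of the last assertion is incomplete.
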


\begin{proof}
Since $\phi$ is \'etale,  $X_m\simeq X\times_Y Y_m$. Since $\phi$ is excellent,  
$$
X\times_Y Y_m\simeq \quot XG\times_{\quot YG} Y\times_Y Y_m\simeq \quot XG\times_{\quot YG} Y_m
$$ 
and since $\quot \phi G$ is \'etale, $(\quot XG)_m\simeq \quot XG\times_{\quot YG}(\quot YG)_m$. Thus we have (1) and (2).

If $Y$ is very good, then by (1) and (2)
$$
\O(X_m)^{G_m}\simeq   \O(\quot XG)\otimes_{\O(\quot YG)}\O((\quot YG)_m)\simeq \O((\quot XG)_m),
$$
hence $X$ is very good. Similarly, $Y$ good implies that $X$ is good.

Conversely, if $\phi$ is surjective and $\O(Y_m)^{G_m}\neq p_{Y,m}^*\O((\quot YG)_m)$, then, since $\quot\phi G$ is faithfully flat, we have that 
$$
\O(\quot XG)\otimes_{\O(\quot YG)} \O(Y_m)^{G_m}\neq \O(\quot XG)\otimes_{\O(\quot YG)}p_{Y,m}^*\O((\quot YG)_m)
$$
 and hence that $\O(X_m)^{G_m}\neq p_{X,m}^*\O((\quot XG)_m)$. Hence $Y$ is good if  $X$ is good. The proof that $Y$ is very good if $X$ is very good is similar.

Now 
$$
\cO(X_{\infty})^{G_{\infty}} \cong   \cO(Y_{\infty})^{G_{\infty}}\otimes_{\cO(Y)^G}\cO(X)^G.
$$
Thus if $Y$ is $D$-finite, then clearly so is $X$. Conversely, assume that $X$ is $D$-finite and that $\phi$ is surjective. Set  $A:=\cO(X_\infty)^{G_\infty}$. Then we have the weight grading $A=\oplus_{n\in\N} A_n$  where $A_0=\O(X)^G$.  Let $B$ denote $\cO(Y_\infty)^{G_\infty}$. Then $B$ is graded and the isomorphism $A\cong B\otimes_{\cO(Y)^G} \cO(X)^G$ is an isomorphism of graded rings.  
Let $f_i\otimes h_i$ be generators of $A\simeq B\otimes_{\cO(Y)^G}\cO(X)^G$ as differential graded algebras. We may assume that each $f_i$ has weight $n_i$ for some $n_i\in\N$.  Let $p_1,\dots,p_d$ be generators of $\cO(X)^G$. Then $Dp_j=\sum f_{ij}\otimes h_{ij}$ where the $h_{ij}$ are elements of $\cO(X)^G$ and the $f_{ij}$ are in $B_1$. Now take the collection of elements $f_i$ and $f_{ij}$ in $B$.  An induction argument shows that $D$ applied repeatedly to the elements $f_i\otimes h_i$ ends up in the $\cO(X)^G$-submodule of $A$ generated by $D$ applied to products of the elements $f_i$ and $f_{ij}$ . Since $\phi$ is faithfully flat, this shows that the $B_0$-submodule of $B_n$ generated by the elements $f_i$ and $f_{ij}$ is  $B_n$ since this submodule tensored with $\cO(X)^G$ is $A_n$. Hence the $f_i$ and $f_{ij}$ generate $B_n$ for all $n$ and $Y$ is $D$-finite. 
\end{proof}

A subset $S$ of $X$ is \emph{$G$-saturated\/}  if $S=p\inv(p(S))$; equivalently, $S$ is a union of fibers of $p$.

\begin{cor}\label{cor:D-finite}
\begin{enumerate}
\item Suppose that $X=\cup X_\alpha$ where the $X_\alpha$ are Zariski open and $G$-saturated. Then $X$ is very good  (resp.\  good or $D$-finite) if and only if each $X_\alpha$ is very good (resp.\ good or $D$-finite).
\item Let $W$ be a $G$-module and $U=W_f$ where $f\in\O(W)^G$ and $f(0)\neq 0$. Then $W$ is very good (resp.\  good or $D$-finite) if and only  $U$ is very good (resp.\ good or $D$-finite).
\end{enumerate}
\end{cor}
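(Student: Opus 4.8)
The plan is to deduce Corollary \ref{cor:D-finite} from Lemma \ref{lem:excellent} by producing, in each case, an \emph{excellent} surjective morphism onto the relevant piece and then invoking the equivalence ``$Y$ very good (resp.\ good or $D$-finite) iff $X$ is, when $\phi$ is surjective.'' The key point in both parts is that an open $G$-saturated inclusion, or a localization at an invariant, is already an open immersion, hence \'etale, and that the slice-theorem condition (3) of excellence is automatic for an open immersion because $Y\times_{\quot YG}\quot XG$ simply recovers the preimage $p\inv(p(X))$, which equals $X$ by $G$-saturation.

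For part (1), I would first treat a single $G$-saturated open $X_\alpha\subset X$. Let $\phi\colon X_\alpha\hookrightarrow X$ be the inclusion. Then $\phi$ is \'etale (it is an open immersion), and since $X_\alpha$ is $G$-saturated we have $X_\alpha=p\inv(p(X_\alpha))$, so $p(X_\alpha)$ is open in $\quot XG$ and $\quot{X_\alpha}G=p(X_\alpha)$; thus $\quot\phi G$ is the open immersion $p(X_\alpha)\hookrightarrow\quot XG$, which is \'etale, giving (1) and (2) of the definition. Condition (3) asks that $X_\alpha\to X\times_{\quot XG}\quot{X_\alpha}G$ be an isomorphism; but the fiber product on the right is exactly $p\inv(p(X_\alpha))=X_\alpha$, so the canonical map is the identity. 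Hence each inclusion is excellent, and Lemma \ref{lem:excellent} shows $X_\alpha$ inherits each of the three properties from $X$. For the converse direction I would use that the three properties are local on $\quot XG$: the invariant ring $\O(X_m)^{G_m}$ and the comparison map $p_m^*$ behave well under the covering $\quot XG=\cup p(X_\alpha)$, so surjectivity/bijectivity of $p_m^*$ globally is equivalent to the same over each open piece, and for $D$-finiteness one patches a finite differential generating set from a finite subcover (using that $\quot XG$ is quasi-compact).

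Part (2) is the special case where $U=W_f$ is the localization of a $G$-module $W$ at an invariant $f$ with $f(0)\neq0$. Here $U=W_f$ is the $G$-saturated Zariski open subset $\{f\neq0\}$ (saturated precisely because $f$ is $G$-invariant, so $\{f\neq0\}$ is a union of fibers of $p$), so this is literally an instance of the situation in part (1) with the single open set $U$; the inclusion $U\hookrightarrow W$ is excellent and surjective onto its image by the same three checks. The condition $f(0)\neq0$ guarantees $0\in U$, which is only needed to ensure $U$ is nonempty and contains the most degenerate fibre, but for the logical equivalence one mainly needs that $U$ is a nonempty $G$-saturated basic open set; applying Lemma \ref{lem:excellent} to $U\hookrightarrow W$ then gives the stated equivalence directly.

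The main obstacle I anticipate is the ``only if'' direction of part (1), i.e.\ concluding the \emph{global} property from the properties of all the $X_\alpha$. Lemma \ref{lem:excellent} as stated transfers properties along a \emph{single} surjective excellent morphism, so the local-to-global step is not an immediate corollary: one must check that $p_m^*$ failing to be surjective (or injective) forces failure over at least one $X_\alpha$, which rests on the fact that an equality of quasi-coherent sheaves on $\quot XG$ can be tested on an open cover. For $D$-finiteness the subtlety is sharper, since finite generation as a differential algebra is not obviously local; I expect to reduce to a finite subcover and argue, as in the converse part of the proof of Lemma \ref{lem:excellent}, that a differential generating set over each $p(X_\alpha)$ assembles into one over $\quot XG$ after clearing denominators by powers of the $f_\alpha$ cutting out the cover. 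Verifying this gluing carefully is the technical heart; the \'etale and fiber-product checks making the inclusions excellent are routine.
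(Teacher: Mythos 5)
There is a genuine gap, and it is concentrated in part (2). You treat the inclusion $U=W_f\hookrightarrow W$ as ``literally an instance of part (1) with the single open set $U$'' and claim that applying Lemma \ref{lem:excellent} to it ``gives the stated equivalence directly.'' But $U$ alone does not cover $W$ (its complement is the hypersurface $\{f=0\}$), so the inclusion is excellent but \emph{not} surjective, and Lemma \ref{lem:excellent} only transfers the properties from target to source in that situation: you get ``$W$ very good $\Rightarrow U$ very good,'' not the converse. Your remark that $f(0)\neq 0$ is ``only needed to ensure $U$ is nonempty'' is the precise point where this goes wrong: the hypothesis is essential, not cosmetic. For instance, with $(W,G)=(\C,\pm 1)$ and $f=z^2$ (so $f(0)=0$), the open set $W_f=\C^*$ carries a free action and is very good, while $W$ is bad by Theorem \ref{thm:badfinite}; so the equivalence fails for a general nonempty $G$-saturated basic open set. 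The paper's argument uses $f(0)\neq 0$ in an essential way: for each $w\in W$ the polynomial $\lambda\mapsto f(\lambda\inv w)$ has nonzero constant term, so $W$ is covered by $U$ together with finitely many scalar translates $U_\lambda=\lambda\cdot U$, each $G$-equivariantly isomorphic to $U$; one then applies part (1) to this finite $G$-saturated open cover.

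For part (1) your forward direction (each inclusion $X_\alpha\hookrightarrow X$ is excellent, so $X$ good implies $X_\alpha$ good) is fine, but you leave the converse as a sketched local-to-global patching argument that you yourself flag as the unverified ``technical heart,'' and for $D$-finiteness that patching is genuinely delicate. The paper sidesteps all of it: after passing to a finite subcover, the single map $\amalg_\alpha X_\alpha\to X$ is excellent \emph{and surjective}, so both directions --- including the $D$-finite case, whose converse was already established in the proof of Lemma \ref{lem:excellent} via faithful flatness and the weight grading --- follow at once, using only that a disjoint union is very good (resp.\ good, $D$-finite) iff each piece is. I would recommend adopting the disjoint-union device for (1) and the translate trick for (2); as written, neither the converse of (1) nor either nontrivial direction of (2) is actually proved.
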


\begin{proof}
For (1) we may assume that we have a finite cover. Then the canonical map $\amalg X_\alpha\to X$ is excellent and surjective and (1) follows from Lemma \ref{lem:excellent}.
For (2) we may assume that $U$ is very good (resp.\  good or $D$-finite).   Now $W$ is the union of $U$ and finitely many translates $U_\lambda$ where $U_\lambda=\lambda\cdot U$ for $\lambda\in\C^*$. Clearly each $U_\lambda$ is very good (resp.\ good or $D$-finite) since $U$ is. Thus we can apply (1).
\end{proof}

Let $H$ be a reductive subgroup of $G$ and $Y$ an affine $H$-variety. Then $G\times^HY$ denotes the quotient of $G\times Y$ by the $H$-action sending $(g,y)$ to $(gh\inv,hy)$ for $(g,y)\in G\times Y$ and $h\in H$. We denote the orbit of $(g,y)$ by $[g,y]$. We  have an action of $G$ on the left on $G\times Y$ which commutes with the action of $H$ and induces a $G$-action on $G\times^H Y$.  Then $\quot{(G\times^HY)}G\simeq\quot YH$.
Note that $G\to G/H$ is a principal $H$-bundle, hence trivial over pullback via an \'etale surjective map to $G/H$.

\begin{lemma}\label{lem:slices}
Let $H$ be a reductive subgroup of $G$ and $Y$ an affine $H$-variety. Then 
$$
 \O((G\times^H Y)_m)^{G_m}\simeq\O(Y_m)^{H_m}.
$$ 
Hence $G\times^HY$ is very good (resp.\ good or $D$-finite) if and only if $Y$ is very good (resp.\ good or $D$-finite). 
\end{lemma}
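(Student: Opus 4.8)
The plan is to show directly that the jet functor converts the bundle picture into the analogous picture for the jet groups, and then to read off the invariants by a formal computation that, crucially, never requires $G_m$ or $H_m$ to be reductive. Write $P=G\times Y$, so that $q\colon P\to G\times^H Y$ is a principal $H$-bundle for the action $h\cdot(g,y)=(gh\inv,hy)$, and the left $G$-action on $G\times^H Y$ is induced by left translation on the $G$-factor.

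\textbf{Step 1: the jet of a principal bundle.} First I would record the general fact that if $P\to B$ is a principal $H$-bundle then $P_m\to B_m$ is a principal $H_m$-bundle. Choose an étale surjection $U\to B$ trivializing the bundle, so that $P\times_B U\simeq U\times H$ equivariantly. The jet functor commutes with fiber products (immediate from its functor of points) and with étale base change (as already used in Lemma~\ref{lem:excellent}); hence $U_m\to B_m$ is again étale surjective, $(P\times_B U)_m\simeq P_m\times_{B_m}U_m$, and $(U\times H)_m\simeq U_m\times H_m$. Thus $P_m\times_{B_m}U_m\simeq U_m\times H_m$, i.e. $P_m\to B_m$ is étale-locally the trivial $H_m$-bundle. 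Applying this to $q$, using $(G\times Y)_m\simeq G_m\times Y_m$ and the fact that the induced $H_m$-action is $h\cdot(g,y)=(gh\inv,hy)$, I conclude
$$
(G\times^H Y)_m\simeq G_m\times^{H_m} Y_m
$$
as $G_m$-varieties, the left $G_m$-action being left translation on the $G_m$-factor.

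\textbf{Step 2: the invariants.} Since a principal bundle is faithfully flat and $G\times^H Y$, hence $(G\times^H Y)_m$, is affine (recall $G/H$ is affine because $H$ is reductive), descent gives $\O(G_m\times^{H_m}Y_m)\simeq(\O(G_m)\otimes\O(Y_m))^{H_m}$. The left $G_m$-action and the right $H_m$-action on the $G_m$-factor commute, so the two invariant-takings may be interchanged. Taking $G_m$-invariants first collapses the $G_m$-factor: left-translation-invariant functions on the group $G_m$ are the constants, so $(\O(G_m)\otimes\O(Y_m))^{G_m}\simeq\O(Y_m)$ via evaluation at the identity, under which the residual $H_m$-action is the natural one on $\O(Y_m)$. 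Hence
$$
\O((G\times^H Y)_m)^{G_m}\simeq\O(Y_m)^{H_m}.
$$

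\textbf{Step 3: transfer of the three properties.} The identity $\quot{(G\times^H Y)}{G}\simeq\quot YH$ noted above yields, after applying the jet functor, $(\quot{(G\times^H Y)}{G})_m\simeq(\quot YH)_m$. I would then check that under this isomorphism together with the one of Step 2, the map $p_m^*$ for $G\times^H Y$ is identified with $p_m^*$ for $Y$; this is a naturality check, both maps being restriction of functions along the respective jet quotient morphisms. Consequently $p_m^*$ is an isomorphism (resp.\ surjective) for $G\times^H Y$ if and only if it is for $Y$, which is the very good (resp.\ good) assertion. All isomorphisms above are natural in $m$ and compatible with the truncations $\pi_{p,m}$, hence with the derivation $D$ in the limit $m=\infty$; so $\O((G\times^H Y)_\infty)^{G_\infty}\simeq\O(Y_\infty)^{H_\infty}$ as differential algebras, giving the $D$-finite equivalence.

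The main obstacle is Step 1: everything rests on the claim that the jet functor sends the principal $H$-bundle $G\times Y\to G\times^H Y$ to a principal $H_m$-bundle. Once this local-triviality-plus-descent argument is in place, the rest is formal --- and, notably, the invariant computation of Step 2 goes through even though $G_m$ and $H_m$ are not reductive, since it uses only the freeness of the $H_m$-action and the triviality of left-invariant functions on a group.
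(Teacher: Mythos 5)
Your proposal is correct and follows essentially the same route as the paper: first identify $(G\times^H Y)_m$ with the quotient of $G_m\times Y_m$ by $H_m$ using local triviality of the principal bundle and the behavior of jets on trivial bundles, then compute the invariants by interchanging the $G_m$- and $H_m$-invariants on $G_m\times Y_m$, using that left-invariant functions on $G_m$ are constants. The paper's proof is just a terser version of your Steps 1 and 2, with the transfer of the three properties left implicit.
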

 
\begin{proof}
 For a trivial principal $H$ bundle $U\times H$, we have 
 $(U\times H)_m=U_m\times H_m$ is a trivial $H_m$-bundle with quotient $U_m$. Thus   $(G\times^H Y)_m$ is the quotient of $G_m\times Y_m$ by the action of $H_m$ (it is a principal bundle). 

Consider the action of $G_m$ on $G_m\times Y_m$. Then the quotient is clearly just projection to $Y_m$, so that $\O(G_m\times Y_m)^{G_m}\simeq\O(Y_m)$. Thus 
$$
\O(G_m\times Y_m)^{G_m\times H_m}\simeq\O(Y_m)^{H_m}
$$ 
so that $\O((G\times^H Y)_m)^{G_m}\simeq\O(Y_m)^{H_m}$.  
\end{proof}

Let $X$ be a smooth  affine $G$-variety and suppose that $Gx$ is a closed orbit. Then the isotropy group $H:=G_x$ is reductive and we have a splitting of $H$-modules  $T_xX=T_x(Gx)\oplus N$. The representation $(N,H)$ is called \emph{the slice representation at $x$\/}.   Here is Luna's slice theorem \cite{LunaSlice} in our context.

\begin{thm}\label{thm:slice}
\begin{enumerate}
\item There is a locally closed
affine $H$-stable and  $H$-saturated subvariety $S$ of $X$ containing
$x$ such that $U := G\cdot S$ is a  $G$-saturated affine open subset of
$X$. Moreover, the canonical $G$-morphism 
$$
\phi \colon G\times^HS\to U,\qquad [g,s] \mapsto gs
$$
is excellent.
\item $S$ is smooth at $x$ and the $H$-modules $T_xS$ and $N$ are isomorphic. 
Possibly shrinking $S$ we can arrange that
there is an excellent surjective $H$-morphism  $\psi : S
\to N_f$ which sends $x$  to $0$,  inducing an excellent
$G$-morphism   
$$
\tau :G\times^HS\to G\times^H N_f
$$
where $f\in\O(N)^H$ and $f(0)\neq 0$.
\end{enumerate}
\end{thm}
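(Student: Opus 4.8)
The plan is to deduce this from \emph{Luna's fundamental lemma}, which is precisely the statement that a sufficiently nondegenerate equivariant \'etale map is excellent near a closed orbit. I would isolate the following as the technical heart and treat it as the known input: if $\phi\colon X\to Y$ is a $G$-morphism of smooth affine $G$-varieties, $x\in X$ has closed orbit $Gx$, $\phi$ is \'etale at $x$, the orbit $G\phi(x)$ is closed, and $\phi$ carries $Gx$ isomorphically onto $G\phi(x)$ (equivalently $G_x=G_{\phi(x)}$), then there is a $G$-saturated affine open $U\ni x$ with $\phi(U)$ $G$-saturated affine open in $Y$ such that $\phi|_U\colon U\to\phi(U)$ is excellent. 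Both assertions of the theorem then follow by applying this lemma to two carefully chosen maps.

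For part (1) I would build the slice concretely. Embed $X$ $G$-equivariantly as a closed subvariety of a $G$-module $E$; then $Gx$ is closed in $E$ and $T_xE=E$. Since $H=G_x$ is reductive, choose an $H$-stable complement $E=T_x(Gx)\oplus P$ with $P\supseteq N$, and set $V_0:=X\cap(x+P)$, an $H$-stable locally closed subvariety with $T_xV_0=T_xX\cap P=N$; in particular $V_0$ is smooth at $x$ of dimension $\dim N$, which already gives the first assertion of (2). The map $\phi\colon G\times^H V_0\to X$, $[g,v]\mapsto gv$, sends $[e,x]$ to $x$, has isotropy $H$ there, carries the closed orbit $G\cdot[e,x]\cong G/H$ isomorphically onto $Gx$, and satisfies $T_{[e,x]}(G\times^H V_0)=\gg/\gh\oplus N=T_x(Gx)\oplus N=T_xX$, so it is \'etale at $[e,x]$. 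The fundamental lemma then produces a $G$-saturated affine open neighborhood of $[e,x]$; since such neighborhoods in $G\times^H V_0$ are exactly of the form $G\times^H S$ for $H$-saturated affine open $S\subseteq V_0$, this yields the slice $S$ and the excellent map $\phi\colon G\times^H S\to U$ with $U=G\cdot S$.

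For part (2) I would produce the comparison map by complete reducibility. The cotangent space $\mathfrak m_x/\mathfrak m_x^2\cong N^*$ is an $H$-module, and since $H$ is reductive I can lift the inclusion $N^*\hookrightarrow\mathfrak m_x/\mathfrak m_x^2$ to an $H$-submodule $N^*\subseteq\mathfrak m_x\subseteq\O(S)$; the resulting $H$-algebra map $\O(N)=\mathrm{Sym}(N^*)\to\O(S)$ is a morphism $\psi\colon S\to N$ with $\psi(x)=0$ and $d\psi_x=\id_N$, hence \'etale at $x$. Both $x$ and $0$ are $H$-fixed, so their orbits are closed points with isotropy $H$, and the fundamental lemma applies again at the level of $H$-varieties; after shrinking it gives an excellent surjection onto a saturated open of $N$, necessarily of the form $N_f$ with $f\in\O(N)^H$, $f(0)\neq0$. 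Inducing up, $\tau=\id_G\times^H\psi\colon G\times^H S\to G\times^H N_f$ is excellent---preservation of excellence under $(-)\times^H(-)$ following from the identification $\quot{(G\times^H S)}G\cong\quot SH$ used before Lemma~\ref{lem:slices}, together with the fiber-product description of condition (3).

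The main obstacle is the fundamental lemma itself, and within it the descent conditions (2) and (3) of excellence. The \'etaleness of $\phi$ on a $G$-stable neighborhood is routine from semicontinuity and equivariance, but showing that $\quot\phi G$ is \'etale at $p(x)$, and that $X\simeq Y\times_{\quot YG}\quot XG$, genuinely uses reductivity: one passes to completed local rings of the quotients, exploits that $\hat{\O}_{X,x}$ decomposes as an $H$-representation compatibly with $\phi$, and concludes by faithfully flat descent along the quotient maps. This is where all the real content lies; the two applications above are then bookkeeping with the $G\times^H(-)$ construction.
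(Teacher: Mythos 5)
The paper offers no proof of this statement: it is quoted verbatim as Luna's slice theorem from \cite{LunaSlice}, merely rephrased using the paper's notion of ``excellent'' morphism. Your sketch is a correct reconstruction of Luna's own argument --- equivariant embedding into a $G$-module, an $H$-stable complement to $T_x(Gx)$ to produce the slice, and two applications of the fundamental lemma (whose content you rightly identify as the real work) --- so it matches the cited source's approach rather than diverging from anything in the paper.
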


Combining \ref{lem:excellent}--\ref{thm:slice} we obtain

\begin{cor}\label{cor:slicereps}
Suppose that  $X$  is smooth. Let $(W,H)$ be a slice representation of $X$.
 \begin{enumerate}
\item If $X$ is very good (resp.\ good or $D$-finite) then so is $W$.
\item If $W$ is very good (resp.\ good or $D$-finite)  for each slice representation $(W,H)$ of $X$, then $X$ is very good (resp.\ good or $D$-finite).
\end{enumerate}
\end{cor}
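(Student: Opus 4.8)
The plan is to obtain both statements by threading the three structural results \ref{lem:excellent}, \ref{lem:slices}, and the two parts of \ref{cor:D-finite} along the chain of maps supplied by Luna's slice theorem \ref{thm:slice}. Writing $N=W$ for the slice representation at a point $x$ on a closed orbit, with slice $S$, neighborhood $U=G\cdot S$, and the function $f\in\O(N)^H$ with $f(0)\neq 0$, this chain links the objects
\[
W=N,\qquad N_f=W_f,\qquad S,\qquad G\times^H S,\qquad U,\qquad X,
\]
via: the open immersion $U\hookrightarrow X$, which is excellent, since a $G$-saturated open immersion satisfies the three axioms—condition (3) is exactly $G$-saturation, as $X\times_{\quot XG}\quot UG=p\inv(p(U))=U$; the excellent map $\phi\colon G\times^H S\to U$ of \ref{thm:slice}(1); the identity $\O((G\times^H S)_m)^{G_m}\simeq\O(S_m)^{H_m}$ of \ref{lem:slices}; the excellent surjection $\psi\colon S\to N_f$ of \ref{thm:slice}(2); and the equivalence ``$W$ very good $\Leftrightarrow W_f$ very good'' of \ref{cor:D-finite}(2). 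Each of the three properties (very good, good, $D$-finite) is transported identically by these results, so I treat all three at once.

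For part (1), assume $X$ is very good and read the chain from right to left. By \ref{lem:excellent} applied to $U\hookrightarrow X$, $U$ is very good; applied to $\phi$, $G\times^H S$ is very good; by \ref{lem:slices}, $S$ is very good; applied to $\psi$—here invoking the \emph{converse} half of \ref{lem:excellent}, which needs that $\psi$ is surjective—$N_f$ is very good; and finally \ref{cor:D-finite}(2) gives that $W=N$ is very good. The only step requiring surjectivity is the passage across $\psi$, and \ref{thm:slice}(2) supplies it.

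For part (2), assume every slice representation of $X$ is very good and run the chain in the opposite direction for each closed orbit. First I need a finite $G$-saturated open cover of $X$ by Luna neighborhoods. Since every fiber of $p\colon X\to\quot XG$ contains a unique closed orbit, the saturated opens $U_x$ attached to closed orbits $Gx$ cover $X$; by quasi-compactness finitely many $U_1,\dots,U_k$ suffice, each $U_i=G\cdot S_i$ being $G$-saturated and open. Fix $i$. By \ref{cor:D-finite}(2), $N_{i,f}=W_{i,f}$ is very good; by \ref{lem:excellent} applied to $\psi_i$, $S_i$ is very good; by \ref{lem:slices}, $G\times^H S_i$ is very good; and by \ref{lem:excellent} applied to the excellent surjection $\phi_i\colon G\times^H S_i\to U_i$—again using the surjectivity half, with surjectivity coming from $U_i=G\cdot S_i$—$U_i$ is very good. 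Then \ref{cor:D-finite}(1) applied to $X=\bigcup_i U_i$ yields that $X$ is very good.

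I expect the one genuinely nonformal point to be the construction of the finite cover in part (2): one must know that the points lying on closed orbits exhaust $\quot XG$, so that the neighborhoods $U_x$ cover $X$, and then extract a finite subcover by noetherianity. Everything else is bookkeeping of which half of \ref{lem:excellent} applies; the only real constraint is that the two steps across the surjections $\psi$ and $\phi$ must use the surjectivity hypothesis, which \ref{thm:slice}(2) and the definition $U=G\cdot S$ respectively guarantee.
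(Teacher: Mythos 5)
Your proof is correct and is exactly the argument the paper intends: the paper gives no written proof, simply asserting that the corollary follows by ``combining'' Lemma \ref{lem:excellent} through Theorem \ref{thm:slice}, and your chain $W \leftrightarrow W_f \leftrightarrow S \leftrightarrow G\times^H S \leftrightarrow U \leftrightarrow X$ (with the surjectivity half of Lemma \ref{lem:excellent} invoked precisely at $\psi$ and $\phi$, and the finite saturated cover fed into Corollary \ref{cor:D-finite}(1)) is the correct way to fill that in.
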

 
From Lemma \ref{lem:excellent} we obtain 
 
\begin{cor}\label{cor:localization}
Suppose that $X$ is smooth and that $f\in\cO(X)^G$.
\begin{enumerate}
\item If the slice representations of $X_f$ are very good, then 
$$
(\cO(X_m)^{G_m})_f\simeq \cO((\quot XG)_m)_f.
$$
\item If the slice representations of $X_f$ are good, then 
$$
(\cO(X_m)^{G_m})_f\simeq (p_m^*\cO((\quot XG)_m))_f.
$$
\end{enumerate}
\end{cor}

Our main focus in this paper will be on the case where $X$ is a finite-dimensional $G$-module $V$. Choose a basis $\{x_1,\dots,x_n\}$ for $V^*$, so that $$\cO(V) \cong  \mathbb{C}[x_1,\dots,x_n], \qquad \cO(V_m) =  \mathbb{C}[x_1^{(i)},\dots,x_n^{(i)}], \qquad 0\leq i\leq m.$$ The action of $G_m$ on $V_m$ induces the following action of its Lie algebra $\gg[t]/t^{m+1}$ on $\cO(V_m)$. For $\xi\in \gg$, \begin{equation*}\label{jetaction}\xi t^r (x_j^{(i)}) = \lambda^r_i (\xi(x_j))^{(i-r)}, \qquad \lambda^r_i = \bigg\{ \begin{matrix} \frac{i!}{(i-r)!}  & 0\leq r\leq i \cr 0 & r>i \end{matrix}.\end{equation*} The invariant ring  $\cO(V_m)^{\gg[t]/t^{m+1}}$ coincides with $\cO(V_m)^{G_m}$ when $G$ is connected.

 \begin{lemma} \label{directsum}
 Suppose that $V\oplus W$ is a representation of $G$. If $W$ is bad (resp.\ not $D$-finite) then so is $V\oplus W$. 
 \end{lemma}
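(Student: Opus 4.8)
The plan is to exploit the fact that a direct sum comes with a $G$-equivariant projection and inclusion that split one another, and to show that this splitting is inherited by the jet schemes, their quotients, and the invariant rings, so that $W$ becomes a retract of $V\oplus W$ at every level. Concretely, write $\pi\colon V\oplus W\to W$ for the projection and $\iota\colon W\to V\oplus W$ for the inclusion $w\mapsto (0,w)$; both are $G$-morphisms and $\pi\circ\iota=\id_W$. By functoriality of the jet construction these induce $\pi_m\colon (V\oplus W)_m\to W_m$ and $\iota_m\colon W_m\to(V\oplus W)_m$ with $\pi_m\circ\iota_m=\id_{W_m}$, and by functoriality of the categorical quotient they induce $(\quot\pi G)_m$ and $(\quot\iota G)_m$ on the jets of the quotients, again with $(\quot\pi G)_m\circ(\quot\iota G)_m=\id$. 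Dualizing, the pullbacks $\iota_m^*$ and $\pi_m^*$ restrict to $G_m$-invariants (since $\iota_m,\pi_m$ are $G_m$-equivariant), so $\iota_m^*\colon\cO((V\oplus W)_m)^{G_m}\to\cO(W_m)^{G_m}$ is a split surjection with section $\pi_m^*$, and likewise $(\quot\iota G)_m^*$ is a split surjection $\cO((\quot{(V\oplus W)}G)_m)\to\cO((\quot WG)_m)$.

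First I would handle the ``bad'' assertion by proving the contrapositive: if $V\oplus W$ is good then so is $W$. Naturality of the formation of the quotient map (functoriality of $X\mapsto p_X$) gives, after applying jets and dualizing the square $p_{V\oplus W}\circ\iota=(\quot\iota G)\circ p_W$, the commuting identity $\iota_m^*\circ p_{V\oplus W,m}^*=p_{W,m}^*\circ(\quot\iota G)_m^*$. Now take any $b\in\cO(W_m)^{G_m}$. Assuming $p_{V\oplus W,m}^*$ is surjective, choose $a'$ with $p_{V\oplus W,m}^*(a')=\pi_m^*(b)$; applying $\iota_m^*$ and using $\iota_m^*\circ\pi_m^*=\id$ together with the commuting identity gives $b=p_{W,m}^*\big((\quot\iota G)_m^*(a')\big)$. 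Hence $p_{W,m}^*$ is surjective, i.e.\ $W$ is good. This works verbatim for finite $m$ and for $m=\infty$.

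For the $D$-finite assertion I would again argue the contrapositive, showing that $V\oplus W$ being $D$-finite forces $W$ to be $D$-finite. The point is that $\iota_\infty^*\colon\cO((V\oplus W)_\infty)^{G_\infty}\to\cO(W_\infty)^{G_\infty}$ is a surjective homomorphism of differential algebras: surjectivity comes from the section $\pi_\infty^*$, and compatibility with the canonical derivation $D$ holds because $D$ is natural with respect to morphisms of arc spaces. A surjective differential-algebra homomorphism carries a differential generating set to a differential generating set, so if the source is finitely generated as a differential algebra then so is its image $\cO(W_\infty)^{G_\infty}$.

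The argument is essentially formal, so there is no deep obstacle; the only points requiring care are that $\iota_m^*$ and $\pi_m^*$ genuinely preserve the invariant subrings and commute with $D$, and that the relevant naturality squares commute. All of these are instances of the functoriality of the jet/arc construction and of the categorical quotient already used in Lemma \ref{lem:excellent}, so I expect the write-up to be short, with the main (modest) effort going into recording the naturality squares carefully.
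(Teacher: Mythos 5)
Your proof is correct and is essentially the paper's argument: both rest on restricting along the $G$-equivariant splitting $W\hookrightarrow V\oplus W\twoheadrightarrow W$, which is exactly what the paper's ``clearly it cannot come from an element of $\cO((\quot{(V\oplus W)}G)_m)$'' and its bihomogeneity observation for the $D$-finite case amount to. Your retraction formulation just makes the paper's implicit naturality squares explicit.
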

 
 \begin{proof}
 If $W$ is bad there is a $G_m$- invariant polynomial on $W_m$ which does not come from $\O((\quot WG)_m)$. Then clearly it cannot come from an element of $\O(\quot {((V\oplus W)}G)_m)$. Now minimal generators of $\cO(V_\infty\oplus W_\infty)^{G_\infty}$ can clearly be chosen to be bihomogeneous in the variables of $V_\infty$ and $W_\infty$. Thus if $V\oplus W$ is $D$-finite, then so is $W$.
 \end{proof}
 
The results above say  that a representation is bad (resp.\ not $D$-finite) if a subrepresentation or slice representation is bad (resp.\ not $D$-finite). Now let us consider some examples.

 \begin{example}\label{ex:pm1}
Let $(V,G)=(\C,\pm 1)$. Then $V$ is bad. Let $z$ be a coordinate function on $V$. Then $V_1$ has coordinates $z=z^{(0)}$ and $z^{(1)}$. The invariants of $G=G_1$ are generated by $z^2$, $zz^{(1)}$ and $(z^{(1)})^2$. The invariants coming from the quotient are $z^2$ and $2zz^{(1)}$. If one goes to degree $2$, then from $\C[z^2]$ we get $z^2$, $2zz^{(1)}$ and $2(z^{(1)})^2+2zz^{(2)}$. But among the $G_2$-invariants we have $z^2$, $zz^{(1)}$, $zz^{(2)}$, $(z^{(1)})^2$, $z^{(1)}z^{(2)}$ and $(z^{(2)})^2$. Things are only getting worse. See Theorem \ref{thm:badfinite} for the general case. \end{example}
  
 \begin{example} \label{ex:torus}Let $G = \C^*$ and let $V = \C^2$ with weights $2$ and $-3$. Then $\cO(V)^G$ is generated by $z=x^3y^2$, so that $\quot VG \cong  \C$. For $m=1$,
$w:=(D(z))^2 /z=x(3y  x^{(1)} + 2 x  y^{(1)})^2$ is not a function on $(\quot VG)_1$, but it is a $G_1$-invariant function on $V_1$. Hence $V$ is $1$-bad. In fact, it is $m$-bad for any $m\geq 1$.
See Theorem \ref{thm:dim1} for a generalization. Computer calculations suggest that $V$ is $D$-finite with generators $z$ and $w$. Thus this is likely an example where $V$ is bad yet $D$-finite.
 \end{example}

\begin{example} \label{sixcopies} 
Let $G = \SL_3$ and let $V$ be the direct sum of $6$ copies of the standard representation $\C^3$, with basis $\{x^{(a,0)}, y^{(a,0)}, z^{(a,0)}| \ a= 1,\dots,6\}$. The generators of $\cO(V)^G$ are $3\times 3$ determinants $[abc]$ corresponding to a choice of three distinct indices $a,b,c\in \{1,\dots, 6\}$. Let $x^{(a,1)} = D x^{(a,0)}$ and similarly for $y$ and $z$, and define 
$$
f = \sum_{\sigma\in \mathfrak{S}_6} \text{sgn}(\sigma) x^{(\sigma(1),0)} y^{(\sigma(2),0)}z^{(\sigma(3),0)} x^{(\sigma(4),1)} y^{(\sigma(5),1)} z^{(\sigma(6),1)} ,
$$ 
where $\sigma$ runs over the group $\mathfrak{S}_6$ of permutations of $\{1,\dots, 6\}$. Note that $f$ has degree $6$ and weight $3$ and lies in $\cO(V_1)^{G_1}$, but $f\notin p_1^*\cO((\quot VG)_1)$, since elements of $p_1^*\cO((\quot VG)_1)$ of degree $6$ can have weight at most $2$. Hence $V$ is $1$-bad, and in fact it is $m$-bad for all finite $m\geq 1$. However, $f$ can be expressed (up to a constant multiple) in the form 
$$
\sum_{\sigma\in \mathfrak{S}_6} [\sigma(1) \sigma(2) \sigma(3)] ([\sigma(4) \sigma(5) \sigma(6)])^{(3)},
$$ 
so $f\in p_3^*\cO((\quot VG)_3)$.  In Theorem \ref{thm:sln} and Example \ref{ex:sl3} below we show  that $p_{\infty}^*$ is surjective but not injective.
\end{example}

\begin{remark} The surjectivity of $p^*_{\infty}$ is equivalent to the condition that every element of $\cO(V_{\infty})^{G_{\infty}}$ of weight $m$ lies in $p_m^* \cO((\quot VG)_{m})$. \end{remark}

 \begin{thm}\label{thm:badfinite}
Suppose that $G\subset\GL(V)$ is finite and nontrivial. Then $V$ is $m$-bad for any $m\geq 1$ and $V$ is not $D$-finite.
\end{thm}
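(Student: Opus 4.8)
The plan is to first pin down the group action, then treat the finite-$m$ and the $D$-finiteness assertions by two closely related weight–degree estimates. Since $G$ is finite, $\gg=0$, so $G_m=G$ for every $m$ (the unipotent part of $G_m$ has trivial Lie algebra, hence is trivial), and $G$ acts \emph{diagonally} on the levels of $V_m$: in coordinates $g\cdot x_j^{(i)}=(g\cdot x_j)^{(i)}$. Thus $\cO(V_m)^{G_m}=\cO(V_m)^G$ is the ring of diagonal invariants of $G$ on the $m+1$ copies of $V$ indexed by the levels $0,\dots,m$; it carries the bigrading by degree and by weight $\wt(x_j^{(i)})=i$, both preserved by $G$, and $D$ has bidegree $(0,1)$. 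I would first reduce to the case $V^G=0$: writing $V=V^G\oplus W$ with $W$ the sum of the nontrivial isotypic components, $G$ acts nontrivially on $W$ with $W^G=0$, so by Lemma~\ref{directsum} it is enough to show that $W$ is bad and not $D$-finite. Note $V^G=0$ forces $(V^*)^G=0$, so there are no invariant linear forms and the least degree $d_1$ of a nonconstant element of $\cO(V)^G$ satisfies $d_1\ge2$.

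For the finite-$m$ statement I would exploit that $\im p_m^*$ has bounded weight in each fixed degree while $\cO(V_m)^G$ does not. Concretely, $\im p_m^*$ is the subalgebra of $\cO(V_m)$ generated by $\{D^a u : u\in\cO(V)^G,\ 0\le a\le m\}$, and $D^a u$ is bihomogeneous of weight exactly $a$. Since every nonconstant invariant has degree $\ge d_1$, any product of such generators of total degree $d_1$ is a single factor $D^a u$ with $\deg u=d_1$, hence has weight $a\le m$; so $\im p_m^*$ contains nothing of degree $d_1$ and weight $>m$. On the other hand, taking a nonzero $u\in\cO(V)^G$ of degree $d_1$ and substituting $x_j\mapsto x_j^{(m)}$ gives an element $u(x^{(m)})\in\cO(V_m)^G$ of degree $d_1$ and weight $d_1m>m$ (using $d_1\ge2$). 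Hence $u(x^{(m)})\notin\im p_m^*$ and $V$ is $m$-bad for every finite $m\ge1$.

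For non-$D$-finiteness — which also yields the case $m=\infty$, since surjectivity of $p_\infty^*$ would force $D$-finiteness — I would argue by a growth estimate in the minimal degree. Let $d_0\ge2$ be the least degree of a nonconstant element of $A:=\cO(V_\infty)^G$. Taking invariants is exact in characteristic $0$, so from any nonzero invariant in $A_{d_0,\bullet}$ (lying in some $(\bigotimes_i\operatorname{Sym}^{m_i}V^*)^G$) one deduces $(V^{*\otimes d_0})^G\neq0$ via the symmetrization surjection $V^{*\otimes d_0}\twoheadrightarrow\bigotimes_i\operatorname{Sym}^{m_i}V^*$. Fixing $0\neq T\in(V^{*\otimes d_0})^G$, for each choice of distinct levels $i_1<\dots<i_{d_0}$ with $\sum_s i_s=w$ I would place the $d_0$ tensor factors of $T$ at these levels, using the $G$-equivariant map $V^{*\otimes d_0}\to\operatorname{Sym}^{d_0}(V^*\otimes\C[t])$, obtaining a nonzero degree-$d_0$, weight-$w$ invariant; distinct level-sets land in distinct multidegree components, so these invariants are linearly independent. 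Their number grows without bound with $w$, whence $\dim A_{d_0,w}\to\infty$. But if $A$ were generated as a differential algebra by finitely many bihomogeneous elements (necessarily of degree $\ge d_0$), its degree-$d_0$ part would be spanned by the single generators $D^ah_i$ with $\deg h_i=d_0$, giving $\dim A_{d_0,w}\le\#\{i:\deg h_i=d_0\}$ independently of $w$ — a contradiction.

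The main obstacle, and the reason one cannot simply imitate Example~\ref{ex:pm1}, is that the minimal invariant degree $d_0$ may exceed $2$ and, worse, the degree-$d_0$ invariants need not be symmetric: in the symplectic case $(\operatorname{Sym}^{d_0}V^*)^G$ can vanish even though $(V^{*\otimes d_0})^G\neq0$. This is precisely why I would phrase the construction in terms of the tensor invariant $(V^{*\otimes d_0})^G$ together with placement of its factors at distinct levels (respectively, for the finite-$m$ bound, all factors at the repeated top level $m$), rather than in terms of $\operatorname{Sym}$. The steps requiring genuine care are establishing $(V^{*\otimes d_0})^G\neq0$, checking that the distinct-level placements produce linearly independent nonzero invariants, and verifying the exact weight bound on $\im p_m^*$; the rest is bookkeeping with the bigrading.
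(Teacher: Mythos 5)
Your proof is correct, and its two pillars --- (degree, weight) bookkeeping against the minimal degree of an invariant for the finite-$m$ claim, and polarization of a minimal-degree invariant onto distinct levels for non-$D$-finiteness --- are the same ones the paper uses; the differences are in execution, and they work in your favor. For finite $m$ the paper places the minimal-degree generators on the level-$1$ copy of $V$ inside $V_1$ and argues they cannot be recovered because the candidates $D^kf_i$ involve variables of level $>1$; you place the invariant at level $m$ and use the blunter weight bound (degree-$d_1$ elements of $\im p_m^*$ have weight $\le m$, while $u(x^{(m)})$ has weight $d_1m>m$). Your explicit reduction to $V^G=0$ via Lemma~\ref{directsum} is a genuine improvement: both arguments need the minimal invariant degree to be at least $2$, and the paper never secures this (for $\pm1$ acting on $\C^2$ fixing a line, $x$ is a degree-one generator and $Dx=x^{(1)}$ exactly, so the paper's term-by-term comparison degenerates). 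For non-$D$-finiteness the paper likewise polarizes a minimal-degree invariant onto levels $rm+1,\dots,rm+m$ but concludes with ``it is easy to see that such a sum can never give $f$''; your count $\dim A_{d_0,w}\to\infty$ against the uniform bound $\#\{i:\deg h_i=d_0\}$ makes that step airtight, and as you note it also delivers $\infty$-badness. The only detail I would insist you write out fully is the nonvanishing and linear independence of the placed tensor invariants via the level multigrading, which you have already flagged as the step requiring care.
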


\begin{proof} Note that $G_m=G$ for all $m$. Let $k>0$ be minimal such that there is a homogeneous generator of $\O(V)^G$ of degree $k$.
Let $f_1,\dots,f_\ell$ be a basis of the generators of degree $k$. We have $V_1\simeq TV=V\oplus V'$ where $V'$ is a $G$-module isomorphic to $V$. Using the isomorphism we obtain   minimal generators $f_1',\dots,f_\ell'$ of $\O(V')^G$ which are linearly independent. The $f_i'$ exist in every $\O(V_m)^G$ for $m\geq 1$. They are not in $p_m^*\O((V/G)_m)$  because the only possibility is that $f_i'=D^kf_i$ for all $i$ where the latter have  terms involving the variables of $V_k$ not in $V_1$. Thus $V$ is $m$-bad for all $m\geq 1$.

Let $f$ be a homogeneous element of $\cO(V_\infty)^G$ of minimal positive degree, say $m$. Let $1\leq i_1<\dots<i_m\leq s$. Then there is a polarization  $f_{i_1,\dots,i_m}$ which is multilinear and invariant on the copies of $V$ in $V_s$ corresponding to the indices $i_1,\dots,i_m$. Now consider $i_j=rm+j$ for $r\geq 1$. If $f:=f_{i_1,\dots,i_m}$ is in the differential subalgebra of $\cO(V_\infty)^G$ generated by $\cO(V_{rm})^G$, then we must have that $f$ is a sum of $D$ to some powers applied to   invariants of degree $m$ lying in $\cO(V_{rm})^G$. But it is easy to see that such a sum can never give $f$. 
  \end{proof}

\begin{cor} If $X$ is a smooth affine $G$-variety and $(W,H)$ is a slice representation of $X$ with $H$ finite and acting nontrivially on $W$, then $X$ is bad and not $D$-finite.
\end{cor}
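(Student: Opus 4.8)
The plan is to deduce this corollary directly from the machinery already assembled, reducing the statement about $X$ to the statement about the slice representation $(W,H)$ and then invoking Theorem \ref{thm:badfinite}. First I would observe that since $H$ is finite and nontrivial, Theorem \ref{thm:badfinite} applies verbatim to the $H$-module $W$: it tells us that $W$ is $m$-bad for all $m\geq 1$ (in particular bad, i.e.\ $\infty$-bad) and that $W$ is not $D$-finite. So the entire content of the corollary is to transfer these two failure properties from $W$ back up to $X$.

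The transfer is exactly the contrapositive direction of Corollary \ref{cor:slicereps}(2). That corollary says that if $X$ is smooth and \emph{every} slice representation of $X$ is very good (resp.\ good or $D$-finite), then $X$ is very good (resp.\ good or $D$-finite). Reading this contrapositively: if some single slice representation fails to be good (resp.\ fails to be $D$-finite), then $X$ cannot be good (resp.\ cannot be $D$-finite), since otherwise Corollary \ref{cor:slicereps}(1) would force that slice representation to be good (resp.\ $D$-finite). Thus I would argue as follows. Since $X$ is smooth and $(W,H)$ is a slice representation, Corollary \ref{cor:slicereps}(1) guarantees that \emph{if} $X$ were good (resp.\ $D$-finite) then $W$ would be good (resp.\ $D$-finite). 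But Theorem \ref{thm:badfinite} shows $W$ is bad and not $D$-finite. This contradiction shows $X$ is bad and not $D$-finite, which is the claim.

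I expect there to be essentially no obstacle here, because all the real work has been front-loaded into Lemma \ref{lem:excellent}, Theorem \ref{thm:slice}, and Corollary \ref{cor:slicereps}; the corollary is a clean application of the ``a representation is bad if a slice representation is bad'' principle emphasized in the discussion after Lemma \ref{directsum}. The only point requiring a word of care is that Theorem \ref{thm:badfinite} is stated for a finite subgroup $G\subset\GL(V)$, and here the relevant finite group is $H$ acting on $W$; one simply renames $(G,V)$ as $(H,W)$ in its application, which is legitimate since $H$ is reductive (being finite) and the theorem's hypotheses—$H$ finite and nontrivial—are precisely what we are given. A second minor point is that ``bad'' for $X$ should be read as $\infty$-bad (the default when $m$ is suppressed), and this is exactly what Corollary \ref{cor:slicereps}(1) delivers, since that corollary covers the case $m=\infty$ along with finite $m$.
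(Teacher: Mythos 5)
Your proof is correct and is exactly the intended argument: the paper states this corollary without proof precisely because it follows immediately from Theorem \ref{thm:badfinite} applied to $(W,H)$ together with Corollary \ref{cor:slicereps}(1). The only blemish is that you initially cite the contrapositive of part (2) of Corollary \ref{cor:slicereps} when the relevant statement is the contrapositive of part (1) (``if $X$ is good, resp.\ $D$-finite, then so is every slice representation''), but your actual argument invokes part (1) correctly, so nothing is affected.
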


We need some more background on the action of $G$, see \cite{LunaSlice}. 
 The points of $\quot VG$ are in one to one correspondence with the closed orbits $Gv$, $v\in V$. Let $H:=G_v$ be the isotropy group (which is reductive) and let  $(W,H)$ be the slice representation.  The fiber $p\inv(p(v))$ is isomorphic to $G\times^H\NN(W)$ where $\NN(W):=p_W\inv(p_W(0))$ is the \emph{null cone\/} of $W$. For the rest of this section, we set $Z:= \quot VG$. Let $(H)$ denote the conjugacy class of $H$ in $G$ and let $Z_{(H)}$ denote the closed orbits $Gv'$ such that $G_{v'}\in (H)$. Then there are finitely many strata $Z_{(H)}$ each of which is smooth and irreducible. For reductive subgroups $H_1$ and $H_2$ of $G$, write $(H_1)\leq (H_2)$ if $H_1$ is $G$-conjugate to a subgroup of $H_2$. Then among the isotropy classes of closed orbits, there is a unique minimum $(H)$ with respect to $\leq$, called the \emph{principal isotropy class\/}. We also call $H$ a \emph{principal isotropy group} and corresponding closed orbits are called \emph{principal orbits}. Then  $Z_{(H)}$ is the unique open stratum in $Z$ and we also denote it by $Z_\pr$. Let $Gv$ be a principal orbit with $G_v=H$. Then the fiber of $p$ through $v$ is  of the form $G\times^HW$ where $W$ is the nontrivial part of the slice representation of $H$ at $v$ and $\OO(W)^H=\C$. We say that a $G$-module is {\it stable} if the general $G$-orbit is closed. Then the slice representations   of the principal isotropy groups are trivial and   $V_\pr:=p\inv(Z_\pr)$ is open and  consists of principal orbits.

Let $S$ be an irreducible hypersurface in $Z$. Then the ideal of $S$ is generated by an invariant $f$. Write $f=f_1^{a_1}\dots f_n^{a_n}$ where the $f_i$ are irreducible polynomials in $\OO(V)$. We say that the irreducible component $\{f_i=0\}$ of $p\inv(S)$ is \emph{schematically reduced\/} if $a_i=1$. Equivalently, the differential $df$ does not vanish at some point of $\{f_i=0\}$. We say that $p\inv(S)$ is schematically reduced if all of its irreducible components are. Equivalently, $f$ generates the ideal  of $p\inv(S)$ in $\OO(V)$.  The \emph{codimension one strata\/} of $V$ are the inverse images in $V$ of the codimension one strata of $Z$.

\begin{thm}\label{thm:dim1}
Let $V$ be a  $G$-module such that $\dim Z=1$. Then $Z\simeq\C$.
\begin{enumerate}
\item If $\O(V_m)^{G_m}=p_m^*\O(\C_m)$ for some $m\geq 1$, then an irreducible component of $\NN(V)$ is schematically reduced.
\item If  an irreducible component of $\NN(V)$ is schematically reduced, then $\O(V_m)^{G_m}=p_m^*\O(\C_m)$ for all $m\geq 1$.
\end{enumerate}
\end{thm}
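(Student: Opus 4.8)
The plan is to first determine $Z$ and translate the hypothesis into algebra, then settle the ``bad'' direction (1) by writing down an explicit invariant, and finally prove the ``good'' direction (2) by a localize–and–saturate argument in which schematic reducedness is used exactly once. Since $G$ is reductive, $\cO(V)^G$ is a finitely generated normal graded domain with degree-$0$ part $\C$; being one-dimensional and normal it is regular, so $\cO(V)^G=\C[z]$ for a single homogeneous generator $z$ and $Z\cong\C$. Hence $p_m^*\cO(\C_m)$ is the differential subalgebra $R_m:=\C[z^{(0)},\dots,z^{(m)}]\subseteq S_m:=\cO(V_m)^{G_m}$ with $z^{(i)}=D^iz$, and $\NN(V)=\{z=0\}$. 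Writing $z=f_1^{a_1}\cdots f_r^{a_r}$ with the $f_i$ irreducible, the component $\{f_i=0\}$ is schematically reduced iff $a_i=1$. I record a reduction used below: if $S_m=R_m$ for some finite $m$, then $S_1=R_1$. Indeed $z^{(i)}=D^iz$ is the unique generator of $R_m$ involving the top jet variables $x_j^{(i)}$, and it does so linearly with coefficients $\partial z/\partial x_j$; so any $w\in S_1\subseteq S_m=R_m$ (which involves only $x_j^{(0)},x_j^{(1)}$) can involve no $z^{(i)}$ with $i\ge2$, whence $w\in\C[z^{(0)},z^{(1)}]=R_1$. Contrapositively, $V$ being $1$-bad forces $V$ to be $m$-bad for all $m$.

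For (1), assume every $a_i\ge2$; I claim $V$ is already $1$-bad. Each $\partial z/\partial x_j$ is divisible by $\prod_i f_i^{a_i-1}$, so $Dz=\big(\prod_i f_i^{a_i-1}\big)H$ with $H=\sum_j h_j x_j^{(1)}$ a nonzero polynomial. Then $(Dz)^2/z=\big(\prod_i f_i^{a_i-2}\big)H^2$ is a genuine polynomial (all $a_i\ge2$), homogeneous of weight $2$, and it is $G_1$-invariant, being the ratio of the $G_1$-invariants $(Dz)^2$ and $z$. It cannot lie in $R_1$: its weight-$2$ component would have to equal $g(z)(Dz)^2$ for some $g\in\C[z]$, forcing $1/z=g(z)$, which is absurd. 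Thus $V$ is $1$-bad, hence $m$-bad for all $m$, which is the contrapositive of (1).

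For (2), assume $a_1=1$ and prove $S_m=R_m$ by combining a generic comparison with a saturation at $z=0$. The grading $\C^*$-action on $Z\cong\C$ fixes only $0$, so the Luna stratification of $Z$ consists only of $\{0\}$ and $\C^*=Z_{\mathrm{pr}}$, and every slice representation occurring along $V_z=\{z\neq0\}$ is (conjugate to) the principal one $(\C\oplus W_{\mathrm{pr}},H)$ with $\cO(W_{\mathrm{pr}})^H=\C$. A representation with one-point quotient is good (its jet schemes carry no nonconstant invariants), and splitting off the fixed $\C$ shows the principal slice is good; Corollary \ref{cor:localization} then gives $(S_m)_z=(R_m)_z$ inside $\cO(V_m)_z$. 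Next, write $z=f_1u$ with $u=\prod_{i\ge2}f_i^{a_i}$ not divisible by $f_1$. Because $a_1=1$, the hypersurface $C_1:=\{f_1(x^{(0)})=0\}\subseteq V_m$ is a $G$-stable, reduced component of $\{z=0\}$, and on it $z^{(k)}|_{C_1}=u(x^{(0)})\,f_1^{(k)}+(\text{terms involving only jets }x^{(<k)})$ with $u(x^{(0)})\not\equiv0$ on $C_1$; since each $z^{(k)}|_{C_1}$ introduces the new top jet $x^{(k)}$, the functions $z^{(1)}|_{C_1},\dots,z^{(m)}|_{C_1}$ are algebraically independent. Consequently, if $w\in S_m$ and $zw=P(z^{(0)},\dots,z^{(m)})\in R_m$, then restricting to $C_1$ (where $z^{(0)}=z$ vanishes) forces $P\bmod z^{(0)}$ to vanish as a polynomial in the independent $z^{(k)}|_{C_1}$, so $z^{(0)}\mid P$ and $w=P/z^{(0)}\in R_m$. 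Thus $S_m/R_m$ has no $z$-torsion, and with $(S_m)_z=(R_m)_z$ this yields $S_m=R_m$.

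The main obstacle is this final saturation step: it is the only place the hypothesis $a_1=1$ is used, and the formula for $z^{(k)}|_{C_1}$ shows exactly why it is needed — were all $a_i\ge2$, then $z^{(k)}|_{C_1}\equiv0$, the independence (hence the torsion-freeness) would fail, and one recovers the bad behavior of (1). A secondary point demanding care, needed only for the generic comparison over $\C^*$, is the auxiliary fact that a representation with one-point quotient is good for all $m$; since it concerns the non-reductive jet groups $H_m$, I would isolate it as a lemma and prove it from the weight grading together with the Hilbert–Mumford description $\NN(W_{\mathrm{pr}})=W_{\mathrm{pr}}$.
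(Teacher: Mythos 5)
Your proof is correct, and for part (2) it takes a genuinely different route from the paper's. Part (1) is essentially the paper's argument: both exhibit $(Dz)^2/z$ as a $G_1$-invariant polynomial not pulled back from $\C_m$; your reduction from ``$m$-good implies $1$-good'' (a polynomial in $z^{(0)},\dots,z^{(m)}$ lying in $\O(V_1)$ cannot involve $z^{(i)}$ for $i\ge 2$) is a clean way to make the non-membership precise for every finite $m$. For part (2) the paper argues geometrically: schematic reducedness of a component of $\NN(V)$ means $dz$ is nonzero somewhere on $p\inv(0)$, so $V'=\{dz\neq 0\}$ maps \emph{onto} $\C$ and $p_m\colon V'_m\to\C_m$ is a smooth surjection; an invariant $h$ descends to a rational function $\tilde h$ on $\C_m$ (using the description of principal fibers, including the one-point-quotient lemma in the unstable case), and surjectivity forbids $\tilde h$ from having poles. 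You instead establish $(S_m)_z=(R_m)_z$ via Corollary \ref{cor:localization} and then kill the difference by showing $S_m/R_m$ has no $z$-torsion, which you deduce from the algebraic independence of $z^{(1)}|_{C_1},\dots,z^{(m)}|_{C_1}$ on the reduced component $C_1=\{f_1=0\}$. Both proofs rest on the same slice-theoretic input over $\C^*$ (principal fibers $G\times^HW$ with $\O(W_m)^{H_m}=\C$, which you rightly isolate as a lemma --- it is exactly the Hilbert--Mumford step the paper carries out inline). Your version localizes very explicitly where $a_1=1$ enters (were all $a_i\ge2$, the restrictions $z^{(k)}|_{C_1}$ would vanish and torsion-freeness would fail, recovering (1)); the paper's version buys brevity, replacing your saturation computation by the observation that a rational function whose pullback under a surjection is regular can have no poles. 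Two minor points: the $G$-stability of $C_1$ you assert can fail for disconnected $G$ permuting the $f_i$, but you never actually use it; and in the cancellation $w=P/z^{(0)}$ it suffices that $\O(V_m)$ is a domain, so no independence of the $z^{(i)}$ themselves is needed there.
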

  
\begin{proof}  Since $Z$ is normal of dimension one, we have $Z\simeq \C$ and $\OO(V)^G$ is generated by a homogeneous invariant $p$. Write $p=p_1^{a_1}\dots p_n^{a_n}$ where the $p_i$ are irreducible polynomials in $\OO(V)$. Suppose that no irreducible component of $\NN(V)$ is schematically reduced. Then $a_i\geq 2$ for all $i$ and $(Dp)^2$ is divisible by $p$, yet $(Dp)^2/p$ is not the pullback of an element of $\O(\C_m)$ and we have (1).

  Now suppose that an irreducible component of $\NN(V)$ is schematically reduced and that $V$ is stable.
  Let $V'=\{v\in V\mid dp(v)\neq 0\}$. Then $V'$ is $G$-stable, open and dense in $V$. Since $dp$ does not vanish somewhere on $\NN(V)$, $p$ is a smooth mapping of $V'$ onto $\C$.  Hence $p_m\colon   V'_m\to\C_m$ is smooth and surjective. The principal fibers of $p\colon V\to\C$ are homogeneous spaces $G/H$ where $H$ is reductive. Since $G\to G/H$ is a principal $H$-bundle, $G_m\to (G/H)_m$ is a principal $H_m$-bundle and $(G/H)_m\simeq G_m/H_m$.  It follows that the fibers of $p_m$ in $(V_\pr)_m$ are homogeneous spaces $G_m/H_m$. Hence any $h\in\O(V_m)^{G_m}$ is the pullback of a rational function $\tilde h$ on $\C_m$. If $\tilde h$ has poles, then so does $p_m^*\tilde h=h$. Hence $\tilde h$ is in $\O(\C_m)$ and we have proved (2) in case $V$ is stable.
  
    Now suppose that $V$ is not stable. Then the principal fibers are $G\times^HW$ where $\O(W)^H=\C$.  Since $\O(W)^H=\C$, by Hilbert-Mumford there  is a 1-parameter subgroup  $\lambda\colon \C^*\to H$ such that $HW_\lambda=W$ where $W_\lambda$ is the sum of the strictly  positive weight spaces of $\lambda$. There is a dense open subset $W'_\lambda$ of $W_\lambda$ such that $H\times W_\lambda'\to W'\subset W$ is surjective and smooth where $W'$ is open in $W$. Then $H_m\times W'_{\lambda,m}\to W'_m$ is surjective and smooth where $\lambda$ has only positive weights on $W'_{\lambda,m}:=(W'_\lambda)_m$. Hence the $H_m$-invariants of $W_m$ are just the constants. It follows that the $G_m$-invariants on $G_m\times^{H_m}W_m$ are constants and the proof above goes through.
\end{proof}

\begin{remark}  \label{rem:goodandverygood} Whenever $Z_m$ is irreducible and reduced, $p_m^*$ is injective since $p_m$ is dominant.  Hence good and very good are equivalent in this case.
\end{remark}
 
\begin{cor}[Eck]
Let $V$ be a stable $G$-module with $\dim Z=1$. Assume that the generating invariant $p$ is irreducible. Then $V$ is very good. \end{cor}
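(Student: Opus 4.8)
The plan is to read this off from Theorem \ref{thm:dim1} and Remark \ref{rem:goodandverygood}, since nearly all of the work has already been done there. The first point is that the irreducibility of $p$ is precisely the hypothesis that triggers Theorem \ref{thm:dim1}(2). Indeed, the null cone $\NN(V)=p\inv(p(0))$ is cut out scheme-theoretically by the ideal $(p)\subset\O(V)$, because $\O(V)^G=\C[p]$ and $p(0)=0$. When $p$ is irreducible this ideal is prime, so $\NN(V)$ is integral; in the factorization $p=p_1^{a_1}\cdots p_n^{a_n}$ of the theorem we simply have $n=1$ and $a_1=1$, so the unique irreducible component of $\NN(V)$ is schematically reduced.

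With this observation in hand, Theorem \ref{thm:dim1}(2) gives $\O(V_m)^{G_m}=p_m^*\O(\C_m)$ for every finite $m\geq 1$, while the case $m=0$ is just the definition of the quotient (here $G_0=G$ and $p_0^*=p^*$). Thus $V$ is $m$-good for all finite $m$. To upgrade goodness to very-goodness I would invoke Remark \ref{rem:goodandverygood}: since $Z\simeq\C$ is smooth, $Z_m\simeq\C^{m+1}$ is irreducible and reduced for every $m$, so $p_m^*$ is automatically injective and the notions of good and very good coincide. Hence $p_m^*$ is an isomorphism for all finite $m$, i.e.\ $V$ is $m$-very good for every finite $m$.

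It remains to treat $m=\infty$, and here I would pass to the filtered colimit. Grading by weight, a weight-$n$ element of $\O(V_\infty)$ already lies in $\O(V_n)$ and is $G_\infty$-invariant exactly when it is $G_n$-invariant, so $\O(V_\infty)^{G_\infty}=\bigcup_m\O(V_m)^{G_m}$, and likewise $\O(Z_\infty)=\bigcup_m\O(Z_m)=\varinjlim_m\O(Z_m)$. Under these identifications $p_\infty^*$ restricts to $p_m^*$ on each $\O(Z_m)$, and a filtered union of compatible isomorphisms is an isomorphism; thus $p_\infty^*$ is an isomorphism as well, completing the verification that $V$ is very good.

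The argument is essentially a bookkeeping assembly of earlier results, so there is no serious obstacle. The only step demanding any care is the passage to $m=\infty$, where one must know that taking $G_\infty$-invariants commutes with the colimit $\O(V_\infty)=\varinjlim_m\O(V_m)$; the weight grading makes this transparent, since each graded piece is already controlled at a finite level. (The stability hypothesis, inherited from Eck's original formulation, is compatible with this reasoning but is not logically needed once Theorem \ref{thm:dim1}(2) is available, as that theorem is proved for arbitrary $G$-modules with $\dim Z=1$.)
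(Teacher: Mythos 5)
Your proposal is correct and follows exactly the route the paper intends: the paper states this corollary without proof, as an immediate consequence of Theorem \ref{thm:dim1}(2) (irreducibility of $p$ gives the schematically reduced component of $\NN(V)$) together with Remark \ref{rem:goodandverygood} (smoothness of $Z\simeq\C$ makes $Z_m$ irreducible and reduced, so good implies very good). Your extra care with the passage to $m=\infty$ via the weight grading, and your observation that stability is not actually used, are both sound but do not change the argument.
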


We say that the $G$-module $V$ is \emph{coregular\/} if $Z$ is smooth. Equivalently, $\cO(V)^G$ is a polynomial ring \cite[II.4.3 Lemma 1]{Kr}. In this case, good and and very good are equivalent by Remark \ref{rem:goodandverygood}.

 \begin{cor}\label{cor:coreg}
Let $V$ be coregular. Then $V$ is very good if and only if each codimension one stratum of $V$ has a schematically reduced irreducible component.
 \end{cor}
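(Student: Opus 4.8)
The plan is to prove both implications by reducing, via Luna's slice theorem, to the one-dimensional-quotient situation handled in Theorem \ref{thm:dim1}, exploiting that $\quot VG$ is smooth (so that good and very good coincide, by Remark \ref{rem:goodandverygood}) together with a normality argument on the quotient jet scheme showing that only codimension one strata can obstruct very-goodness.

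First I would do the forward direction. Assuming $V$ is very good, Corollary \ref{cor:slicereps}(1) shows that every slice representation is very good. For a codimension one stratum $Z_{(H)}$, the nontrivial part $(M,H)$ of its slice representation satisfies $\dim\quot MH=1$, so $\quot MH\cong\C$; being very good it is in particular good for some $m\geq 1$, and Theorem \ref{thm:dim1}(1) then forces an irreducible component of $\NN(M)$ to be schematically reduced. A short argument using that the slice maps $\phi$ and $\psi$ of Theorem \ref{thm:slice} are \'etale (\'etale morphisms induce isomorphisms on completed local rings, hence preserve the multiplicities of the components of a divisor) identifies the factorization of the invariant cutting out $\NN(M)$ with that of the invariant cutting out $\overline{p\inv(Z_{(H)})}$; hence the codimension one stratum $p\inv(Z_{(H)})$ of $V$ has a schematically reduced component.

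For the converse I would work on $Z_m$, which is smooth and hence normal since $Z\cong\C^d$. The slice representations over the principal stratum $Z_\pr$ have only constant invariants on the relevant jet schemes (this is exactly what is shown inside the proof of Theorem \ref{thm:dim1}), so $V_\pr$ is very good by Corollary \ref{cor:slicereps}(2); in particular $p_m^*$ identifies $\operatorname{Frac}(\cO(Z_m))$ with $\operatorname{Frac}(\cO(V_m)^{G_m})$, and every $h\in\cO(V_m)^{G_m}$ is $p_m^*\tilde h$ for a rational function $\tilde h$ on $Z_m$. By normality it suffices to check that $\tilde h$ is regular along each prime divisor $E\subset Z_m$. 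Writing $\pi=\pi_{m,0}\colon Z_m\to Z$, an affine bundle and hence codimension-preserving, either $E$ dominates $Z$, so its generic point lies over $Z_\pr$ and $\tilde h$ is regular there, or $E=\pi\inv(Y)$ for an irreducible divisor $Y\subset Z$ whose generic point lies in $Z_\pr$ or in a codimension one stratum $Z_{(H)}$. In the latter case the hypothesis together with the equivalence established above makes the slice representation $M$ very good by Theorem \ref{thm:dim1}(2); choosing $f\in\cO(V)^G$ nonvanishing on $Z_{(H)}$ but vanishing on the remaining strata, Corollary \ref{cor:localization} gives $(\cO(V_m)^{G_m})_f\cong\cO(Z_m)_f$, so $\tilde h$ has no pole along $E$. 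Thus $\tilde h\in\cO(Z_m)$ and $p_m^*$ is surjective, hence an isomorphism.

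The hard part is the converse, and in particular the point that explains why the criterion involves only codimension one strata: one must preclude a deeper stratum of $Z$ (codimension $\geq 2$) from obstructing regularity of $\tilde h$. The affine-bundle structure of $\pi_{m,0}$ is what makes this work, since the preimage of a codimension $\geq 2$ subset of $Z$ has codimension $\geq 2$ in $Z_m$ and so supports no prime divisor; this is precisely why I would phrase the normality (Hartogs-type) argument on the smooth quotient $Z_m$ rather than on $V_m$, where the quotient map need not be equidimensional and codimension may drop. The remaining technical care lies in the \'etale comparison of divisor multiplicities between $\NN(M)$ and $p\inv(Z_{(H)})$, which I would justify using the local isomorphisms supplied by excellence in Theorem \ref{thm:slice} and Lemma \ref{lem:excellent}.
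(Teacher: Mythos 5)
Your proof is correct and follows essentially the same route as the paper: the forward direction reduces via Luna slices to Theorem \ref{thm:dim1}(1), and the converse is the paper's Hartogs-type argument on the smooth jet scheme $Z_m$, using that the locus over which $p_m$ fails to be well-behaved lies over a codimension $\geq 2$ subset of $Z$ and hence supports no prime divisor. Your extra care with the \'etale comparison of divisor multiplicities between $\NN(M)$ and $p\inv(Z_{(H)})$ fills in a step the paper leaves implicit, but the underlying argument is the same.
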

  
  \begin{proof} If a codimension one stratum has no schematically reduced irreducible component,  then the corresponding slice representation is of the form $(W+\theta,H)$ where $\theta$ is a trivial representation, $W^H=0$, $\dim \quot WH=1$ and $\NN(W)$ has no schematically reduced irreducible component. Then Corollary \ref{cor:slicereps} and Theorem  \ref{thm:dim1} show that $V$ is bad.  
  
Now assume that each codimension one stratum has a schematically reduced irreducible component.   Let $V'$ be the set of points of $V$ where $d p$ has maximal rank and let $Z'\subset Z$ be the image. Then the complement of $Z'$ has codimension at least 2 in $Z\simeq\C^k$. As in the case $k=1$, any $G_m$- invariant polynomial  on $V_m$ is the pullback of a rational function on $(\C^k)_m$ which has no poles on $Z'_m$. But the complement of $Z'_m$ in $(\C^k)_m$ has codimension at least 2. Hence our $G_m$-invariant polynomial is the pullback of a polynomial on $(\C^k)_m$.
\end{proof}

Now let $G$ be a connected complex reductive group and let $V$ be a $G$-module. We impose a mild technical condition which is automatic if $G$ is semisimple; we assume that $\cO(V)$ contains no nontrivial one-dimensional invariant subspaces. Equivalently, we assume that every semi-invariant of $G$ is invariant.

\begin{lemma}\label{lem:techcond}
Assume that $G$ is connected and that every semi-invariant of $\cO(V)$ is invariant. 
\begin{enumerate}
\item A function $f\in\O(V)^G$ is irreducible in $\O(V)$ if and only if it is irreducible in $\O(V)^G$. In particular, $\O(V)^G$ is a UFD.
\item The codimension one strata of $V$ are irreducible and schematically reduced.
\item Let $S\subset\quot VG$ have codimension at least $2$. Let $f_1$ and $f_2$ be relatively prime elements of $\O(V)^G$ which vanish on $S$. Then $f_1$ and $f_2$ are relatively prime elements of $\O(V)$, hence $p\inv(S)$ has codimension at least $2$ in $V$.
\end{enumerate}
\end{lemma}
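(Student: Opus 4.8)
The plan is to deduce all three parts from one observation: \emph{under the standing hypothesis, every irreducible factor of a $G$-invariant is itself $G$-invariant}. I would prove this first. Take $f\in\O(V)^G$ and factor it in the UFD $\O(V)$ as $f=c\prod_i p_i^{a_i}$ with the $p_i$ pairwise non-associate irreducibles. Each $g\in G$ acts as an algebra automorphism fixing $f$, so by uniqueness of factorization $g$ permutes the finite set of lines $\{\C p_i\}$. The stabilizer of a given line is Zariski closed in $G$, so the finitely many fibers of the resulting homomorphism $G\to\operatorname{Sym}(\{\C p_i\})$ are closed cosets; as $G$ is connected this homomorphism is trivial. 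Hence $g\cdot p_i=\chi_i(g)\,p_i$ for a character $\chi_i$ of $G$, so each $p_i$ is a semi-invariant, and the hypothesis forces $\chi_i$ to be trivial. Thus every $p_i\in\O(V)^G$.

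With this in hand, part (1) is bookkeeping. The units of $\O(V)^G$ are its nonzero scalars (an inverse must also lie in $\O(V)\supseteq\O(V)^G$), so an $f$ that is irreducible in $\O(V)$ is a fortiori irreducible in $\O(V)^G$. Conversely, if $f\in\O(V)^G$ is irreducible in $\O(V)^G$, the observation makes all of its factors $p_i$ invariant, so any nontrivial factorization of $f$ in $\O(V)$ would descend to one in $\O(V)^G$; hence $f=c\,p_1$ is irreducible in $\O(V)$. For the UFD statement I would check that every irreducible $f\in\O(V)^G$ is prime: by the above $f$ is prime in the UFD $\O(V)$, and if $f\mid gh$ with $g,h\in\O(V)^G$ then, say, $g=fq$ in $\O(V)$ with $q=g/f$ invariant (as $f$ and $g$ are), so $f\mid g$ in $\O(V)^G$. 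Since $\O(V)^G$ is a Noetherian domain it is atomic, and atomic together with ``irreducible $\Rightarrow$ prime'' gives that it is a UFD.

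Parts (2) and (3) then follow from (1). For (2), the closure $S$ of a codimension one stratum of $Z=\quot VG$ is irreducible of codimension one, so its ideal is a height one prime of the UFD $\O(V)^G$, hence principal: $I(S)=(f)$ with $f$ irreducible in $\O(V)^G$. By (1), $f$ is irreducible in $\O(V)$, so $p\inv(S)=\{f=0\}$ is irreducible, and it is schematically reduced because the single defining invariant $f$ is square-free (so $n=1$ and $a_1=1$ in the notation of the definition). For (3), if $f_1,f_2$ were not relatively prime in $\O(V)$ they would share an irreducible factor $p$, which by the observation is invariant and hence, by (1), irreducible in $\O(V)^G$ and a common factor there, contradicting relative primality in $\O(V)^G$. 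Thus $f_1,f_2$ are relatively prime in $\O(V)$, so $f_2$ vanishes on no component of the pure codimension one hypersurface $\{f_1=0\}$, whence $\{f_1=f_2=0\}$ has codimension at least $2$; since $f_1$ and $f_2$ vanish on $S$ we have $p\inv(S)\subseteq\{f_1=f_2=0\}$ and therefore $\codim p\inv(S)\geq 2$.

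The only genuinely delicate point is the connectedness argument in the first paragraph: it is exactly what prevents $G$ from permuting the irreducible factors of an invariant nontrivially, and it is where both hypotheses---connectedness of $G$ and the coincidence of semi-invariants with invariants---are used. Everything after that is a routine transfer of unique-factorization properties between $\O(V)$ and $\O(V)^G$.
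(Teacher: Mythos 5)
Your proof is correct and complete; the key observation that connectedness of $G$ plus the semi-invariant hypothesis forces every irreducible factor of an invariant to be invariant is exactly the standard route (it is the classical argument for factoriality of $\cO(V)^G$, as in Popov--Vinberg), and parts (1)--(3) then follow by the routine transfer of unique factorization that you carry out. The paper itself offers no proof to compare against --- it states the lemma as ``well-known'' --- so there is nothing to reconcile; your write-up supplies the omitted argument with no gaps.
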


\begin{proof} We only need to prove (1) since this implies both (2) and (3). Let $f\in \O(V)^G$ and let $f = p_1\cdots p_k$ be its prime factorization in $\O(V)$. Since every semi-invariant of $G$ is invariant, each $g\in G$ must permute the factors of $f$, so $f$ determines a homomorphism from $G$ to the permutation group on $k$ letters. But $G$ is connected and this map is continuous, so it must be trivial. Therefore each $p_i\in \O(V)^G$. \end{proof}

 \begin{cor}\label{cor:coregii}
Suppose that $V$ is coregular, $G$ is connected and   every semi-invariant of $\cO(V)$ is invariant. Then $V$ is very good. \end{cor}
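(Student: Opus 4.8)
The plan is to obtain the statement as an immediate consequence of the criterion already established for coregular modules, namely Corollary~\ref{cor:coreg}, combined with the structural information supplied by the technical hypothesis in Lemma~\ref{lem:techcond}. Since $V$ is assumed coregular, $Z=\quot VG$ is smooth, and Corollary~\ref{cor:coreg} tells us that $V$ is very good if and only if each codimension one stratum of $V$ has a schematically reduced irreducible component. Thus the entire task is reduced to verifying this one geometric condition on the codimension one strata.

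First I would invoke Lemma~\ref{lem:techcond}(2). The hypothesis that every semi-invariant of $\cO(V)$ is invariant is in force, so that lemma applies and yields that the codimension one strata of $V$ are irreducible and schematically reduced. This is exactly the input needed: an irreducible stratum which is itself schematically reduced automatically has a schematically reduced irreducible component, namely the whole stratum. Hence the criterion of Corollary~\ref{cor:coreg} is satisfied for every codimension one stratum, and we conclude that $V$ is very good for all $m$.

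There is essentially no obstacle to overcome here beyond correctly matching the conclusion of Lemma~\ref{lem:techcond}(2) to the hypothesis of Corollary~\ref{cor:coreg}; the real content has already been absorbed into those two results. The only point meriting a moment's care is the logical shape of the condition ``each codimension one stratum has a schematically reduced irreducible component'': because irreducibility of the strata is guaranteed by the lemma, the qualifier ``irreducible component'' is vacuous, and schematic reducedness of the stratum is precisely what the lemma provides. I would therefore keep the proof to a single short paragraph that cites Corollary~\ref{cor:coreg} to set up the criterion and Lemma~\ref{lem:techcond}(2) to confirm it.
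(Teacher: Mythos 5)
Your proof is correct and is exactly the argument the paper intends: the corollary is stated immediately after Lemma~\ref{lem:techcond} precisely because it follows by feeding part (2) of that lemma into the criterion of Corollary~\ref{cor:coreg}. Nothing is missing.
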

 
As above, let $V'$ be the set of points in $V$ where $dp$ has maximal rank, and let $Z'\subset Z$ be the image of $V'$.  
 
 \begin{lemma}\label{lem:rankcodim1} 
 Assume that $G$ is connected and that every semi-invariant of $\cO(V)$ is invariant. Let $S$ be a codimension one stratum of $Z$. 
 \begin{enumerate}
\item The rank of $dp$ is $\dim Z$ on an open dense subset of $p\inv(S)$.
\item $V\setminus V'$ has codimension at least $2$ in $V$.
\end{enumerate}
\end{lemma}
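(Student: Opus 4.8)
The plan is to reduce both parts to a local computation of $dp$ supplied by Luna's slice theorem (Theorem \ref{thm:slice}), into which I feed the schematic reducedness of the codimension one strata coming from Lemma \ref{lem:techcond}. Throughout I write $Z=\quot VG$ and identify $\O(V)^G$ with $\O(Z)$.

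First I would fix a codimension one stratum $S=Z_{(H)}$ and a generic point $v\in p\inv(S)$, and let $Gv_0$ be the unique closed orbit in the fibre $p\inv(p(v))$, whose isotropy group is conjugate to $H$. Applying Theorem \ref{thm:slice} at $v_0$, a $G$-saturated open neighbourhood of this fibre is $G$-isomorphic to $G\times^H N$ with $N$ the slice representation. In the Luna stratification the stratum $Z_{(H)}$ corresponds locally to $N^H$, so since $S$ has codimension one the slice splits as $N=\theta\oplus N'$ with $\theta=N^H$ trivial of dimension $\dim S$, with $(N')^H=0$, and with $\dim\quot{N'}H=1$; as in the proof of Theorem \ref{thm:dim1}, $\quot{N'}H\cong\C$, so $\O(N')^H=\C[q]$ for a single homogeneous generator $q$, and $\NN(N')=\{q=0\}$. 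In these coordinates $p$ becomes $[g,s,w]\mapsto(s,q(w))$ on $G\times^H N\to\theta\times\C$.

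The crux is the rank of $dp$ in this model. Since the $G$-directions lie in the kernel of $dp$ and the $H$-bundle projection $G\times N\to G\times^H N$ is a submersion, a direct computation gives $\im(dp_{[g,s,w]})=\theta\oplus dq_w(N')$, so $\rank dp=\dim\theta+1=\dim Z$ precisely when $dq_w\neq0$, and otherwise the rank drops by exactly one. Now let $f\in\O(V)^G$ generate the ideal of $\overline S$; by Lemma \ref{lem:techcond}(1) it is irreducible in $\O(V)$, so $p\inv(\overline S)=\{f=0\}$ is irreducible and, by Lemma \ref{lem:techcond}(2), schematically reduced, i.e.\ $df\neq0$ on a dense open subset. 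In the slice $f$ restricts to $q$ times a unit, whence $df\neq0\iff dq_w\neq0$ on $\{f=0\}$. Combining, $dp$ has full rank exactly on the dense open locus of $p\inv(S)$ where $df\neq0$, which is statement (1).

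For (2) I would first observe that $V_\pr\subseteq V'$: at a principal closed orbit the nontrivial part $W$ of the slice satisfies $\O(W)^H=\C$, so in the slice model $p$ is the projection $[g,s,w]\mapsto s$ onto $N^H$, which has full rank at every point of the $G$-saturated neighbourhood; as such neighbourhoods cover $V_\pr$, we get $V_\pr\subseteq V'$. Hence $V\setminus V'$ is contained in the union of $p\inv(Z_{(H)})$ over the finitely many non-principal strata. For a stratum of codimension $\geq2$ in $Z$, Lemma \ref{lem:techcond}(3) shows its preimage already has codimension $\geq2$ in $V$. For a codimension one stratum $S$, the irreducible hypersurface $p\inv(S)$ meets $V'$ in a dense open set by part (1), so $p\inv(S)\setminus V'$ is a proper closed subset of it and has codimension $\geq2$. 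As a finite union of sets of codimension $\geq2$, $V\setminus V'$ has codimension $\geq2$, proving (2).

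I expect the main obstacle to be the local rank computation together with the identification of $f$ with the slice generator $q$ up to a unit: this is exactly what converts the schematic-reducedness hypothesis ($df\neq0$ generically) into the statement that the single ``extra'' direction of $dp$ survives, rather than merely giving $dp\neq0$. A secondary point needing care is that the slice is taken at the closed orbit $v_0$ whereas the conclusion is wanted at generic $v\in p\inv(S)$; this is resolved because the Luna neighbourhood is $G$-saturated, hence contains the whole fibre through $v$, and the generic points of the irreducible variety $p\inv(S)$ simultaneously satisfy $df\neq0$ and lie in such a neighbourhood.
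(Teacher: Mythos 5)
Your proof is correct and follows essentially the same route as the paper: both reduce to Luna's slice model at a closed orbit over $S$ and convert the schematic reducedness of the codimension-one stratum (from Lemma \ref{lem:techcond}) into the statement that $dp$ attains rank $\dim Z$ on a dense open subset of $p\inv(S)$, then combine this with Lemma \ref{lem:techcond}(3) for the higher-codimension strata. The only differences are presentational: the paper phrases the rank computation via the fibre bundle $p\inv(S)\to S$ with fibre $G\times^H\NN(W)$ and the equivalence of schematic reducedness of $\{f=0\}$ with that of the fibre, whereas you write $p$ explicitly as $(s,w)\mapsto(s,q(w))$ in slice coordinates (note the slice theorem gives only an \'etale equivalence with this model, not a $G$-isomorphism, but that suffices for comparing ranks), and you make explicit the inclusion $V_{\pr}\subseteq V'$ that the paper leaves implicit when deducing (2) from (1).
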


\begin{proof}  
Let $F=p\inv(p(v))$ where $Gv$ is closed and $p(v)$ lies in $S$.  
Since $\O(V)^G$ is a UFD, the closure of $S$ is defined by an irreducible invariant $f$. Hence $p\inv(\overline{S})$ is irreducible and $df\neq 0$ on an open dense subset of $p\inv(S)$. Now $F$  is isomorphic to $G\times^H\NN(W)$ where $W$ is the slice representation of $H=G_v$. This fiber is the same everywhere over $S$ and $p\inv(S)$ is a fiber bundle over $S$ with fiber $F$. Thus $f\inv(0)$  is schematically  reduced if and only if $F$ is schematically reduced,  i.e., the $G$-invariant polynomials vanishing at $p(v)$ generate the ideal of $F$ in $\O(V)$. Thus at a smooth point of $F$  the rank of $dp$ must be maximal. It follows that $dp$ has maximal rank on an open dense subset of $p\inv(S)$ and we have (1). By Lemma \ref{lem:techcond}(3),  if $T$ is a stratum of $Z$ where $\codim_Z T\geq 2$, then  $\codim_V p\inv(T)\geq 2$. Hence  (2) follows from (1).
 \end{proof}

\begin{cor}\label{cor:Sreduced}
Let $(U,K)$ be a slice representation of $V$ and let $S=(\quot UK)_{(H)}$ be a codimension one stratum where $H\subset K$. Then $p_U\inv(\overline{S})$ is schematically reduced.
\end{cor}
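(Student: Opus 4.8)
The plan is to realize the slice representation $(U,K)$ inside the ambient module $V$ and then import the schematic reducedness of the codimension one strata of $V$ (Lemma \ref{lem:techcond}(2)) across the \'etale maps furnished by Luna's slice theorem; the standing hypothesis that every semi-invariant of $\O(V)$ is invariant is available for $(V,G)$ even though it need not hold for $(U,K)$, which is exactly why the transfer is needed. First I would invoke Theorem \ref{thm:slice} for $X=V$: the slice representation $(U,K)$ occurs at a point $v\in V$ with $G_v=K$, there is a slice $\Sigma$ with $G\cdot\Sigma$ open and $G$-saturated in $V$, an excellent map $\phi\colon G\times^K\Sigma\to G\cdot\Sigma$, and, after shrinking, an excellent $K$-equivariant surjection $\psi\colon\Sigma\to U_f$ with $f\in\O(U)^K$, $f(0)\neq 0$, sending $v$ to $0$. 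Because $\quot{(G\times^K\Sigma)}G\simeq\quot\Sigma K$ and $\quot\phi G$, $\quot\psi K$ are \'etale (as $\phi$, $\psi$ are excellent), we obtain \'etale maps from $\quot\Sigma K$ to an open subset of $Z:=\quot VG$ and to $(\quot UK)_f$; under them the Luna stratifications correspond and codimensions are preserved. Hence $S=(\quot UK)_{(H)}$ corresponds to the stratum $Z_{(H)}$ of closed $G$-orbits with isotropy $G$-conjugate to $H$, and $Z_{(H)}$ is again of codimension one. Its preimage $p\inv(\overline{Z_{(H)}})$ is thus a codimension one stratum of $V$, which by Lemma \ref{lem:techcond}(2) is schematically reduced, i.e.\ reduced as a scheme.

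Next I would transfer this reducedness to $U$. Pulling back along the \'etale map $\phi$ over the open set $G\cdot\Sigma$, the scheme $\phi\inv(p\inv(\overline{Z_{(H)}}))$ is reduced; by construction it equals $p_{G\times^K\Sigma}\inv(\overline{S'})$, where $S'=(\quot\Sigma K)_{(H)}$ is the stratum corresponding to $Z_{(H)}$ under $\quot{(G\times^K\Sigma)}G\simeq\quot\Sigma K$. Since $G\to G/K$ is a principal $K$-bundle, $G\times^K\Sigma$ is a fiber bundle over $G/K$ with fiber $\Sigma$, and this preimage is the associated bundle $G\times^K p_\Sigma\inv(\overline{S'})$, whose total space is reduced if and only if the fiber $p_\Sigma\inv(\overline{S'})$ is reduced; hence $p_\Sigma\inv(\overline{S'})$ is schematically reduced. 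Finally, the commuting square $p_{U_f}\circ\psi=\quot\psi K\circ p_\Sigma$ together with $\overline{S'}=(\quot\psi K)\inv(\overline S\cap(\quot UK)_f)$ gives $p_\Sigma\inv(\overline{S'})=\psi\inv\bigl(p_{U_f}\inv(\overline S\cap(\quot UK)_f)\bigr)$; since \'etale surjections both preserve and reflect reducedness, $p_{U_f}\inv(\overline S\cap(\quot UK)_f)=p_U\inv(\overline S)\cap U_f$ is schematically reduced.

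It remains to pass from $U_f$ to all of $U$. Here I would use that the Luna strata of $\quot UK$ are stable under the scaling induced by the linear $K$-action on $U$, so $\overline S$ is a $\C^*$-stable cone through the origin of $\quot UK$, and since $p_U$ is homogeneous, $p_U\inv(\overline S)$ is a cone whose every irreducible component contains $0$. Because $f(0)\neq 0$, no component lies in $\{f=0\}$, so each component meets $U_f$ in a dense open subset. As schematic reducedness is a condition at the generic points of the components of $p_U\inv(\overline S)$, and these generic points all lie in $U_f$ where reducedness was just established, $p_U\inv(\overline S)$ is schematically reduced on all of $U$.

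The main obstacle is the middle step: making precise the stratum correspondence and the transfer of the divisorial condition ``schematically reduced'' through the chain of \'etale and associated-bundle maps, in particular that reducedness is both preserved and reflected by \'etale surjections and by the construction $G\times^K(-)$ over the smooth base $G/K$. The globalization in the final paragraph is then a routine consequence of the cone structure, once one observes that $f(0)\neq 0$ keeps the origin---and hence a dense open subset of every component---inside $U_f$.
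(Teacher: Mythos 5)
Your proposal is correct, and it rests on the same underlying principle as the paper's proof --- that Luna's slice theorem makes the local structure of $p_U$ over $S$ agree with that of $p_V$ over the corresponding stratum $Z_{(H)}$, so that the known reducedness for $V$ (the standing hypothesis on semi-invariants plus Lemma \ref{lem:techcond}(2)) can be imported. The execution, however, is genuinely different. The paper works one level deeper: it compares the \emph{schematic fibers} of $p_U$ over a point of $S$ and of $p_V$ over a point of $Z_{(H)}$, which are $K\times^H\NN(W)$ and $G\times^H\NN(W)$ for one and the same slice representation $(W,H)$; combined with the equivalence (from the proof of Lemma \ref{lem:rankcodim1}) between schematic reducedness of the hypersurface and maximal rank of $dp$ at smooth points of the fiber, the claim becomes a pointwise statement about $\NN(W)$ that is literally identical on both sides. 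This makes the comparison a two-line argument and, because the condition is checked fiberwise over any point of $S$, removes any need to globalize. Your route instead pulls the entire divisor $p_V\inv(\overline{Z_{(H)}})$ through the chain $V\supset G\cdot\Sigma\leftarrow G\times^K\Sigma\to\Sigma\to U_f$, which forces you to verify that generic reducedness along each component ascends and descends through \'etale surjections and the $G\times^K(-)$ construction, to track how components correspond (the component groups of $G$, $K$, and $H$ may differ, so components upstairs and downstairs need not match bijectively --- only dominance of each component below by one above is needed, which your use of openness of \'etale maps supplies), and then to add the cone argument to pass from $U_f$ to $U$. All of these steps go through, and the final cone argument is a nice touch that the paper's fiberwise formulation simply sidesteps; what the paper's approach buys is brevity and the avoidance of the component bookkeeping, while yours has the merit of using only the formal transfer properties of excellent morphisms already set up in Lemma \ref{lem:excellent} and Theorem \ref{thm:slice}.
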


\begin{proof}
Over a point of $S$, the schematic fiber of $p_U$ is $K\times^H\NN(W)$ where $W$ is the slice representation of $H$. The schematic fiber of $p_V$ is   $G\times^H\NN(W)$ over points of  $S':=(\quot VG)_{(H)}$. Thus the ranks of $dp_U$ and $dp_V$ are the same on the inverse images of $S$ and $S'$, respectively, and it follows that  the hypersurface $p_U\inv(\overline{S})$ is schematically reduced. 
\end{proof}

\begin{prop}  \label{prop:ontoZ'} Let $H$ be reductive and $W$ an $H$-module such that the codimension one strata are schematically reduced. Set $Y:=\quot WH$. Suppose that $W'\cap\NN(W)\neq\emptyset$. Then $W$ is coregular and very good, and $p_W(W')=Y$.
\end{prop}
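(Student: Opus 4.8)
The plan is to derive all three conclusions from a single fact, that $Y:=\quot WH$ is smooth at the image of the origin, exploiting the conical structure of the quotient throughout. Write $p=p_W$ and $y_0:=p(0)$, so that $\NN(W)=p\inv(y_0)$. Recall that $Y$ is irreducible and normal, and that scaling on $W$ makes $p$ equivariant for a $\C^*$-action on $Y$ with strictly positive weights whose unique attractive fixed point is the vertex $y_0$; thus $Y$ is a cone with vertex $y_0$. Since $p$ is dominant and $W$ is smooth, the maximal rank of $dp$ equals $\dim Y=:d$, so $W'$ is exactly the locus where $\rank dp=d$.

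Granting coregularity for the moment, the remaining two assertions are quick. Very good then follows from Corollary \ref{cor:coreg}, since by hypothesis the codimension one strata are schematically reduced. For $p(W')=Y$: once $Y$ is smooth, $dp$ is surjective onto $TY$ at each point of $W'$, so $p|_{W'}\colon W'\to Y$ is a submersion and hence an open map; therefore $Z':=p(W')$ is open in $Y$. It is $\C^*$-stable (the rank of $dp$ is preserved along scaling orbits) and it contains $y_0$, because the chosen point $w_0\in W'\cap\NN(W)$ maps to $y_0$. Since every $y\in Y$ satisfies $\lim_{t\to0}t\cdot y=y_0$, openness and $\C^*$-stability of $Z'$ force $y\in Z'$; hence $Z'=Y$.

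The heart of the matter is coregularity, and by the conical structure this reduces to smoothness of $Y$ at the vertex $y_0$: if $\hat\O_{Y,y_0}$ is regular, then the embedding dimension of $Y$ at $y_0$ equals $d$, so by graded Nakayama $\O(W)^H$ is generated by $d$ elements, forcing it to be a polynomial ring (a $d$-dimensional graded domain generated by $d$ elements). To prove smoothness at $y_0$ I would pass to completions. Choose $w_0\in W'\cap\NN(W)$; then $p(w_0)=y_0$ and $\rank dp_{w_0}=d$. Set $P:=\hat\O_{W,w_0}\cong\C[[x_1,\dots,x_n]]$ and $S:=\hat\O_{Y,y_0}$; since $Y$ is normal and $\O_{Y,y_0}$ is excellent, $S$ is a $d$-dimensional complete local normal domain. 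The rank condition provides $g_1,\dots,g_d\in\mathfrak m_{y_0}$ whose pullbacks have linearly independent differentials at $w_0$, so their images in $P$ have independent linear parts and generate a coordinate subring $R_0\cong\C[[u_1,\dots,u_d]]$ of $P$.

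The main obstacle is the comparison of $S$ with $R_0$ inside $P$, and here is the argument I expect to use. The map $\psi\colon S\to P$ has image containing $R_0$, so $\dim\psi(S)\ge d=\dim S$; hence the prime $\ker\psi$ is $(0)$ and $\psi$ embeds $S$ as a subring with $R_0\subseteq S\subseteq P$. The coordinate retraction $\rho\colon P\to R_0$ fixing $R_0$ then restricts to a retraction $S\to R_0$ whose kernel $I$ is prime with $S/I\cong R_0$ of dimension $d=\dim S$; as $S$ is a domain this forces $I=(0)$, so $S=R_0$ is regular and $Y$ is smooth at $y_0$. The two delicate points are the injectivity of $\psi$ on completions—which is exactly why normality of $Y$, ensuring that $S$ is a domain, is indispensable—and the fact that it is $\rank dp_{w_0}=d$ \emph{at a point of the null cone}, rather than merely at a generic point, that places the coordinate subring $R_0$ inside $S$; this is precisely the content of the hypothesis $W'\cap\NN(W)\neq\emptyset$.
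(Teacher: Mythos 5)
Your proof is correct and follows the same outline as the paper's: maximal rank of $dp_W$ at a point of $\NN(W)$ gives smoothness of $Y$ at the vertex, the cone structure upgrades this to smoothness everywhere (hence coregularity), Corollary \ref{cor:coreg} gives very good, and openness of the image of the submersion $p_W|_{W'}$ combined with the cone structure gives $p_W(W')=Y$. The paper asserts the smoothness of $Y$ at the vertex in a single sentence, whereas you prove it via completions; that argument is sound (normality of $Y$ is indeed the essential input), the only step left implicit being that $\dim\psi(S)\ge d$ because $\psi(S)\supseteq R_0$, which follows by contracting the strictly increasing chain of coordinate primes $(u_1,\dots,u_i)P$ to $\psi(S)$.
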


\begin{proof} 
Since $dp_W$ has maximal rank at a point of $\NN(W)$, the image point $0\in Y$ is smooth. But $Y$ has a cone structure (induced by the scalar action of $\C^*$ on $W$). It follows that $Y$ is smooth, i.e., $W$ is a coregular representation of $H$. By Corollary \ref{cor:coreg} we have that $\O(W_m)^{H_m}=p_{W,m}^*\cO(Y_m)$.
Since $W'\cap\NN(W)\neq\emptyset$,  $p_W(W')$ contains a neighborhood of $0\in Y$. Since $W'$ and $Y$ are cones, $p_W(W')=Y$.
\end{proof}

\begin{thm}\label{qiso} Assume that $G$ is connected and that every semi-invariant of $\cO(V)$ is invariant.  Then for all $m\geq 1$, we have $\cO(V_m)^{G_m} =p_m^* \cO(Z'_m)$. \end{thm}

\begin{proof} Let $Gv$ be a closed orbit such that $p\inv(p(v))$ intersects $V'$. Let $(W,H)$ be the slice representation at $v$. Then Proposition \ref{prop:ontoZ'} and Corollary \ref{cor:Sreduced} show that $\cO(W_m)^{H_m}=p_{W,m}^*\cO((\quot WH)_m)$.  Using the slice theorem we see that this implies that  $\cO(U_m)^{G_m}=p_m^*\cO(\tilde Z_m)$ where $U$ is a $G$-saturated neighborhood of $Gv$ and $\tilde Z:=p(U)$ is a neighborhood of $p(v)$. Thus given $f\in\O(V'_m)^{G_m}$ there is a  unique $h\in\cO(Z'_m)$  such that $p_m^*h=f$ on $V'_m$. Since $\codim_VV\setminus V'\geq 2$, $f$ extends to an element of $\O(V_m)^{G_m}$.
\end{proof}

 \section{Classical representations of classical groups}
 
 Let $G=\SL_n$ and $V=k\C^n+\ell(\C^n)^*$. For applications to vertex algebras it would be nice to show that $p_\infty^*\cO(Z_\infty)=\cO(V_\infty)^{G_\infty}$. But we don't know if this is true for general $k$ and $\ell$. On the positive side we are able to show that $p_\infty^*$ is surjective when $k$ or $\ell$ is zero or when $Z$ is a complete intersection. In this section we concentrate on the complete intersection case. We also handle the complete intersection classical representations of the other classical groups (with $\Orth_n$ excluded, since it is not connected).

 Let $G$ be reductive and let $V=V_1\oplus V_2$ be a sum of  $G$-modules.  If $Z_i:=\quot {V_i}G$ is not a complete intersection for some $i$, then neither is $Z=\quot VG$. This is clear because the generators and relations of $\cO(V)^G$ can be chosen to be bihomogeneous in the generators of $\cO(V_i)$, so a minimal set of generators and relations for $\cO(V)^G$ will contain minimal sets of generators and relations for each $\cO(V_i)^G$.

 \begin{lemma}\label{lem:cislice}
 Suppose that $Z$ is a complete intersection. Then $\quot WH$ is a complete intersection for every slice representation $(W,H)$ of $V$.
 \end{lemma}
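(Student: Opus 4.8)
The plan is to transport the complete intersection property from $Z=\quot VG$ to $\quot WH$ along the \'etale maps on quotients furnished by Luna's slice theorem (Theorem \ref{thm:slice}), after first reducing the global statement to a statement at the single point $0\in\quot WH$.

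First I would record two standard facts. Since $W$ is a vector space, $\O(\quot WH)=\O(W)^H$ is a finitely generated, nonnegatively graded $\C$-algebra with degree-zero part $\C$, and $\quot WH$ carries the induced cone structure with vertex $0=p_W(0)$. For such a graded ring, being a complete intersection is equivalent to its local ring at the irrelevant maximal ideal --- that is, at the vertex $0$ --- being a complete intersection (the embedding dimension, Krull dimension, and minimal number of relations all agree for the graded ring and its localization at the irrelevant ideal). Hence it suffices to prove that $\quot WH$ is a complete intersection at $0$. Second, whether a finite-type $\C$-scheme is a complete intersection at a $\C$-point depends only on the completed local ring there; since an \'etale morphism of such schemes induces isomorphisms of completed local rings at corresponding $\C$-points (all residue fields being $\C$), this local property is transported in \emph{both} directions along \'etale maps.

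Next I would set up the \'etale comparison. Let $(W,H)$ be the slice representation at a closed orbit $Gv$ with $H=G_v$, and put $x:=v$. By Theorem \ref{thm:slice} there is a $G$-saturated affine open $U=G\cdot S\subseteq V$ with an excellent morphism $\phi\colon G\times^H S\to U$, together with (after shrinking $S$) an excellent surjective $H$-morphism $\psi\colon S\to W_f$ with $f\in\O(W)^H$, $f(0)\neq0$, and $\psi(x)=0$. Passing to quotients and using the isomorphism $\quot{(G\times^H S)}{G}\simeq\quot SH$ together with $\quot{W_f}{H}=(\quot WH)_f$, the excellence of $\phi$ and $\psi$ yields \'etale morphisms
$$
\quot\phi G\colon \quot SH \longrightarrow \quot UG,
\qquad
\quot\psi H\colon \quot SH \longrightarrow (\quot WH)_f .
$$
Here $\quot UG$ is an open subset of $Z$ containing $p(v)$ (as $U$ is $G$-saturated open in $V$), the point $\bar x\in\quot SH$ maps to $p(v)$ under $\quot\phi G$ and to $0$ under $\quot\psi H$, and $0\in(\quot WH)_f$ since $f(0)\neq0$.

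Finally I would run the transport. Because $Z$ is a complete intersection it is a complete intersection at $p(v)$; applying the \'etale invariance to $\quot\phi G$ shows $\quot SH$ is a complete intersection at $\bar x$, and applying it again to $\quot\psi H$ shows $(\quot WH)_f$ is a complete intersection at $0$. As $f(0)\neq0$, localization at $f$ does not change the local ring at $0$, so $\quot WH$ is a complete intersection at its vertex $0$, and therefore a complete intersection by the graded reduction above. The step requiring the most care is the bookkeeping of the two preliminary facts --- the \'etale-local nature of the complete intersection condition at $\C$-points and the reduction of the graded complete intersection property to the vertex --- but once these are stated, neither presents a genuine obstacle, and the slice theorem does the essential geometric work.
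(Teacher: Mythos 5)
Your proof is correct and follows essentially the route the paper intends: the paper gives no written argument beyond the remark that the lemma follows from Luna's slice theorem, and your transport of the complete intersection property along the \'etale quotient maps $\quot SH\to\quot UG$ and $\quot SH\to(\quot WH)_f$ furnished by Theorem \ref{thm:slice}, after reducing to the vertex via the cone structure, is precisely the intended verification. The two preliminary facts you isolate (\'etale invariance of the complete intersection condition at a $\C$-point and the graded reduction to the irrelevant maximal ideal) are standard and correctly applied.
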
 
 \begin{proof} Let $y$ denote the image of $0\in W$ in $\quot WH$.
By Luna's slice theorem, up to \'etale  mappings (which preserve the property of being a complete intersection), we have an isomorphism of a neighborhood of $y$    with a neighborhood of some $z\in Z$. It follows that $\quot WH$ is a complete intersection in a neighborhood of $y$. Let $p_1,\dots,p_d$ be minimal homogeneous generators of $\O(W)^H$. Then their relations are minimally generated by polynomials $h_j(y_1,\dots,y_d)$ which are homogeneous when we give $y_i$ the degree of $p_i$. Since $\quot  WH$ is a complete intersection near $y$, the number of $h_j$ is   $d-\dim\quot WH$. Hence $\quot WH$ is a complete intersection.
 \end{proof}
 Now consider $(V,G)=(k\C^n,\SL_n)$, $n\geq 3$. If $k=n+2$, then $\cO(V)^G$ has $\binom {n+2}2$ minimal generators  and $\binom {n+2}4$ minimal relations. Its   dimension is $2n+1>\binom {n+2}2-\binom {n+2}4$, which shows that we don't have a complete intersection for $k\geq n+2$. We say that there are \emph{too many relations\/}. For the case $n=2$ we don't have a complete intersection for $k\geq 5$ but we do for $k=4$. Note that here the representations of $\SL_2$ on $\C^2$ and $(\C^2)^*$ are isomorphic.
 
 Let $H_1,\dots,H_r$ be representatives for the conjugacy classes of isotropy groups of closed orbits in $V$. We say that $H_i$ is \emph{maximal proper\/} if $H_i\neq G$ and every other $H_j$ besides $G$ is conjugate to an isotropy group of the slice representation of $H_i$. Of course, in general, such an isotropy group does not exist.

\begin{lemma}\label{lem:maxpropersln}
Let $(V,G)=(k\C^n+\ell(\C^n)^*,\SL_n)$, $n\geq 3$ where $k\ell\neq 0$. Then the maximal proper isotropy group is $\SL_{n-1}$ with slice representation $(k-1)\C^{n-1}+(\ell-1)(\C^{n-1})^*$ (up to trivial factors).
\end{lemma}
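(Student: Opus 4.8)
\emph{Plan.} The argument falls into three parts: first realize $\SL_{n-1}$ as the isotropy group of an explicit closed orbit and compute its slice representation; then bound every other proper isotropy group from above by $\SL_{n-1}$; and finally invoke the classification of isotropy groups to upgrade this bound to the full maximality statement.

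For the realization I would embed $\SL_{n-1}\hookrightarrow\SL_n$ as the block matrices $\diag(A,1)$ and take the point $w=(e_n,0,\dots,0;\,e_n^*,0,\dots,0)\in k\C^n+\ell(\C^n)^*$, where $k\ell\neq0$ guarantees at least one vector slot and one covector slot. An element $g\in\SL_n$ fixes $w$ iff $ge_n=e_n$ and $e_n^*g=e_n^*$, i.e.\ iff $g=\diag(A,1)$ with $A\in\SL_{n-1}$; thus $G_w=\SL_{n-1}$. To see that $Gw$ is closed I would apply the Kempf--Ness criterion: with respect to the standard Hermitian form the traceless moment map at $w$ is proportional to $E_{nn}-E_{nn}=0$, so $w$ is a minimal vector and $Gw$ is closed. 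The slice representation is $N=V/\gg w$ as an $\SL_{n-1}$-module. Decomposing $\C^n=\C^{n-1}\oplus\C e_n$ and $(\C^n)^*=(\C^{n-1})^*\oplus\C e_n^*$ gives $V|_{\SL_{n-1}}=k\C^{n-1}+\ell(\C^{n-1})^*+(k+\ell)\C$, while the orbit direction satisfies $\gg w\cong\gg/\gh=\liesl_n/\liesl_{n-1}=\C^{n-1}+(\C^{n-1})^*+\C$. Subtracting the latter $H$-submodule (legitimate since $H$ is reductive) yields $N=(k-1)\C^{n-1}+(\ell-1)(\C^{n-1})^*+(k+\ell-1)\C$, which is the asserted slice representation up to trivial factors.

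Next I would bound the remaining isotropy groups. Let $K=G_{w'}$ be the isotropy group of a closed orbit with $K\neq G$; then $w'\neq0$, so $K$ fixes a nonzero vector or a nonzero covector. In the vector case $K$ lies in the stabilizer of that vector, which (as $\SL_n$ is transitive on $\C^n\setminus\{0\}$ for $n\geq3$) is conjugate to $G_{e_n}=\SL_{n-1}\ltimes\C^{n-1}$; since $K$ is reductive it is conjugate, by an element of the unipotent radical, into the Levi factor $\SL_{n-1}$. The covector case is symmetric, the Levi factor again being the same block-diagonal $\SL_{n-1}$. Hence $(K)\leq(\SL_{n-1})$ for every proper isotropy class, so $\SL_{n-1}$ is the unique maximal candidate.

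Finally, to establish maximality in the sense of the definition I must show that each such $K$ actually \emph{occurs} as an isotropy group of the slice representation $N$, and not merely that it embeds in $\SL_{n-1}$. Here I would appeal to the explicit description of the isotropy groups of $(k\C^n+\ell(\C^n)^*,\SL_n)$ furnished by \cite[Lemma 3.8]{Sch} together with \cite{We}: these form the totally ordered chain $\SL_{n-1}\supset\SL_{n-2}\supset\cdots$, and the slice representation $(k-1)\C^{n-1}+(\ell-1)(\C^{n-1})^*$ is again of the same type with parameters $(n-1,k-1,\ell-1)$, so by the same classification (equivalently, by induction on $n$) its isotropy groups are exactly $\SL_{n-2}\supset\SL_{n-3}\supset\cdots$. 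Combined with the previous step, every proper isotropy group besides $G$ is then conjugate to an isotropy group of $N$. I expect this last step to be the main obstacle: the upper bound $(K)\leq(\SL_{n-1})$ is elementary, but Luna's slice theorem only controls the isotropy classes of orbits \emph{near} $Gw$, so verifying that the \emph{global} list of proper isotropy classes coincides with the nested chain realized inside the slice representation is precisely what forces one to invoke the external classification results rather than argue purely locally.
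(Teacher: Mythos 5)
Your proposal is correct, and it is worth noting that the paper does not actually supply a proof here: the text preceding the lemma consists of the single sentence ``Using \cite[Lemma 3.8]{Sch} and \cite{We} one can show the following,'' so your argument is precisely the expansion that citation is standing in for. The three ingredients you assemble are all sound: the point $w=(e_n,0,\dots;e_n^*,0,\dots)$ does have stabilizer the block-diagonal $\SL_{n-1}$, the Kempf--Ness computation $\sum v_iv_i^*-\sum\alpha_j^*\alpha_j=0$ correctly certifies that $Gw$ is closed, and the slice computation $N=V/\gg\cdot w=(k-1)\C^{n-1}+(\ell-1)(\C^{n-1})^*+(k+\ell-1)\C$ is right (subtracting $\liesl_n/\liesl_{n-1}\cong\C^{n-1}+(\C^{n-1})^*+\C$ as an $H$-module is legitimate by complete reducibility). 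Your Mostow/Levi argument gives the upper bound $(K)\leq(\SL_{n-1})$, and you are also right to flag that this bound alone does not finish the job: the definition of ``maximal proper'' demands that every proper isotropy class be \emph{realized} in the slice representation, and Luna's theorem only sees orbits near $Gw$. The one improvement worth recording is that you do not really need to outsource this last step: the same minimal-vector analysis you use for closedness classifies all isotropy groups of closed orbits directly. A minimal vector satisfies $\sum v_iv_i^*-\sum\alpha_j^*\alpha_j=cI$; for $c=0$ this forces $\C^n=U\oplus W$ with $U=\operatorname{span}(v_i)$, $W=\bigcap\ker\alpha_j$ and $\dim U=\codim W=r$, giving isotropy $\SL_{n-r}$ for $0\leq r\leq\min(k,\ell,n)$, while $c\neq 0$ forces trivial isotropy (possible only when $k\geq n$ or $\ell\geq n$). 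Since the slice representation has the same shape with parameters $(n-1,k-1,\ell-1)$, every proper class on this list (including $\SL_{n-1}$ itself, as the isotropy of $0\in N$ --- a case your phrasing ``exactly $\SL_{n-2}\supset\cdots$'' slightly elides) occurs there, which closes the argument without appealing to the external classification except as a cross-check.
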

\begin{proof}
We may assume that $k\geq\ell$. Write $V=V_1\oplus\cdots\oplus V_\ell\oplus(k-\ell)\C^n$ where each $V_i$ is a copy of $\C^n\oplus (\C^n)^*$. Then   \cite[3.8]{Sch} implies that any proper  isotropy class $(H)$  of $V$ is contained in a proper isotropy class $(L)$ of one of the $V_i$ or $(k-\ell)\C^n$. 
By \cite[Theorem 2.6.A]{We}, the invariants of any $V_i$ are generated by the contraction of $\C^n$ with $(\C^n)^*$. The nonzero   orbits where the invariant does not vanish are closed and have  isotropy class $(\SL_{n-1})$. Again by \cite[Theorem 2.6.A]{We} the nonconstant invariants of $(k-\ell)\C^n$  are  generated by determinants (this only happens for  $k-\ell\geq n$) in which case the  orbits where a determinant does not vanish are closed and have trivial isotropy groups. Hence $\SL_{n-1}$ is maximal proper. Let $W$ be the slice representation of $\SL_{n-1}$. Then we have that
$$
W\oplus\lie{sl}_n/\lie{sl}_{n-1}=V\text{ as $\SL_{n-1}$-modules.}
$$
This gives that $W=(k-1)\C^{n-1}+(\ell-1)(\C^{n-1})^*$ (up to trivial factors).
\end{proof}

Now suppose that $(V,G)$ is as in the lemma above and that $k=n+1$, $n\geq 3$, and that $Z$ is a complete intersection. If $\ell=n$, then there are again too many relations. Thus we must have $\ell<n$. In fact we can have $\ell=(n-1)$. Consider the action of $\SL_{n+1}$ on $\Hom(\C^{n+1},\C^n)\simeq (n+1)\C^n$ and $\SL_{n-1}$ on $\Hom(\C^{n-1},(\C^n)^*)\simeq (n-1)(\C^n)^*$. Then $\SL_{n+1}\times\SL_{n-1}$ acts on the $G$-invariants and the generators transform by the representations $\wedge^n(\C^{n+1})$ (the determinants) and $\C^{n+1}\otimes\C^{n-1}$ (the contractions). By \cite[Theorem 2.14.A]{We}, the relations are generated by those corresponding to 
$$
\C^{n-1}\simeq  \wedge^{n+1}(\C^{n+1})\otimes\C^{n-1}\subset \wedge^n(\C^{n+1})\otimes(\C^{n+1}\otimes\C^{n-1}).
$$ 
We have a complete intersection since 
$$
\binom{n+1}n+n^2-1-(n-1)=n^2+1=\dim Z=\dim V-\dim G=2n^2 -(n^2-1).
$$

Now we state a general theorem enabling us to show that $p_m^*\colon \cO(Z_m)\to\cO(V_m)^{G_m}$ is an isomorphism for all $m\geq 0$. Let $G$ be connected  reductive and $V$ a $G$-module such that every semi-invariant is an invariant and such that $Z$ is a complete intersection. As before, let $Z'$ denote the image of the points $V'$ of $V$ where $dp$ has maximal rank. Let $Z_{m,0}$ denote $Z_m\setminus Z_m'$. Recall that $Z_m'$ consists of smooth points of $Z_m$.

\begin{thm}\label{thm:ci}
Let $G$ and $V$ be as above. Let $(W,H)$ be a nontrivial slice representation and write $W=W^H\oplus W_1$ where $W_1$ is an $H$-module. Let $q\colon {W_1}\to Y:=\quot {W_1}H$ be the quotient mapping and suppose that $\pi_m\inv(q(0))\cap Y_{m,0}$ has codimension at least $2$ in $Y_m$ for all $m\geq 0$ and all $(W,H)$. Then $Z_m$ is normal for all $m$ and $p_m^*\colon \cO(Z_m)\to\cO(V_m)^{G_m}$ is an isomorphism for $m=0,1,\dots,\infty$.
\end{thm}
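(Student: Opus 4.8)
The plan is to derive the result from Theorem~\ref{qiso}, which already gives $\cO(V_m)^{G_m}=p_m^*\cO(Z'_m)$, by showing that every regular function on $Z'_m$ extends across $Z_{m,0}=Z_m\setminus Z'_m$. This extension is guaranteed once we know two things: that $Z_m$ is normal, and that $\codim_{Z_m}Z_{m,0}\geq 2$. I would prove normality through Serre's criterion $R_1+S_2$, obtaining $S_2$ from a complete intersection structure on $Z_m$ and $R_1$ from the codimension estimate, which is also what lets us invoke algebraic Hartogs at the end.

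For $S_2$ I would show that $Z_m$ is a complete intersection of the expected dimension $(m+1)\dim Z$. Since $V$ is smooth, Boutot's theorem shows that $Z=\quot VG$ has rational singularities; as $Z$ is by hypothesis a complete intersection, Mustaţă's theorem \cite{Mu} gives that $Z_m$ is irreducible for every $m$. The open subscheme $(Z^{\mathrm{sm}})_m=\pi_{m,0}\inv(Z^{\mathrm{sm}})$ is nonempty, smooth, and of dimension $(m+1)\dim Z$, so irreducibility forces $\dim Z_m=(m+1)\dim Z$. Writing $c=\codim Z$, the $c(m+1)$ equations $f_j^{(i)}$ defining $Z_m$ therefore cut out exactly the expected codimension, hence form a regular sequence in the polynomial ring; thus $Z_m$ is a complete intersection, so Cohen--Macaulay and in particular $S_2$.

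The main obstacle is the codimension estimate $\codim_{Z_m}Z_{m,0}\geq 2$, which I would prove stratum by stratum using Luna's slice theorem. Stratify $Z$ by isotropy type. Over the principal stratum the slice quotient has $Y$ a point, so $Z$ is smooth there and $Z_m$ contributes no bad locus. Fix a non-principal stratum $Z_{(H)}$ and a closed orbit over it with slice representation $(W,H)$; Theorem~\ref{thm:slice} together with Lemma~\ref{lem:excellent} identifies $Z_m$ étale-locally over the corresponding fiber with $(\quot WH)_m=(W^H)_m\times Y_m$, where $Y=\quot{W_1}H$, carrying $Z_{m,0}$ to $(\quot WH)_{m,0}=(W^H)_m\times Y_{m,0}$. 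Under this identification the part of $Z_{m,0}$ lying over $Z_{(H)}$ (where the $Y$-base point is forced to be $q(0)$) corresponds to $(W^H)_m\times(\pi_m\inv(q(0))\cap Y_{m,0})$, whose codimension in $(W^H)_m\times Y_m$ equals $\codim_{Y_m}(\pi_m\inv(q(0))\cap Y_{m,0})\geq 2$ by the hypothesis applied to $(W,H)$. Running over the finitely many strata gives the claim. The delicate points here are checking that the slice theorem transfers faithfully to jet schemes and that the localization of $Z_{m,0}$ over each stratum is captured exactly by the restriction to $\pi_m\inv(q(0))$ built into the hypothesis.

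Since $Z'_m$ consists of smooth points, the estimate yields $\codim_{Z_m}Z_m^{\sing}\geq 2$, i.e.\ $R_1$; with $S_2$, Serre's criterion shows $Z_m$ is normal, hence reduced and (by Mustaţă) irreducible. Algebraic Hartogs across the codimension-two set $Z_{m,0}$ then gives $\cO(Z_m)\tosim\cO(Z'_m)$, so by Theorem~\ref{qiso} we obtain $\cO(V_m)^{G_m}=p_m^*\cO(Z'_m)=p_m^*\cO(Z_m)$, which is surjectivity; injectivity follows from Remark~\ref{rem:goodandverygood}, since $Z_m$ is irreducible and reduced and $p_m$ is dominant. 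Finally, for $m=\infty$ I would pass to the colimit, using $\cO(V_\infty)^{G_\infty}=\varinjlim_m\cO(V_m)^{G_m}$ and $\cO(Z_\infty)=\varinjlim_m\cO(Z_m)$ compatibly, so that $p_\infty^*=\varinjlim_m p_m^*$ is an isomorphism.
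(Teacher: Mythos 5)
Your proposal is correct and follows essentially the same route as the paper: Boutot plus Musta\c{t}\u{a} give that each $Z_m$ is irreducible, reduced, and a complete intersection, the heart of the matter is the codimension-two estimate for $Z_{m,0}$ obtained from Luna's slice theorem and the hypothesis on each nontrivial slice representation, and the conclusion then follows from normality, extension across $Z_{m,0}$, and Theorem \ref{qiso}. The only differences are organizational --- you decompose $Z_{m,0}$ directly over the strata of $Z$ where the paper runs an induction over the strata of $Y$, and you spell out the Serre-criterion, Hartogs, and $m=\infty$ colimit steps that the paper leaves implicit.
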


\begin{proof}
By Boutot \cite{Bo}, it is known that $Z$ has rational singularities. Then \cite{Mu} shows that each $Z_m$ is irreducible, reduced and a complete intersection, and   $Z_\infty$ is reduced and irreducible.  We thus need only show that $Z_m\setminus Z_m'$ has codimension at least two in $Z_m$ for all $m$. Let  $S\subset Z$ be the stratum corresponding to $H$.  Then Luna's slice theorem shows that a neighborhood of $S$ in $Z$ is locally isomorphic  to $S\times Y$ (up to \'etale mappings). Thus $(S\times Y)'_m\simeq S_m\times Y_m'$. Now the points of $Y_{m,0}$ either lie over the stratum $\{q(0)\}$ of $Y$ or they lie over a stratum $T\subset Y$ with a smaller isotropy group. By induction we can assume that $\pi_m\inv(T)\cap Y_{m,0}$ has codimension at least two in $Y_m$. By hypothesis, $\pi_m\inv(q(0))\cap Y_{m,0}$ has codimension at least two in $Y_m$. Thus $Y_{m,0}$ has codimension two in $Y_m$ and $Z_{m,0}$ has codimension two in $Z_m$ over a neighborhood of $S$. Since this is true for all strata $S$, we find that $Z_{m,0}$ has codimension two in $Z_m$, i.e., $Z_m$ is normal. Thus $p_m^*\colon \cO(Z_m)\to\cO(V_m)^{G_m}$ is an isomorphism for  $m \leq \infty$.
\end{proof}

\begin{thm}\label{thm:p=n+1sln}
Let $(V,G)=((n+1)\C^n+(n-1)(\C^n)^*,\SL_n)$, $n\geq 2$. Then $Z_m$ is normal for all $m\geq 0$ and $p_m^*\colon \cO(Z_m) \to \cO(V_m)^{G_m}$ is an isomorphism for $m=0,1,\dots,\infty$.
\end{thm}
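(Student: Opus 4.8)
\section*{Proof proposal}

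The plan is to derive the theorem from Theorem \ref{thm:ci}, so everything reduces to checking its hypotheses for $(V,G)=((n+1)\C^n+(n-1)(\C^n)^*,\SL_n)$. Two of them are immediate. Since $\SL_n$ is semisimple it has no nontrivial characters, so every semi-invariant is invariant and Lemma \ref{lem:techcond} applies; and the generator--relation count recorded just before the theorem exhibits $Z:=\quot VG$ as a complete intersection of dimension $n^2+1$ (with rational singularities by Boutot, so that Mustata's theorem makes each $Z_m$ irreducible, reduced, and a complete intersection). What remains is the codimension condition on $\pi_m\inv(q(0))\cap Y_{m,0}$ for every slice representation, and this is the real content.

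First I would pin down the slice representations. By Lemma \ref{lem:maxpropersln} the maximal proper isotropy group is $\SL_{n-1}$ with slice representation, up to trivial summands, the analogous module $n\C^{n-1}+(n-2)(\C^{n-1})^*$ of $\SL_{n-1}$. Iterating, the nontrivial slice representations of $V$ are exactly the analogous modules $V^{(j)}:=(j+1)\C^j+(j-1)(\C^j)^*$ of $\SL_j$ for $2\le j\le n$ (plus trivial factors), the chain terminating at $\SL_2$ acting on $V^{(2)}\cong 4\C^2$, whose only nontrivial slice sits over the cone point. For each such slice the nontrivial part $W_1$ has quotient $Z^{(j)}:=\quot{V^{(j)}}{\SL_j}$ and $q(0)$ is the cone point $0\in Z^{(j)}$. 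Hence it suffices to show, for every $j$ with $2\le j\le n$, that the fiber $\pi_m\inv(0)$ over the cone point has codimension at least $2$ in $(Z^{(j)})_m$; this fiber contains $\pi_m\inv(0)\cap(Z^{(j)})_{m,0}$, and the remaining strata of each $Z^{(j)}$ are absorbed by the internal induction in the proof of Theorem \ref{thm:ci}.

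To estimate the cone-point fiber I would use explicit equations. The ring $\cO(V^{(j)})^{\SL_j}$ is generated by the $j+1$ maximal determinants $D_k$ of the vectors and the $(j+1)(j-1)$ contractions $c_{ki}=\langle v_k,w_i\rangle$, subject to the $j-1$ bilinear relations $R_i=\sum_k(-1)^kc_{ki}D_k$. Thus $(Z^{(j)})_m$ is defined by the prolongations $R_i^{(l)}$, $0\le l\le m$; restricting to $\pi_m\inv(0)$ sets every weight-zero coordinate to zero, killing $R_i^{(0)}$ and $R_i^{(1)}$ and leaving the $(j-1)(m-1)$ equations $R_i^{(l)}|$, $2\le l\le m$, on an affine space of dimension $(j^2+j)m$. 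If these cut out their expected codimension, the fiber has dimension $m(j^2+1)+(j-1)$, i.e.\ codimension $j^2-j+2\ge 2$ in $(Z^{(j)})_m$, which is what we need.

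The main obstacle is precisely this expected-codimension claim. The lower bound on the fiber dimension is automatic from Krull's theorem, so the content is the matching upper bound --- equivalently, that the restricted prolongations $R_i^{(l)}|$ have no excess-dimensional common component --- and this does not follow formally from $Z_m$ being a complete intersection. I would attack it by exploiting the bilinear, generic shape of the $R_i$: either a Gr\"obner/initial-ideal degeneration of the system $\{R_i^{(l)}|\}$, or a direct dimension estimate for the incidence variety of $m$-jets based in the null cone $\NN(V^{(j)})$, combined with the irreducibility of $(Z^{(j)})_m$ from Mustata's theorem. It is encouraging that in low cases the excess loci are visibly lower dimensional; e.g.\ for $j=2$, where $Z^{(2)}$ is the quadric cone over $\mathrm{Gr}(2,4)$, the relations $R^{(l)}|$ reduce to Pl\"ucker quadrics in the higher-weight variables and the cone-point fiber is directly seen to have codimension $4$. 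Once the bound holds for all $j\le n$, Theorem \ref{thm:ci} delivers normality of $(Z^{(n)})_m$ and the isomorphism $p_m^*$ for $m=0,1,\dots,\infty$.
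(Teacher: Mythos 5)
Your reduction to Theorem \ref{thm:ci} via Lemma \ref{lem:maxpropersln}, your identification of the slice representations, and your codimension bookkeeping (fiber dimension $m(j^2+1)+(j-1)$, hence codimension $j^2-j+2$) all match the paper. But you stop exactly at the step that carries the content: you state that the restricted prolongations $R_i^{(l)}|$, $2\le l\le m$, must cut out their expected codimension, call this ``the main obstacle,'' and offer only speculative routes (Gr\"obner degenerations, incidence-variety estimates, low-$j$ evidence). As written this is a genuine gap, not a proof: Krull gives only the lower bound on the fiber dimension, and nothing you have established rules out an excess-dimensional component of the cone-point fiber.

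The paper closes this gap with a short exact identification that your setup is one observation away from. The relations $f_j$ are \emph{bilinear}: linear in the determinants $\omega$ and linear in the contractions $\alpha$. Writing a point of $\pi_m\inv(p(0))$ as $(\sum_{i\ge 1}t^i\omega_i,\sum_{i\ge 1}t^i\alpha_i)$ (the weight-zero parts vanish), the surviving equations are
$f_j(\omega_1,\alpha_1)=0$, $f_j(\omega_1,\alpha_2)+f_j(\omega_2,\alpha_1)=0$, \dots, $\sum_{a+b=m}f_j(\omega_a,\alpha_b)=0$,
which involve only $\omega_1,\dots,\omega_{m-1},\alpha_1,\dots,\alpha_{m-1}$ ($\omega_m,\alpha_m$ are free, contributing the $n^2+n$ you counted). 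After shifting indices down by one, this system is \emph{literally} the defining system of $Z_{m-2}$, whose dimension is known exactly because Boutot plus Mustata's theorem make every $Z_{m'}$ an irreducible, reduced complete intersection of dimension $(m'+1)(n^2+1)$. Hence $\dim\pi_m\inv(p(0))=\dim Z_{m-2}+(n^2+n)$ on the nose, giving codimension $n^2-n+2\ge 4$ with no ``expected dimension'' hypothesis to verify. So the fix is not a new technique but the reindexing trick: recognize the restricted system as a smaller jet scheme of $Z$ itself. With that in place, your argument and the paper's coincide.
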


\begin{proof}
Let us call our $n-1$ relations $f_1,\dots,f_{n-1}$. Then the $f_j$ are bilinear, being linear in the determinants and the contractions. Let 
$$
(\omega(t),\alpha(t))=(\sum_{i=1}^m t^i\omega_i,\sum_{i=1}^mt^i\alpha_i)
$$ 
be elements of $\pi_m\inv(p(0))$ where the $\omega_i$ correspond to the    determinants and the $\alpha_i$ to the contractions. Then for $j=1,\dots,n-1$ we have the equations $f_j(\omega(t),\alpha(t))=0\mod t^{m+1}$. Thus we get the equations
\begin{equation*}
\begin{split}
f_j(\omega_1,\alpha_1)=0,\  &f_j(\omega_2,\alpha_1)+f_j(\omega_1,\alpha_2)=0,\dots\\
&\dots,f_j(\omega_{m-1},\alpha_1)+\dots+f_j(\omega_1,\alpha_{m-1})=0.
\end{split}
\end{equation*}
We have no conditions on $\omega_m$ and $\alpha_m$, and the equations above on the $\omega_i$ and $\alpha_{i'}$ for $1\leq i,i'\leq m-1$ give something  isomorphic to  $Z_{m-2}$. Thus the dimension of $\pi_m\inv(p(0))$ is $\dim Z_{m-2}+(n^2+n)$ for $m\geq 2$ and the codimension of $\pi_m\inv(p(0))$ in $Z_m$ is 
\begin{equation*}
\begin{split}
&\dim Z_m-(\dim Z_{m-2})-(n^2+n)\\
&=(m+1-(m-1))(n^2+1)-(n^2+n)=n^2-n+2\geq 4.
\end{split}
\end{equation*}
For $m=1$ we get the same codimension and for $m=0$ the codimension is $n^2+1\geq 5$. Now we can use Lemma \ref{lem:maxpropersln} and Theorem \ref{thm:ci} to finish the proof.
\end{proof}

We consider later the case of $(n(\C^n+(\C^n)^*),\SL_n)$ whose quotient is a  complete intersection.

\begin{thm}
Let $(V,G)=(k\C^{2n},\Sp_{2n})$, $n\geq 1$. Then $V$ is coregular for $k\leq 2n+1$, $Z$ is a hypersurface for $k=2n+2$ and is not a complete intersection for $k\geq 2n+3$. When $k=2n+2$, $Z_m$ is normal for all $m$ and  $p_m^*\colon \cO(Z_m)\to \cO(V_m)^{G_m}$ is an isomorphism for $m=0,1,\dots,\infty$.
\end{thm}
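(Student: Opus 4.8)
The plan is to reduce the $k=2n+2$ statement to Theorem \ref{thm:ci}. First I would record the classical invariant theory \cite{We}: $\cO(V)^G$ is generated by the $\binom k2$ symplectic pairings $[ij]=\omega(v_i,v_j)$ ($i<j$), assembled into a skew matrix $A=([ij])$, and $Z=\quot VG$ is the variety of skew $k\times k$ matrices of rank $\le 2n$. Since a skew matrix has even rank, for $k\le 2n+1$ the rank bound is vacuous and $Z$ is the affine space on the pairings, so $V$ is coregular; for $k=2n+2$ the sole constraint is $\det A=\operatorname{Pf}(A)^2=0$, so $Z=\{\operatorname{Pf}=0\}$ is a hypersurface of degree $n+1$; and for $k\ge 2n+3$ the second fundamental theorem gives $\binom{k}{2n+2}$ vanishing $(2n+2)$-subpfaffians as generators of the ideal, and since $\codim_{\C^{\binom k2}}Z=\binom k2-(\dim V-\dim G)$ equals $3$ already at $k=2n+3$ while there are $2n+3$ relations, there are too many relations and $Z$ is not a complete intersection.

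For $k=2n+2$ I would then check the standing hypotheses of Theorem \ref{thm:ci}: $\Sp_{2n}$ is connected semisimple so every semi-invariant is invariant; $Z$ is a hypersurface, hence a complete intersection; and $Z$ has rational singularities by Boutot \cite{Bo}, so by Mustata \cite{Mu} each $Z_m$ is reduced, irreducible and a complete intersection with $\dim Z_m=(m+1)(N-1)$, where $N:=\binom{2n+2}2=(n+1)(2n+1)$. What remains is the codimension hypothesis of Theorem \ref{thm:ci}. Next I would identify the slice representations: a rank-$2j$ matrix is the Gram matrix of $k$ vectors spanning a nondegenerate $U\cong\C^{2j}$, with isotropy $H=\Sp(U^\perp)\cong\Sp_{2(n-j)}$, and a computation with $\lie{sp}_{2n}\cong\operatorname{Sym}^2\C^{2n}$ (or the method of \cite{Sch}) shows the nontrivial part of the slice is $W_1\cong(k-2j)\C^{2(n-j)}=(2(n-j)+2)\,\C^{2(n-j)}$; in particular the maximal proper isotropy group is $\Sp_{2n-2}$ with slice $(2n)\C^{2n-2}$, which is the same hypersurface case one step down. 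Thus every slice quotient $Y=\quot{W_1}H$ is again a Pfaffian hypersurface with $n$ replaced by some $n'<n$, and since $\pi_m\inv(q(0))\cap Y_{m,0}\subseteq\pi_m\inv(q(0))$ it suffices to bound, uniformly in $n$, the codimension of the whole fibre of $\pi_m$ over the vertex of such a hypersurface.

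The crux is this vertex-fibre estimate, and the key observation is that $Z$ is a cone with $\operatorname{Pf}$ homogeneous of degree $n+1$. Writing an $m$-jet through the vertex as $A(t)=\sum_{i=1}^m A_it^i=t\,\tilde A(t)$ with $\tilde A(t)=\sum_{i=1}^m A_it^{i-1}$, homogeneity gives $\operatorname{Pf}(A(t))=t^{\,n+1}\operatorname{Pf}(\tilde A(t))$, so the jet lies in $Z_m$ exactly when $\operatorname{Pf}(\tilde A(t))\equiv 0\pmod{t^{\,m-n}}$. For $m\ge n$ these $m-n$ equations involve only $A_1,\dots,A_{m-n}$ and cut out precisely the jet scheme $Z_{m-n-1}$ in those variables, while $A_{m-n+1},\dots,A_m$ are free; hence $\pi_m\inv(\text{vertex})\cong Z_{m-n-1}\times\C^{nN}$ as schemes (and the fibre is all of $\C^{Nm}$ when $m\le n$). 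Using $\dim Z_{k}=(k+1)(N-1)$ this yields
\[
\codim_{Z_m}\pi_m\inv(\text{vertex})=(m+1)(N-1)-\bigl((m-n)(N-1)+nN\bigr)=N-n-1=2n(n+1),
\]
which is $\ge 4$ for all $n\ge 1$; for $m\le n$ the codimension is $N-m-1\ge 2n(n+1)$, so the bound only improves.

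I expect the main obstacle to be precisely this identification $\pi_m\inv(\text{vertex})\cong Z_{m-n-1}\times\C^{nN}$, i.e.\ extracting from the homogeneity of the Pfaffian that passing to the fibre over the vertex drops the jet order by $n+1$ and frees $nN$ coordinates; everything else is bookkeeping. Applying the displayed estimate to each slice quotient $Y$ (with $n$ replaced by $n'\le n$, the case $n'=n$ handling the vertex stratum of $Z$ itself) verifies the hypothesis of Theorem \ref{thm:ci}, which then gives that $Z_m$ is normal for all $m$ and that $p_m^*\colon\cO(Z_m)\ra\cO(V_m)^{G_m}$ is an isomorphism for $m=0,1,\dots,\infty$.
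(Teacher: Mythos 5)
Your proposal is correct and follows essentially the same route as the paper: reduce to Theorem \ref{thm:ci} via the Pfaffian description of $Z$ (the paper phrases it as vanishing of $\omega^{n+1}$, which is the same relation), identify the nontrivial slice representations as the same family with $n$ replaced by $n'<n$, and compute $\codim_{Z_m}\pi_m\inv(0)=\binom{2n+2}{2}-n-1=2n(n+1)\geq 4$ by recognizing the vertex fibre as $Z_{m-n-1}\times\C^{nN}$. Your use of homogeneity of $\operatorname{Pf}$ to make that last identification is just a cleaner writing of the paper's ``equations on $n+1$ of the $\omega_i$ with index sum at most $m$'' argument, so no substantive difference.
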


\begin{proof} The group $G$ preserves a canonical non-degenerate skew form on $\C^{2n}$.
If we use indices $1\leq i<j\leq k$ for pairs of copies of $\C^{2n}$, then the invariants of $V$ give a 2-form $\omega=\sum \omega_{ij} e_i\wedge e_j $ and the relations of the $\omega_{ij}$ are given by the vanishing of   $\omega^{n+1}\in\wedge^{2n+2}(\C^k)\otimes\C[\omega_{ij}]$. It follows that $V$ is coregular for $k\leq 2n+1$ and has too many relations for $k\geq 2n+3$. Thus we need only consider the hypersurface case $k=2n+2$.
The noncoregular slice representations of $V$ are of the same form as $V$ with $n$ replaced by a smaller $n'\geq 1$, modulo trivial representations.  We show that $Z_{m,0}$ has codimension 4 in $Z_m$ which, by Theorem \ref{thm:ci}, establishes our result.

First consider the case where $m$ is at least $n+1$. Let 
$$
\omega=\sum_{i=1}^mt^i\omega_i\in \pi_m\inv(p(0)).
$$ 
Here the $\omega_i$ are elements of $\bigwedge^2(\C^{2n+2})_i$. Then we have the  equations
$\omega_1^{n+1}=0$, $\omega_1^n\wedge \omega_2=0,\dots$.
The equations applied to $n+1$ of the $\omega_i$ such that the sum of the indices does not exceed $m$  gives a set isomorphic to $Z_{m-n-1}$, and there are no conditions on $\omega_{m-n+1},\ \omega_{m-n+2},\dots,\omega_m$. Now $Z$ has dimension $\binom{2n+2}2-1$ and it follows that the codimension of $\pi_m\inv(p(0))$ in $Z_m$ is
\begin{equation*}
\begin{split}
&((m+1)-(m-n))(\binom{2n+2}2-1)-n\binom{2n+2}2\\
&=\binom{2n+2}2-n-1=2n^2+2n\geq 4.
\end{split}
\end{equation*}
For the cases $m=0,\dots,n$ one gets that
$\codim \pi_m\inv(p(0))=\dim Z-m$ which is even better. This completes the proof.
\end{proof}

\begin{thm}
Let $(V,G)=(k\C^n+\ell(\C^n)^*,\GL_n)$, $n\geq 1$, $k\geq \ell$. Then $V$ is coregular if $\ell\leq n$, there are too many relations if $\ell\geq n+1$ and $k+\ell\geq 2n+3$  and $Z$ is a hypersurface if $k=\ell=n+1$. In this last case we have that $Z_m$ is normal for all $m$ and $p_m^*\colon \cO(Z_m)\to \cO(V_m)^{G_m}$ is an isomorphism for $m=0,1,\dots,\infty$.
\end{thm}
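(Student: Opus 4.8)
The plan is to marry the classical first fundamental theorem for $\GL_n$ with the jet-scheme criterion of Theorem \ref{thm:ci}. First I would record the invariant theory. By the FFT (Weyl \cite{We}), $\cO(V)^{\GL_n}$ is generated by the contractions $t_{ij}=\langle\phi_j,v_i\rangle$, where $v_1,\dots,v_k$ and $\phi_1,\dots,\phi_\ell$ are the vector and covector variables; arranging these into an $\ell\times k$ matrix $T$ identifies $Z=\quot V{\GL_n}$ with the generic determinantal variety $\{T:\rank T\le n\}$, of codimension $(k-n)(\ell-n)$ and with defining ideal minimally generated by the $\binom\ell{n+1}\binom k{n+1}$ minors of size $n+1$. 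Thus $Z$ is smooth (so $V$ is coregular) exactly when $\ell\le n$, when the rank condition is vacuous and $Z=\C^{k\ell}$. For $\ell\ge n+1$ I would compare the number of generators with the codimension via $\binom a{n+1}\ge a-n$, with equality iff $a=n+1$ (exhibit the $(n+1)$-subsets $\{1,\dots,n,j\}$), giving $\binom\ell{n+1}\binom k{n+1}\ge(k-n)(\ell-n)$ with equality iff $k=\ell=n+1$. Hence $Z$ is a hypersurface precisely when $k=\ell=n+1$ (a single minor, $\det T$), and has strictly too many relations, so is not a complete intersection, in every other case with $\ell\ge n+1$, i.e.\ when $k+\ell\ge 2n+3$.

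For the main assertion take $k=\ell=n+1$, so $Z=\{\det T=0\}\subset M_{(n+1)\times(n+1)}$ is a hypersurface. By Boutot \cite{Bo} the quotient $Z$ has rational singularities, and since it is a complete intersection, \cite{Mu} shows each $Z_m$ (for $m\le\infty$) is irreducible, reduced and lci of dimension $(m+1)\dim Z$; in particular $p_m^*$ is injective by Remark \ref{rem:goodandverygood}. It therefore suffices to prove that $Z_m$ is normal, which, $Z_m$ being Cohen--Macaulay, amounts to showing that $Z_{m,0}=Z_m\setminus Z_m'$ has codimension $\ge 2$ in $Z_m$, and then to invoke Theorem \ref{qiso} to conclude $\cO(V_m)^{G_m}=p_m^*\cO(Z_m')=p_m^*\cO(Z_m)$.

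The heart is the codimension estimate, which I would run exactly as in Theorem \ref{thm:ci}. The slice representations of $V$ are again of the same shape: the maximal proper isotropy group is $\GL_{n-1}$ with slice representation $n\,\C^{n-1}+n(\C^{n-1})^*$ (an analog of Lemma \ref{lem:maxpropersln}, provable from \cite{Sch} and \cite{We}), so all slice quotients are determinant hypersurfaces of smaller size and the induction in Theorem \ref{thm:ci} reduces everything to the fiber over the deepest point $0\in Z$. For this fiber write a jet $T(t)=\sum_{i=1}^m t^iT_i$ with $T_0=0$; factoring $T(t)=t\,S(t)$ with $S(t)=T_1+T_2t+\cdots+T_mt^{m-1}$ gives $\det T(t)=t^{n+1}\det S(t)$, so the defining condition $\det T(t)\equiv 0\pmod{t^{m+1}}$ is equivalent to $\det S(t)\equiv0\pmod{t^{m-n}}$. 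This last condition depends only on $T_1,\dots,T_{m-n}$ and is precisely the equation of $Z_{m-n-1}$, while $T_{m-n+1},\dots,T_m$ remain free, so $\pi_m\inv(0)\cong Z_{m-n-1}\times\C^{n(n+1)^2}$ for $m\ge n+1$ (and the fiber is even smaller for $m\le n$). A short computation then gives
$$
\codim_{Z_m}\pi_m\inv(0)=(n+1)\big((n+1)^2-1\big)-n(n+1)^2=n(n+1)\ge 2,
$$
and the identical estimate for the smaller determinant hypersurfaces feeds the induction.

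The main obstacle is that $\GL_n$ is not semisimple, so it carries the nontrivial characters $\det^d$ and the hypothesis ``every semi-invariant is invariant'' used in Theorems \ref{qiso} and \ref{thm:ci} fails here (for instance, any $n\times n$ minor of the $v_i$ is a semi-invariant of weight $\det$). I would therefore verify directly the two consequences of that hypothesis that the argument actually uses. First, the first nonprincipal stratum of $Z$ is the corank-$2$ locus, which has codimension $3$, so there is no codimension-one stratum and the schematic-reducedness input is vacuous. Second, $V\setminus V'$ has codimension $\ge 2$, since $dp$ drops rank exactly where the $v_i$ fail to span $\C^n$ or the $\phi_j$ fail to span $(\C^n)^*$, each a determinantal locus of codimension $2$. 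With these in hand the proof of Theorem \ref{qiso} goes through verbatim, and combining it with the normality of $Z_m$ yields that $p_m^*\colon\cO(Z_m)\to\cO(V_m)^{G_m}$ is an isomorphism for all $m\le\infty$.
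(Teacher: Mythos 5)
Your proof is correct and follows essentially the same route as the paper: identify $Z$ via the first fundamental theorem, count the $(n+1)$-minors against the codimension $(k-n)(\ell-n)$ to isolate the hypersurface case $k=\ell=n+1$, and then bound $\codim_{Z_m}\pi_m^{-1}(p(0))$ by $n(n+1)\ge 2$ (your factorization $\det(tS(t))=t^{n+1}\det S(t)$ is a clean way to obtain exactly the paper's fiber description $Z_{m-n-1}\times\C^{n(n+1)^2}$) so that Theorem \ref{thm:ci} applies. Your explicit check that the hypothesis ``every semi-invariant is invariant'' used in Theorems \ref{thm:ci} and \ref{qiso} can be bypassed for $\GL_n$ --- there are no codimension-one strata, and $V\setminus V'$ has codimension at least $2$ --- addresses a point the paper passes over in silence and is a worthwhile addition.
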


\begin{proof}
The invariants are just the contractions $\alpha_{ij}$, $1\leq i \leq k$, $1\leq j\leq \ell$ and the relations are $\det_{r,s=1}^{n+1}\alpha_{i_r,j_s}=0$ where the $i_r$ are distinct and the $j_s$ are distinct. This shows that $V$ is coregular for $\ell\leq n$ and that $Z$ is not a complete intersection for $\ell\geq n+2$ or $\ell=n+1$ and $k>n+1$. We now consider the case $k=\ell=n+1$. Then all the nontrivial slice representations have the same form, with $n$ replaced by a smaller $n'\geq 1$. The codimension of $p(0)$ in $Z_0$ is $\dim Z=(n+1)^2-1\geq 3$. For $m\geq n+1$ one can use the techniques as above to show that the codimension of $\pi_m\inv(p(0))$ in $Z_m$ is at least 
\begin{equation*}
\begin{split}
&\dim Z_m-\dim Z_{m-n-1}-n(n+1)^2\\
&=((m+1)-(m-n))((n+1)^2-1)-n(n+1)^2=n(n+1)\geq 2.
\end{split}
\end{equation*}
For $1\leq m\leq n$ the codimension is at least as great.  Thus, as before, we find that $Z_m$ is normal for all $m$ and the theorem follows.
\end{proof}

\begin{thm}
Let $(V,G)=(k\C^n,\SO_n)$, $n\geq 2$. Then $V$ is coregular for $k<n$ and has too many relations if $k>n$. If $k=n$, then $Z$ is a hypersurface and $Z_m$ is normal for all $m$. Hence $p_m^*\colon \cO(Z_m)\to \cO(V_m)^{G_m}$ is an isomorphism for $m=0,1,\dots,\infty$.
\end{thm}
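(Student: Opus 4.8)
The plan is to compute the invariant ring, read off the slice representations, and reduce everything to Theorem \ref{thm:ci}. The first fundamental theorem for $\SO_n$ (Weyl \cite{We}) gives that $\cO(V)^G$ is generated by the quadratic invariants $q_{ij}=\langle v_i,v_j\rangle$ ($1\le i\le j\le k$) together with the $n\times n$ determinants $[i_1\cdots i_n]=\det(v_{i_1},\dots,v_{i_n})$. For $k<n$ no determinants occur, the $\binom{k+1}2$ functions $q_{ij}$ are algebraically independent, and $\binom{k+1}2=kn-\binom n2+\binom{n-k}2=\dim Z$, so $V$ is coregular. For $k>n$ a generator count against $\dim Z=kn-\binom n2$ shows, as in the $\SL_n$ discussion, that there are too many relations. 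For $k=n$ there is a single determinant $\Delta$ and a single relation, the Gram identity $\Delta^2=\det Q$ with $Q=(q_{ij})$; hence $Z$ is the hypersurface $\{\Delta^2=\det Q\}$ of dimension $\binom{n+1}2$, in particular a complete intersection. For $n\ge 3$ the group $\SO_n$ is semisimple, so every semi-invariant is invariant and the standing hypotheses of Theorem \ref{thm:ci} are in force; the remaining case $n=2$ is $\SO_2\cong\C^*$, which I would treat directly, noting it also falls under the torus analysis of Section 5.

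Next I would determine the slice representations. At a closed orbit the vectors span a nondegenerate subspace $U$ of some dimension $d$, the isotropy group is $H=\SO(U^\perp)\cong\SO_{n-d}$, and as an $H$-module $V=n\cdot U\oplus n\cdot U^\perp$ with $H$ acting trivially on $U$. Computing the orbit tangent $\gg\cdot v$, its component in the trivial summand $n\cdot U$ comes from the block of $\gg$ preserving $U$, while the block $\Hom(U,U^\perp)$ accounts for exactly $d$ of the $n$ copies of $U^\perp$; hence the nontrivial part of the slice representation is $(n-d)\,\C^{n-d}$ over $\SO_{n-d}$. Every noncoregular slice is therefore again of the form $(n'\C^{n'},\SO_{n'})$ with $n'=n-d<n$. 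By Boutot \cite{Bo} the quotient $Z$ has rational singularities, so by Mustata \cite{Mu} each $Z_m$ is irreducible, reduced and a complete intersection; normality of $Z_m$ is thus equivalent to $\codim_{Z_m}Z_{m,0}\ge 2$, and by Theorem \ref{thm:ci} it suffices to show, for each hypersurface $(n'\C^{n'},\SO_{n'})$ with $2\le n'\le n$, that the fiber $\pi_m\inv(p(0))$ over the origin has codimension at least $2$ in $Z_m$ for all $m$ (the intersection with $Z_{m,0}$ being a subset).

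The main obstacle is this last codimension estimate, which—in contrast to the symplectic case—does not reduce cleanly to a jet scheme of $Z$ at a lower level. Writing an arc through the origin as $(Q(t),\Delta(t))$ with $Q(t),\Delta(t)=O(t)$, the fiber is cut out by $\Delta(t)^2\equiv\det Q(t)\pmod{t^{m+1}}$. Because $\Delta$ enters quadratically whereas $\det Q$ has degree $n$, the scaling $Q=t\tilde Q$, $\Delta=t\tilde\Delta$ produces $\tilde\Delta^2\equiv t^{\,n-2}\det\tilde Q\pmod{t^{m-1}}$, and the leftover factor $t^{\,n-2}$ (which disappears only when $n=2$) is exactly what obstructs the symplectic-style identification with a lower $Z_{m'}$. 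Instead I would project to the $Q$-arcs and stratify by $\nu:=\operatorname{ord}_t\det Q(t)$: a solution in $\Delta$ exists only when $\nu$ is even, and then the $\Delta$-fiber has dimension $\nu/2$. The dominant stratum is $\nu=n$ when $n$ is even and $\nu=n+1$ (forced by $\det Q_1=0$) when $n$ is odd, giving codimension $n^2/2$, respectively $(n^2+1)/2$, each at least $2$; moving to a deeper stratum raises the $\Delta$-fiber dimension by $1$ but imposes two further vanishing conditions on the coefficients of $\det Q(t)$, so deeper strata are dominated. The point needing genuine care is the independence of those vanishing conditions, which governs the codimension of each stratum in the $Q$-arc space. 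For $n=2$ the factor is absent and the fiber is a copy of $Z_{m-2}$ with two free coordinates, of codimension exactly $2$, and the origin is the only noncoregular stratum. Granting the bound, Theorem \ref{thm:ci} yields that $Z_m$ is normal for all $m$ and that $p_m^*\colon\cO(Z_m)\to\cO(V_m)^{G_m}$ is an isomorphism for $m=0,1,\dots,\infty$.
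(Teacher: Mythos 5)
Your overall architecture coincides with the paper's: first and second fundamental theorems for $\SO_n$ to get the coregular range, the too-many-relations range, and the hypersurface $\Delta^2=\det Q$ at $k=n$; identification of the nontrivial slice representations as $(n'\C^{n'},\SO_{n'})$ with $n'<n$; and reduction via Theorem \ref{thm:ci} to showing $\pi_m\inv(p(0))$ has codimension at least $2$ in $Z_m$. Your dominant-stratum codimensions $n^2/2$ (for $n$ even) and $(n^2+1)/2$ (for $n$ odd) are exactly the numbers the paper obtains, and your parity observation (solvability of $\Delta(t)^2=\det Q(t)$ forces $\operatorname{ord}_t\det Q(t)$ even, hence $\det Q_1=0$ when $n$ is odd) is the same mechanism as the paper's forced vanishing $d_1=\dots=d_k=0$. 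Your aside on $n=2$ is a legitimate extra caution: $\SO_2\cong\C^*$ has nontrivial semi-invariants, so the standing hypothesis of Theorem \ref{thm:ci} does not literally apply there, and routing that case through Section 5 (for $m=\infty$) or a direct check is reasonable.

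The one substantive incompleteness is the point you yourself flag: bounding the non-dominant strata of your fibration over the $Q$-arcs. Your ``$+1$ fiber dimension versus $2$ new conditions'' bookkeeping only wins if the successive coefficients of $\det Q(t)$ cut the contact loci $\{\operatorname{ord}_t\det Q(t)\ge\nu\}$ down by (close to) the expected amount, and as written nothing guarantees this; in principle a deep stratum could dominate. The paper avoids the stratification altogether: after eliminating the forced $d_i$, it identifies the top-dimensional structure of $\pi_m\inv(p(0))$ directly as (a copy of) $Z_{m-n}$ times an affine space, so its dimension is pinned down by Mustata's theorem applied to $Z$ itself and no independence-of-conditions question arises. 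If you want to keep your stratified argument, the missing step can be closed by applying the same Mustata machinery one level down: the symmetric determinantal hypersurface $\{\det Q=0\}$ is a normal local complete intersection with rational singularities, so its jet schemes are irreducible of the expected dimension, which gives $\codim\{\operatorname{ord}_t\det Q(t)\ge\nu\}=\nu$ in the arc space of symmetric matrices and hence the required bound on every stratum. Either supply that argument or reorganize the count as the paper does; otherwise the estimate is only verified on the open stratum.
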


\begin{proof} We leave it to the reader to show that there are too many relations if $k>n$ and that $V$ is coregular for $k<n$. Consider the case that $k=n$. The invariants of $V$ are generated by the inner products $\alpha_{ij}$, $1\leq i,j\leq n$ and the determinant  $d$. The relation  is $\det_{i,j=1}^n\alpha_{ij}=d^2$. 
The nontrivial slice representations of $V$ are just those for $n$ replaced by $n'$ where $2\leq n'<n$, up to trivial factors, so it is enough to show that  $Y_m:=\pi_m\inv(p(0))$ has codimension at least 2 in $Z_m$ for $m\geq 0$. Let $\alpha(t)=\sum_{i=1}^m t^i\alpha_i$ and $d(t)=\sum_{i=1}^m t^id_i$ be elements of $Y_m$ where $m\geq n$. 
For now suppose that $n=2\ell$ is even. Then the equations $\det(\alpha(t))=d(t)^2$ force $d_1=\dots=d_{\ell-1}=0$. The  $n$th equation is $\det(\alpha_1)=d_{\ell}^2$ and one can see that the dimension of $Y_m$ is $\dim Z_{m-n}+(n-1)(\dim Z+1)-(n-2)/2$ so that the codimension of $Y_m$ in $Z_m$ is
$$
\dim Z-(n-1)+(n-2)/2=\frac 12 n(n+1)-n+1+\frac 12 (n-2)=\frac 12n^2
$$
which is at least $2$. If $m<n$, then we get the estimate that $\codim Y_m\leq \dim Z-m+[\frac m2]$ which is even better.

Now we consider the case that $n=2\ell+1$ is odd.
Then looking at the coefficient of $t^n$ we get the equation $\det(\alpha_1)=0$ since the equations with lower powers of $t$ force $d_1=\dots=d_{\ell}=0$. The coefficient of $t^{n+1}$ gives an inhomogeneous equation for $\alpha_1$ and $\alpha_2$ with right hand side $d_{\ell+1}^2$. But the solution space of the inhomogeneous equation has dimension at most that of the homogeneous equation. Hence we can estimate the dimension of $Y_m$ by replacing the inhomogeneous equations by the homogeneous equations. Thus we get the estimate $\dim Y_m=\dim Z_{m-n}+(n-1)(\dim Z+1)-(n-1)/2$ and for the codimension we get
$$
\dim Z-(n-1)+\frac{n-1}2=\frac 12 n(n+1)-n+1+\frac 12(n-1)=\frac 12 n^2+\frac 12.
$$  
As before, the estimates for $0\leq m<n$ are even better, so $Y_m$ has codimension 2 and $Z_m$ is normal.
\end{proof}

There is one case left, which needs no new techniques.

\begin{thm}
Let $(V,G)=(n\C^n+n(\C^n)^*,\SL_n)$, $n\geq 2$. Then $Z_m$ is normal and $p_m^*\colon \cO(Z_m) \to \cO(V_m)^{G_m}$  is an isomorphism for $m=0,1,\dots, \infty$.
\end{thm}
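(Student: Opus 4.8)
The plan is to apply Theorem \ref{thm:ci}, exactly as in the preceding complete-intersection cases. First I would record the invariant theory: writing the $n$ vectors as the columns of a matrix $V$ and the $n$ covectors as the rows of a matrix $W$, the first and second fundamental theorems for $\SL_n$ (Weyl \cite{We}) show that $\cO(V)^{\SL_n}$ is generated by the $n^2$ contractions $\alpha_{ij}=\langle w_j,v_i\rangle$ together with the two determinants $d=\det V$ and $d'=\det W$, subject to the single relation $\det(\alpha_{ij})=d\,d'$ (which is just $\det(WV)=\det W\det V$). Since this one relation already cuts the polynomial ring in $n^2+2$ generators down to dimension $n^2+1=\dim V-\dim\SL_n=\dim Z$, it exhibits $Z$ as a hypersurface, hence a complete intersection. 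As $\SL_n$ is semisimple every semi-invariant is invariant, so the hypotheses of Theorem \ref{thm:ci} are in force; by Boutot \cite{Bo} $Z$ has rational singularities, and by \cite{Mu} each $Z_m$ is irreducible, reduced and of dimension $(m+1)(n^2+1)$. By Lemma \ref{lem:maxpropersln} every nontrivial slice representation is again of the form $(n'\C^{n'}+n'(\C^{n'})^*,\SL_{n'})$ with $2\le n'<n$ (up to trivial factors; for $n=2$ one reduces to $(4\C^2,\SL_2)$, the $\Sp_2$ case), so an induction on $n$ reduces everything to a single estimate: that the fiber $\pi_m\inv(p(0))$ over the origin has codimension at least $2$ in $Z_m$ for all $m$.

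For the codimension estimate I would write a point of $\pi_m\inv(p(0))$ as $\alpha(t)=\sum_{i=1}^m t^i\alpha_i$, $d(t)=\sum_{i=1}^m t^i d_i$, $d'(t)=\sum_{i=1}^m t^i d'_i$ (all constant terms vanish since we sit over $0\in Z$), subject to the arc relation $\det\alpha(t)\equiv d(t)d'(t)\pmod{t^{m+1}}$. The key observation is that $\det\alpha(t)$ has order $\ge n$ in $t$, so comparing coefficients of $t^2,\dots,t^{n-1}$ forces the ``zero-divisor'' condition $d(t)d'(t)\equiv 0\pmod{t^n}$ on the pair $(d,d')$, while the coefficients of $t^n,\dots,t^m$ impose the matching of $\det\alpha(t)$ to $d(t)d'(t)$ in those degrees. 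I would treat the case $m\ge n$; the cases $0\le m<n$ give an even larger codimension, as in the $\SO_n$ argument.

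The count then proceeds in two stages. The zero-divisor locus $\{d(t)d'(t)\equiv 0\pmod{t^n}\}$ in the $(d_i,d'_i)$-space $\C^{2m}$ has dimension $2m-(n-2)$: on the top stratum $\operatorname{ord} d=a\le n-1$ one needs $\operatorname{ord} d'\ge n-a$, giving dimension $(m-a+1)+(m-n+a+1)=2m-n+2$ independently of $a$. Fixing such a pair and setting $h(t)=d(t)d'(t)$ (of order $\ge n$), the remaining equations match $\det\alpha(t)$ to $h(t)$ in degrees $n,\dots,m$. On the open locus where $\alpha_1$ is invertible these are independent: the coefficient of $t^s$ in $\det\alpha(t)$ is $\det\alpha_1$ for $s=n$, and $\tr(\operatorname{adj}(\alpha_1)\,\alpha_{s-n+1})$ plus a function of $\alpha_1,\dots,\alpha_{s-n}$ for $s>n$, so since $\operatorname{adj}(\alpha_1)$ is invertible one solves successively for a coordinate of $\alpha_1,\alpha_2,\dots,\alpha_{m-n+1}$, cutting the $\alpha$-space $\C^{mn^2}$ by exactly $m-n+1$. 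Adding the two stages,
$$
\dim\pi_m\inv(p(0))=(2m-n+2)+\bigl(mn^2-(m-n+1)\bigr)=m(n^2+1)+1,
$$
so $\codim_{Z_m}\pi_m\inv(p(0))=(m+1)(n^2+1)-\bigl(m(n^2+1)+1\bigr)=n^2\ge 4$. Theorem \ref{thm:ci} then yields that $Z_m$ is normal and $p_m^*$ an isomorphism for $m=0,1,\dots,\infty$.

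The step that needs genuine care — the main obstacle — is the upper bound on $\dim\pi_m\inv(p(0))$: the two-stage count is transparent on the locus $\{\alpha_1\ \text{invertible}\}$, but one must check that the degenerate stratum $\{\det\alpha_1=0\}$ (and, in the first stage, the loci of higher order for $d$ or $d'$) contributes no component of larger dimension. Here the large slack is decisive: I only need codimension $\ge 2$, whereas the generic locus already gives $n^2\ge 4$, so it suffices to bound the degenerate strata crudely, exactly as the homogeneous-versus-inhomogeneous comparison does in the $\SO_n$ case.
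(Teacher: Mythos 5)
Your setup and reduction are exactly the paper's: the invariants are the contractions $\alpha_{ij}$ and the two determinants with the single relation $\det(\alpha_{ij})=dd'$, so $Z$ is a hypersurface; the nontrivial slice representations have the same form with $n$ replaced by $n'<n$; and everything reduces via Theorem \ref{thm:ci} (plus Boutot and Mustata) to showing that $\pi_m\inv(p(0))$ has codimension at least $2$ in $Z_m$. The paper's own proof is only the sentence ``as above, one can show that $\pi_m\inv(p(0))$ always has codimension at least $4$,'' and your two-stage count (the zero-divisor locus $d(t)d'(t)\equiv 0\bmod t^n$ of dimension $2m-n+2$, then the triangular solve on the $\alpha$-side cutting $m-n+1$ coordinates) correctly produces codimension $n^2\geq 4$ on the generic stratum, in agreement with the paper's claim.

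The one genuine gap is the point you flag yourself, and your proposed remedy for it is not adequate as stated. You say the degenerate strata can be ``bounded crudely'' because the slack $n^2-2$ is large; but the slack is a constant while the potential loss grows with $m$: on the locus $\rank\alpha_1\leq n-2$ one gains only codimension $4$ in $\alpha_1$-space, yet $\operatorname{adj}(\alpha_1)=0$ there, so \emph{every} one of the $m-n$ equations $\tr(\operatorname{adj}(\alpha_1)\alpha_{k+1})+g_k(\alpha_1,\dots,\alpha_k)=h_{n+k}$ loses its solvable leading term, and a crude count no longer cuts the $\alpha$-space by $m-n+1$. A clean way to close this: let $\Phi$ send $(\alpha_1,\dots,\alpha_m)$ to the coefficients of $t^n,\dots,t^m$ in $\det\alpha(t)$. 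The fiber $\Phi\inv(0)$ is, via the rescaling $\det(t\alpha_1+\cdots)=t^n\det(\alpha_1+t\alpha_2+\cdots)$, isomorphic to $D_{m-n}\times\C^{(n-1)n^2}$, where $D\subset\C^{n^2}$ is the determinantal hypersurface; since $D$ is an irreducible hypersurface with rational singularities, Mustata gives that $D_{m-n}$ is irreducible of dimension $(m-n+1)(n^2-1)$, so $\Phi\inv(0)$ is irreducible of dimension exactly $mn^2-(m-n+1)$, the generic fiber dimension. The locus where the local fiber dimension of $\Phi$ exceeds this value is closed and stable under $\alpha\mapsto\lambda\alpha$ (as $\Phi(\lambda\alpha)=\lambda^n\Phi(\alpha)$), so if nonempty it would contain the origin and force $\dim_0\Phi\inv(0)$ to exceed $mn^2-(m-n+1)$, a contradiction. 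Hence \emph{every} fiber of $\Phi$ has dimension at most $mn^2-(m-n+1)$, your two-stage sum bounds all of $\pi_m\inv(p(0))$ rather than just its generic part, and the codimension is $n^2\geq 4$ as claimed. With this repair (which the paper also leaves implicit), your argument is complete and coincides with the intended proof.
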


\begin{proof}
The generators of $\cO(V)^G$ are the contractions $\alpha_{ij}$ and the determinants $d$ and $e$ of the $n$ copies of $\C^n$ and its dual. The relation is $\det(\alpha_{ij})=de$. The nontrivial slice representations, up to trivial factors, have the same form, with $n$ replaced by a smaller $n'\geq 2$. As above, one can show that $\pi_m\inv(p(0))$   always has codimension at least $4$ in $Z_m$ which gives the result.
\end{proof}

\section{Representations of $\C^*$}\label{sec:cstar}
 
The main theorem of this section is the following.

\begin{thm}\label{thm:cstar}
Let $G=\C^*$ and let $V$ be a $G$-module all of whose weights are $\pm 1$. Then $p_\infty^*\colon \cO(Z_\infty)\to\cO(V_\infty)^{G_\infty}$ is an isomorphism and $\cO(Z_\infty)$ is integrally closed.
\end{thm}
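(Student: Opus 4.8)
The plan is to reduce everything to the determinantal geometry of the Segre cone and then isolate the one genuinely arc-theoretic input. Up to the outer automorphism $t\mapsto t\inv$ of $\C^*$ we may write $V=a\,\C_{+1}\oplus b\,\C_{-1}$, with coordinates $x_1,\dots,x_a$ of weight $+1$ and $y_1,\dots,y_b$ of weight $-1$, and $a\ge b\ge 0$. If $b=0$ then $Z=\quot V{\C^*}$ is a point and one checks as in the non-stable case of Theorem \ref{thm:dim1} that $\cO(V_\infty)^{G_\infty}=\C$; if $a=b=1$ then $\dim Z=1$, $\NN(V)=\{x=0\}\cup\{y=0\}$ is schematically reduced, and the claim is Theorem \ref{thm:dim1} together with Remark \ref{rem:goodandverygood}. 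So I would assume $a+b\ge 3$ and $b\ge 1$. Here $\cO(V)^{\C^*}=\C[z_{ij}]$ with $z_{ij}=x_iy_j$, the ideal of relations being generated by the $2\times 2$ minors, so $Z$ is the cone of $a\times b$ matrices of rank $\le 1$: normal and Cohen--Macaulay, but for $a,b\ge 2$ with $(a,b)\neq(2,2)$ \emph{not} a complete intersection, so Theorem \ref{thm:ci} and Musta\c{t}\u{a}'s irreducibility theorem are unavailable.

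Next I would exploit the principal locus. Let $V'=(\C^a\setminus 0)\times(\C^b\setminus 0)$ and $Z'=Z\setminus\{0\}$, the rank-exactly-one matrices. The $\C^*$-action on $V'$ is free with closed orbits, and since $\C^*$ is special, $p\colon V'\to Z'$ is a Zariski-locally trivial principal $\C^*$-bundle; applying the jet/arc functor as in Lemma \ref{lem:slices} shows $p_\infty\colon V'_\infty\to Z'_\infty$ is a principal $G_\infty$-bundle, whence $\cO(V'_\infty)^{G_\infty}=\cO(Z'_\infty)$, and $Z'_\infty$ is smooth because $Z'$ is. Granting the normality input below, the proof then assembles formally: given $f\in\cO(V_\infty)^{G_\infty}$ its restriction to $V'_\infty$ equals $p_\infty^*h$ for a unique $h\in\cO(Z'_\infty)$; if $Z_\infty$ is normal and $Z_\infty\setminus Z'_\infty=\psi_0\inv(0)$ has codimension $\ge 2$, then $h$ extends to $\cO(Z_\infty)$ by Hartogs, so $f=p_\infty^*h$ on the dense open $V'_\infty$ and hence on $V_\infty$; thus $p_\infty^*$ is surjective, and normality forces $Z_\infty$ to be reduced, which with irreducibility (Kolchin) and generic injectivity over $Z'_\infty$ makes $p_\infty^*$ injective.

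Two structural remarks show that normality of $Z_\infty$ is the \emph{only} real issue. First, $\cO(V_\infty)=\C[x_i^{(k)},y_j^{(l)}]$ is a normal domain, and the invariants of any group acting on a normal domain are integrally closed in their fraction field (an integral element of $\operatorname{Frac}(A^G)$ is integral over $A$, hence in $A$, and is $G$-fixed); therefore $A:=\cO(V_\infty)^{G_\infty}$ is an integrally closed domain. Second, once $p_\infty^*$ is surjective, $\cO(Z_\infty)\twoheadrightarrow A$ with $A$ reduced, so its kernel is the nilradical and $\cO(Z_\infty)_{\mathrm{red}}\cong A$ is \emph{automatically} integrally closed. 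Thus the assertion that $\cO(Z_\infty)$ is integrally closed, and the isomorphism statement, are together equivalent to the single fact that $Z_\infty$ carries no nilpotents. (As an independent route to surjectivity, one can compute $A$ directly: the first fundamental theorem for $\GL_1$ gives $\cO(V_\infty)^{\C^*}=\C[x_i^{(k)}y_j^{(l)}]/(\text{rank}\le 1)$, on which the pro-unipotent part $U_\infty=\{u\equiv 1 \bmod t\}$ scales $X_i(s)Y_j(t)$ by $u(s)/u(t)$, trivial on the diagonal $s=t$; a gauge-fixing on each chart $\{x_i^{(0)}\neq 0\}$, $\{y_j^{(0)}\neq 0\}$ identifies the $U_\infty$-invariants with the algebra generated by the diagonal coefficients $w_{ij}^{(k)}:=D^k(x_iy_j)$, and the complementary locus has codimension $a+b\ge 2$ in the smooth space $V_\infty$.)

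The main obstacle, then, is reducedness--indeed normality--of the arc space $Z_\infty$ of the Segre cone, and this is exactly why the theorem is stated only for $m=\infty$, the finite $Z_m$ being in general reducible. Since $Z$ is not locally a complete intersection, I cannot invoke Theorem \ref{thm:ci} or Musta\c{t}\u{a}, so I would argue normality at the level of $Z_\infty$ through Serre's criterion. Regularity in codimension one holds because the singular locus of $Z_\infty$ is contained in $\psi_0\inv(0)=Z_\infty\setminus Z'_\infty$, whose codimension I would bound below by $2$ with a jet-dimension count over the origin in the spirit of the estimates in Theorems \ref{thm:ci}--\ref{thm:p=n+1sln}. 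The delicate point is the $S_2$ property (equivalently, for reducedness alone, that $Z_\infty$ has no embedded associated primes); here I expect to use the Cohen--Macaulay and rational-singularity structure of the determinantal variety $Z$, passing either through a resolution of the Segre cone or through a squarefree Gr\"obner degeneration of the $2\times 2$ minors to control the associated primes of the differential ideal $\langle D^k(2\times 2\text{ minors})\rangle$. Once $Z_\infty$ is shown reduced, the surjection onto the normal domain $A$ is an isomorphism and $\cO(Z_\infty)$ is integrally closed, completing the proof.
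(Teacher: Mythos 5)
Your reduction of the theorem to a single question is accurate and well organized: you correctly identify that over the principal locus the invariants are pulled back from $Z'_\infty$, that surjectivity plus reducedness of $Z_\infty$ would give the isomorphism, that $\cO(V_\infty)^{G_\infty}$ is automatically an integrally closed domain, and that neither Theorem \ref{thm:ci} nor Musta\c{t}\u{a}'s theorem is available because the Segre cone is not a complete intersection. But the proof stops exactly at the point where the theorem's content lies. Reducedness (a fortiori normality) of $Z_\infty$ is asserted to follow from Serre's criterion, with the $S_1/S_2$ input ``expected'' to come from a resolution of the Segre cone or a squarefree Gr\"obner degeneration of the differential ideal $\langle D^kR_{ab,cd}\rangle$. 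None of this is carried out, and it is not routine: $\cO(Z_\infty)$ is not Noetherian, so Serre's criterion and ``codimension $\ge 2$'' arguments cannot be applied off the shelf; the finite-level schemes $Z_m$ give no help since the lci hypothesis of Musta\c{t}\u{a}'s theorem fails; and establishing that the differential ideal has a squarefree initial ideal is precisely equivalent to the hard combinatorial fact you are trying to avoid. (Two smaller slips: the complement of $V'_\infty$ in $V_\infty$ has codimension $\min(a,b)$, not $a+b$, which is $1$ when $b=1$ --- those cases are coregular and must be routed through Corollary \ref{cor:coregii} instead; and your Hartogs extension of $h$ from $Z'_\infty$ to $Z_\infty$ presupposes the very normality you have not yet proved, so the surjectivity and reducedness arguments are circularly entangled as written.)

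For comparison, the paper's proof supplies exactly this missing content by explicit standard monomial theory: it shows that the relations $D^kR_{ab,cd}$ act as straightening laws, so the standard monomials in the $f_{ij}^{(k)}$ span $\cO(Z_\infty)$; it then constructs an injective leading-term map $w\mapsto L(w)$ from standard monomials to ``admissible'' words in $\cO(V_\infty)$ with $L(w)=M(p_\infty^*(w))$, which proves the standard monomials are linearly independent and that $p_\infty^*$ is injective --- hence $Z_\infty$ is reduced; finally, localizing at $f_{11}=x_1y_1$ (a non-zerodivisor by the same leading-term argument) shows every invariant has admissible leading term, and a descending induction on leading terms gives surjectivity. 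If you want to complete your geometric route, you would need to actually prove that the differential ideal of the $2\times 2$ minors is radical in $\C[x_i^{(k)}y_j^{(l)}\,;\,i,j,k,l]$; the paper's combinatorics is, in effect, the proof of that statement.
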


\begin{cor}
Let $V$ be a $G=\C^*$ module with nonzero positive weights $r_1\leq\dots\leq  r_d$, $d\geq 1$, and nonzero negative weights $-s_1\leq\dots\leq -s_e$, $e\geq 1$, where we may assume that the greatest common divisor  of the $r_i$ and $s_j$ is one. Then $p_\infty^*$ is surjective only in the following cases.
\begin{enumerate}
\item $d=1$ and $r_1=1$ or  $e=1$ and $s_1=1$. Here $Z'=Z$ so that $p_m^*$ is an isomorphism for all $m$.
\item All the $r_i$ and $s_j$ are $1$.
\end{enumerate}
\end{cor}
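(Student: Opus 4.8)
The plan is to prove both implications: that $p_\infty^*$ is surjective in cases (1) and (2), and that outside these cases it fails to be surjective. Throughout I write $x_1,\dots,x_d$ for coordinates of weights $r_1,\dots,r_d$ and $y_1,\dots,y_e$ for coordinates of weights $-s_1,\dots,-s_e$, and I exploit the automorphism $t\mapsto t^{-1}$ of $\C^*$, which negates all weights and hence interchanges the two lists while preserving the good/bad dichotomy. Recall also that $\infty$-good implies $D$-finite, so that any representation which is not $D$-finite is automatically $\infty$-bad; this lets me convert the ``not $D$-finite'' conclusions of Theorem~\ref{thm:badfinite} and its corollary into $\infty$-badness.

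For sufficiency, case (2) is exactly Theorem~\ref{thm:cstar}. For case (1), say $e=1$ and $s_1=1$ (the case $d=1$, $r_1=1$ being symmetric): the invariants $u_i=x_i y_1^{r_i}$ are algebraically independent, so $\quot VG\cong\C^d$ and $V$ is coregular. Because $y_1$ has weight $-1$, every closed orbit meeting $\{y_1\neq 0\}$ has trivial isotropy, while the origin has isotropy $\C^*$; thus $\quot VG$ has only the principal stratum and the point $\{0\}$, the latter of codimension $d$. If $d\geq 2$ there is no codimension one stratum and Corollary~\ref{cor:coreg} applies vacuously; if $d=1$ the component $\{x_1=0\}$ of $\NN(V)$ appears with multiplicity one, hence is schematically reduced, and Corollary~\ref{cor:coreg} again yields that $V$ is very good. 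Since $dp$ has maximal rank on $\{y_1\neq 0\}$, whose image is all of $\quot VG$, we indeed have $Z'=Z$.

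For necessity I argue contrapositively, producing an $\infty$-invariant outside the image whenever neither (1) nor (2) holds. Two reductions cut the problem down. First, if some pair has $r_i\nmid s_j$ and $s_j\nmid r_i$, then the subrepresentation $\C_{r_i}\oplus\C_{-s_j}$ has one-dimensional quotient and, with $g=\gcd(r_i,s_j)$, null cone components of multiplicities $s_j/g,r_i/g>1$; by Theorem~\ref{thm:dim1} it is bad, the explicit obstruction $(Dp)^2/p$ (for $p$ the generating invariant) lying in $\cO((\C_{r_i}\oplus\C_{-s_j})_\infty)^{\C^*_\infty}$ but not in the differential algebra generated by $p$. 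By Lemma~\ref{directsum}, $V$ is $\infty$-bad. Second, if some pair has $g=\gcd(r_i,s_j)>1$, then since the overall gcd is $1$ there is another coordinate of weight $w$ with $g\nmid w$; the closed orbit with only $x_i,y_j$ nonzero has finite isotropy $\mu_g$ acting nontrivially on that coordinate, so by the corollary following Theorem~\ref{thm:badfinite} $V$ is not $D$-finite, hence $\infty$-bad. Combining, if $V$ is $\infty$-good then for every pair $\gcd(r_i,s_j)=1$ and one of $r_i,s_j$ divides the other, forcing $r_i=1$ or $s_j=1$ for each pair, and therefore all $r_i=1$ or all $s_j=1$.

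By symmetry assume all $s_j=1$. If $e=1$ we are in case (1) and if all $r_i=1$ in case (2), so the remaining case is $e\geq 2$ with some $r:=r_{i_0}>1$. The key step, which I expect to be the main obstacle, is to show the subrepresentation $W=\C_r\oplus 2\C_{-1}$ (coordinates $x$ of weight $r$ and $y_1,y_2$ of weight $-1$) is $\infty$-bad; Lemma~\ref{directsum} then finishes. Here $\cO(\quot WG)$ is generated by $u_k=x\,y_1^{r-k}y_2^k$ ($0\leq k\leq r$), so $p_\infty^*\cO((\quot WG)_\infty)$ is the differential algebra they generate. I will use the covariant $w_r=x\,y_1^{r-2}\bigl(y_1 y_2^{(1)}-y_2 y_1^{(1)}\bigr)$, which is the constant term of the genuinely $G_\infty$-invariant power series $x(t)\,y_1(t)^{r-2}\bigl(y_1(t)y_2'(t)-y_2(t)y_1'(t)\bigr)$ and so is a $G_\infty$-invariant of $x$-degree one and weight one. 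Every image element of $x$-degree one and weight one is a linear combination of the $Du_k$ (products raise the $x$-degree), and each $Du_k$ contains the term $x^{(1)}y_1^{r-k}y_2^k$, whereas $w_r$ has no $x^{(1)}$; linear independence of the $y_1^{r-k}y_2^k$ then shows $w_r$ is not in the image. The point to emphasize is that this argument yields $\infty$-badness directly, not merely $m$-badness for finite $m$ --- which, as Example~\ref{sixcopies} warns, would be insufficient --- and it is precisely the need to control $p_\infty^*$ rather than the finite $p_m^*$ that makes the Wronskian-type construction essential.
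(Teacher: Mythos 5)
Your proof is correct and follows the same overall route as the paper's: Theorem \ref{thm:cstar} for case (2), coregularity plus Corollary \ref{cor:coreg} for case (1), and for necessity a reduction via Lemma \ref{directsum} to two- and three-dimensional submodules, the first handled through the null cone of a one-dimensional quotient and the second through the Wronskian-type invariant (your $x\,y_1^{r-2}(y_1Dy_2-y_2Dy_1)$ is the paper's $(x_1Dx_2-x_2Dx_1)x_1^{s-2}y$ with the signs of the weights reversed). Two of your deviations are worth recording. First, the paper dismisses a pair of weights $r_i,-s_j$ with neither equal to $1$ by asserting that the null cone of $\C_{r_i}\oplus\C_{-s_j}$ consists of two non-reduced hypersurfaces; this fails when one weight divides the other (weights $2,-4$ give the generating invariant $x^2y$, whose zero locus has the reduced component $\{y=0\}$, so that submodule is actually very good by Theorem \ref{thm:dim1}). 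Your split into ``neither divides the other'' (where the multiplicities $s_j/g$ and $r_i/g$ both exceed $1$) and ``$g=\gcd(r_i,s_j)>1$'' (where the global gcd hypothesis supplies a weight not divisible by $g$, hence a closed orbit with finite nontrivial isotropy acting nontrivially on its slice, and the corollary to Theorem \ref{thm:badfinite} applies) repairs this gap, and is the one place where the normalization that the overall gcd equals $1$ is genuinely needed. Second, you are explicit that the obstructions defeat $p_\infty^*$ and not merely $p_m^*$ for finite $m$ --- a distinction Example \ref{sixcopies} shows is real --- by checking that $(Dp)^2/p$ is a fixed-weight invariant outside the image of the whole differential algebra and by using that failure of $D$-finiteness forces $\infty$-badness; the paper leaves this implicit. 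Otherwise the two arguments coincide, including the final weight-one, $x$-degree-one comparison against the $Du_k$ that shows the Wronskian invariant is not in the image.
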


\begin{proof}
Suppose that there are weights $r_i$ and $-s_j$ neither of which is 1. Let $W$ be the corresponding two dimensional submodule of $V$. Then the null cone of $W$ consists of two non-reduced hypersurfaces, hence $W$ is not good and neither is $V$. Thus, without loss of generality, we can assume  that $r_i=1$ for all $i$. If $d=1$, then we are in case (1) and one easily sees that $Z'=Z$. Suppose that $d>1$. If  $s_j=s\neq 1$ for some $j$, then $V$ contains a submodule $W$ with weights 1, 1 and $-s$. Let  $x_1$, $x_2$ and $y$ be the corresponding coordinate functions. The generators of the invariants have degree $s+1$. Now $(x_1Dx_2-x_2Dx_1)x_1^{s-2}y$ is an invariant of $G_\infty$ of weight one and degree $s+1$ but is not $D$ applied to a  generator  of $\cO(W)^{\C^*}$. Hence $W$ is not good. Thus we have to be in case (2).
\end{proof}

We now prove the theorem. We may assume that $V^G=(0)$ and that the weights $1$ and $-1$ have multiplicity $n$. Let $x_1,\dots,x_n$ be coordinate functions corresponding to the positive weights and let $y_1,\dots,y_n$ be coordinate functions corresponding to the negative weights.  Set $x_i^{(k)}:=D^kx_i$ and $y_j^{(k)}=D^k y_j$ for $k\geq 0$. Then the $x_i^{(k)}$ and $y_j^{(k)}$ are coordinate functions on $V_\infty$. We order the $x_i^{(k)}$ so that $x_i^{(k)}<x_j^{(\ell)}$ if $k<\ell$ or $k=\ell$ and $i<j$. We similarly order the $y_i^{(k)}$. The $G$-invariants of $R:=\cO(V_\infty)$ are linear combinations of monomials $W:=u_1\cdots u_r v_1\cdots v_r$ where each $u_i$ is an $x_j^{(k)}$ and each $v_i$ is some $y_{j'}^{(k')}$ and we have that $u_1\leq \dots\leq u_r$ and $v_1\leq\dots\leq v_r$.  We say that $W$ is a word in \emph{standard form}. We define the weight $\wt(W)$ of $W$ to be the sums of the exponents of the $x_i^{(k)}$ and $y_j^{(\ell)}$ occurring in $W$. Let $W'=u_1'\cdots u_r'v_1'\cdots v_r'$ be another monomial in standard form.  We say that  $W<W'$ if
\begin{enumerate}
\item $\wt(W)<\wt(W')$,
\item $\wt(W)=\wt(W')$, $u_1\cdots u_sv_1\cdots v_s=u_1'\cdots u_s'v_1'\cdots v_s'$ and $u_{s+1}<u_{s+1}'$ where $0\leq s<r$, or
\item $\wt(W)=\wt(W')$, $u_1\cdots u_{s+1}v_1\cdots v_s=u_1'\cdots u_{s+1}'v_1'\cdots v_s'$ and   $v_{s+1}\\<v_{s+1}'$ where $0\leq s<r$.
\end{enumerate}
We say that $W$ is \emph{admissible\/} if it is in standard form, $v_1$ is some $y_j$ and for $1\leq s<r$, $D(v_s)>v_{s+1}$.
Given  $0\neq h\in\cO(V_\infty)^{G_\infty}$ of degree $2r$ and weight $m$ it is a sum $\sum_ic_iW_i$ where the $W_i$ are distinct and in standard form, the $c_i$ are nonzero scalars and $\wt(W_i)=m$ and $\deg(W_i)=2r$ for all $i$. We define the leading term $M(h)$ to be the greatest $W_i$. We will eventually show that $M(h)$ is admissible.

The ring $\cO(Z)$ has generators $f_{ij}$ (corresponding to $x_iy_j$) and relations $R_{ab,cd}=f_{ab}f_{cd}-f_{ad}f_{cb}$, $1\leq a,b,c,d\leq n$ and $a\neq c$, $b\neq d$.  
Then $\cO(Z_\infty)$ has generators $f^{(k)}_{ij}:=D^kf_{ij}$ and relations generated by the $D^kR_{ab,cd}$. Of course, $p_\infty^*f^{(k)}_{ij}=D^k(x_iy_j)$. We define a partial order on the $f^{(k)}_{ij}$ where $f_{ij}^{(k)}\leq f_{i'j'}^{(k')}$ if 
\begin{enumerate}
\item $k+2\leq k'$,
\item $k+1=k'$ and $i\leq i'$ or $j\leq j'$, or 
\item $k=k'$ and $i\leq i'$ and $j\leq j'$.
\end{enumerate}
We say that a monomial $f_{i_1,j_1}^{(k_1)}\cdots f_{i_r,j_r}^{(k_r)}$ is \emph{standard\/} if $f_{i_1,j_1}^{(k_1)} \leq\dots \leq f_{i_r,j_r}^{(k_r)}$.

\begin{lemma}
The algebra $\cO(Z_\infty)$ is spanned by the standard monomials.
\end{lemma}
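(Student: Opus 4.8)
The plan is to establish the spanning statement by a straightening (Gröbner-style) reduction modulo the relations $D^kR_{ab,cd}$. Since each $D^sR_{ab,cd}$ is homogeneous of degree $2$ and weight $s$, the rewriting takes place inside a fixed bidegree, where only finitely many monomials occur; so I only need, in each bidegree, a total order that strictly decreases under reduction. First I would refine the given partial order on the generators to a total order $\prec$, for instance declaring $f_{ij}^{(k)}\prec f_{i'j'}^{(k')}$ when $k<k'$, or $k=k'$ and $(i,j)<_{\mathrm{lex}}(i',j')$; rules (1)--(3) show at once that $\prec$ refines $\le$. I would then order monomials by listing the factors of each in $\prec$-increasing order and comparing these tuples lexicographically starting from the $\prec$-smallest factor, with the convention that fewer copies of a smaller generator makes a monomial larger. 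A direct check shows this order is multiplicative, and its normal forms are exactly the standard monomials, because a monomial fails to be standard precisely when two of its factors are $\le$-incomparable.

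The crux is to show that every $\le$-incomparable pair of generators---equivalently every non-standard degree-$2$ monomial---is the $\prec$-leading term of one of the relations $D^sR_{ab,cd}$, with all remaining terms $\prec$-smaller (indeed standard). There are exactly two kinds of incomparable pairs. The same-level pairs are $f_{ad}^{(k)}f_{cb}^{(k)}$ with $a<c$ and $b<d$; expanding $D^{2k}R_{ab,cd}$ I would verify, using rules (1) and (3), that every term with unequal levels has level gap at least $2$ and is automatically comparable, that the ``diagonal'' terms $f_{ab}^{(p)}f_{cd}^{(2k-p)}$ are all standard, so that the balanced ``anti-diagonal'' term $f_{ad}^{(k)}f_{cb}^{(k)}$ is the unique non-standard monomial, and that it is $\prec$-largest. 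The adjacent-level pairs are $f_{ad}^{(k)}f_{cb}^{(k+1)}$ with $a>c$ and $b<d$; the same analysis applied to $D^{2k+1}R_{ab,cd}$, now invoking rule (2) for the two near-balanced terms and rule (1) for the rest, shows that $f_{ad}^{(k)}f_{cb}^{(k+1)}$ is its unique non-standard, $\prec$-leading term. As $k$ and $a,b,c,d$ range over all admissible values, these two families of relations exhaust every incomparable pair.

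Granting this, the argument closes by the standard descent: if $\mu$ is not standard it contains an incomparable pair, hence is divisible by the leading term of one of the chosen relations, and substituting that relation rewrites $\mu$ as a linear combination of monomials of the same bidegree that are strictly $\prec$-smaller, by multiplicativity of $\prec$. Finiteness of each bidegree forces this process to terminate, and it can only stop at standard monomials, which therefore span $\cO(Z_\infty)$.

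I expect the real work to be the two computations in the second paragraph: choosing $\prec$ so that the balanced anti-diagonal monomial dominates, and checking that for the correct choice of $s$ and of the index pattern ($a<c,\,b<d$ versus $a>c,\,b<d$) the relation $D^sR_{ab,cd}$ has a single non-standard term. The accompanying verification that $\prec$ is multiplicative, so that reduction genuinely decreases the order, is routine but essential for termination.
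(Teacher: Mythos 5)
Your proposal is correct and follows essentially the same route as the paper: both straighten a non-standard pair using $D^{2k}R_{ab,cd}$ (same level) or $D^{2k+1}R_{ab,cd}$ (adjacent levels), observe that all off-balanced terms have level gap at least $2$ and are hence standard, and terminate by finiteness of each fixed degree and weight. The only difference is cosmetic: you make the monomial order total and reduce the unique non-standard term \emph{downward}, whereas the paper replaces it by ``larger'' standard terms and lets the bounded bidegree force termination from above.
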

  
  \begin{proof}
Suppose that $f_{ab}^{(k)}f_{cd}^{(\ell)}$ and $f_{cd}^{(\ell)}f_{ab}^{(k)}$ are not standard. First suppose that $k=\ell$. We may assume that  $a>c$ and $b<d$. Consider the relation $D^{2k}(f_{ab}f_{cd}-f_{ad}f_{cb})=0$. It is a sum $\sigma+f_{ab}^{(k)}f_{cd}^{(k)}-f_{ad}^{(k)}f_{cb}^{(k)}$ where the terms in $\sigma$ are standard and involve an $f$ with weight higher than $k$. We   replace $f_{ab}^{(k)}f_{cd}^{(k)}$ by   $f_{ad}^{(k)}f_{cb}^{(k)}-\sigma$. The new expression is a sum of standard terms and is ``larger'' than $f_{ab}^{(k)}f_{cd}^{(k)}$ in that each term has a factor with a higher weight or contains the term $f^{(k)}_{ad}$ which is larger than $f^{(k)}_{ab}$ and $f^{(k)}_{cd}$ in our partial order.

  Now suppose that $\ell=k+1$ and our terms are not standard. Then $a>c$ and $b>d$. Consider the relation $D^{2k+1}(f_{ab}f_{cd}-f_{ad}f_{cb})=0$. Expanding we get a sum 
  $$
  \sigma+f_{ab}^{(k)}f_{cd}^{(k+1)}+f_{ab}^{(k+1)}f_{cd}^{(k)}-f_{ad}^{(k)}f_{cb}^{(k+1)}-f_{ad}^{(k+1)}f_{cb}^{(k)} 
  $$
  where $\sigma$ is a sum of standard terms with a factor of weight at least $k+2$. Now the term $f_{ab}^{(k+1)}f_{cd}^{(k)}$ is standard and the factor $f_{ab}^{(k+1)}$ is larger than $f^{(k)}_{ab}$ and $f^{(k+1)}_{cd}$ in our partial order. Similarly, $f_{ad}^{(k)}f_{cb}^{(k+1)}$ is standard with $f_{cb}^{(k+1)}$ larger than $f^{(k)}_{ab}$ and $f^{(k+1)}_{cd}$. Similarly for $f_{ad}^{(k+1)}f_{cb}^{(k)} $. Hence we can replace $f_{ab}^{(k)}f_{cd}^{(k+1)}$ by larger standard terms.
  
  Now it follows by   induction  that any monomial in the $f_{ij}^{(k)}$ is a sum of standard monomials.
   \end{proof}
  
  Let $w=f_{i_1,j_1}^{(k_1)}\cdots f_{i_r,j_r}^{(k_r)}$ be standard. We construct a word 
  $$
  L(w)=u_1\cdots u_rv_1\cdots v_r
  $$   
  such that
  \begin{enumerate}
  \item $u_1\leq\dots\leq u_r$ and $v_1\leq\dots\leq v_r$, i.e., $L(w)$ is a standard word.
\item $u_sv_s$ is a monomial occurring in $p_\infty^*(f^{(k_s)}_{i_s,j_s})$, $1\leq s\leq r$.
\item  $v_1$ is a  $y_i$ and $Dv_s>v_{s+1}$, $1\leq s<r$, i.e., $L(w)$ is admissible.
\end{enumerate}
Clearly we must have that $u_1v_1=x_{i_1}^{(k_1)}y_{j_1}^{(0)}$. Suppose that we have determined $u_1,\dots,u_{r-1}$ and $v_1,\dots,v_{r-1}$ satisfying (1)--(3) for $r$ replaced by $r-1$. We have that $u_{r-1}=x_{i_{r-1}}^{(a)}$ and $v_{r-1}=y_{j_{r-1}}^{(b)}$ where $a+b=k_{r-1}$. Suppose  that $k_r=k_{r-1}$. Then $i_{r-1}\leq i_r$ and $j_{r-1}\leq j_r$ so   $u_r=x_{i_r}^{(a)}$ and $v_r=y_{j_r}^{(b)}$ satisfy our conditions. If $k_r=k_{r-1}+1$ and $i_{r-1}>i_r$, then $j_{r-1}\leq j_r$ and we set $u_r=x_{i_r}^{(a+1)}$ and $v_r=y_{j_r}^{(b)}$. The case where $j_{r-1}>j_r$ is similar. If $i_{r-1}\leq i_r$ and $j_{r-1}\leq j_r$, then we set $u_r=x_{i_r}^{(a+1)}$, $v_r=y_{j_r}^{(b)}$. The case $k_r\geq 2+k_{r-1}$ is handled similarly.  

\begin{prop}\label{prop:L(w)maximal}
Let $w$ be as above. Then $L(w)=M(p_\infty^*(w))$.
\end{prop}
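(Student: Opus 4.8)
The plan is to prove the two inequalities $M(p_\infty^*(w))\ge L(w)$ and $M(p_\infty^*(w))\le L(w)$ separately, working throughout inside the fixed weight $m=\sum_s k_s$ and degree $2r$, on which the order $<$ is the total order obtained by comparing the interleaved sequences $u_1,v_1,u_2,v_2,\dots$ lexicographically.

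First I would settle membership. Writing each factor as $p_\infty^*(f_{i_s,j_s}^{(k_s)})=\sum_{a+b=k_s}\binom{k_s}{a}x_{i_s}^{(a)}y_{j_s}^{(b)}$ and expanding the product
$$
p_\infty^*(w)=\prod_{s=1}^r p_\infty^*(f_{i_s,j_s}^{(k_s)}),
$$
each monomial of the expansion is indexed by a split $(a_s,b_s)$ with $a_s+b_s=k_s$, and after sorting the $x$- and $y$-factors into standard form it contributes to a single standard word with positive coefficient $\prod_s\binom{k_s}{a_s}$. Since all these coefficients are positive, no cancellation can occur, so $\supp(p_\infty^*(w))$ is exactly the set of standard words arising from such splits. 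By property (2) of its construction $L(w)$ is the word coming from the split-vector $(\alpha_s,\beta_s)$ built there (already in standard form), so $L(w)\in\supp(p_\infty^*(w))$ and hence $M(p_\infty^*(w))\ge L(w)$.

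The substance is the reverse inequality: every standard word $W$ in the support satisfies $W\le L(w)$. I would prove this by an exchange argument exhibiting $L(w)$ as the greedy optimum. Index the terms by split-vectors $\mathbf a=(a_1,\dots,a_r)$ in the box $\prod_s[0,k_s]$, the associated word being the sorted rearrangement of $\{x_{i_s}^{(a_s)}\}$ and $\{y_{j_s}^{(k_s-a_s)}\}$. An elementary move $a_t\mapsto a_t+1$ raises one $x$-factor by a derivative while lowering the paired $y$-factor; after re-sorting, the $u$-sequence increases pointwise (its first change, an increase, at the old sorted position $p$ of the raised factor) and the $v$-sequence decreases pointwise (its first change, a decrease, at position $q$). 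Because the comparison reads $u_1,v_1,u_2,v_2,\dots$ in order, this move strictly increases $W$ exactly when $q\ge p$ and decreases it when $q<p$, with the symmetric statement for $a_t\mapsto a_t-1$. I would then show that from an arbitrary $\mathbf a$ one reaches the split-vector of $L(w)$ by a sequence of such moves along which $W$ never decreases: at the first interleaved position where $W$ disagrees with $L(w)$, the defining inequalities of the partial order on the $f_{i,j}^{(k)}$ (which force $k_1=\min_s k_s$ and, more generally, constrain how $i_s,j_s$ may grow as $k_s$ grows) guarantee that the derivative needed to match $L(w)$ at that position is available via a move of the favourable type. Admissibility of $L(w)$ — equivalently, that its $y$-derivative orders satisfy $\beta_1=0$ and $\beta_s-\beta_{s-1}\in\{0,1\}$, with an increment precisely when $k_s>k_{s-1}$ and $j_{s-1}>j_s$ — is exactly the feasibility condition certifying that no favourable move remains, i.e.\ that $L(w)$ is a fixed point and hence the maximum.

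The main obstacle is this monotonicity claim. The coupling between the $x$- and $y$-factors means a move chosen to enlarge an early $u_p$ may shrink some $v_q$, and one must verify that the construction's rule — sometimes placing a derivative on a $y$-factor solely to enlarge an earlier $v$, rather than greedily maximizing every $x$-factor, as the example $w=f_{1,2}^{(0)}f_{2,1}^{(1)}$ already shows — is globally correct. Making this precise requires careful bookkeeping of the sorted positions together with the index tie-breaks (the clauses $i\le i'$, $j\le j'$ in the order on the $f_{i,j}^{(k)}$), organized by whether the first disagreement between $W$ and $L(w)$ falls in a $u$-slot or a $v$-slot, so that it always resolves in favour of $L(w)$. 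I expect this case analysis to be the bulk of the work; the membership step and the reduction to a single first-disagreement position are routine.
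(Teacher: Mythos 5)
Your first half is fine: the positivity of the binomial coefficients in $p_\infty^*(f_{i,j}^{(k)})=\sum_{a+b=k}\binom{k}{a}x_i^{(a)}y_j^{(b)}$ rules out cancellation, so $L(w)$ lies in the support of $p_\infty^*(w)$ and $M(p_\infty^*(w))\geq L(w)$. This step is genuinely needed (the paper's invocation of ``maximality of $M$'' silently uses it), and your local-move analysis (a move raising $a_t$ increases the word iff the first change in the sorted $u$-sequence precedes the first change in the sorted $v$-sequence) is correct for the interleaved lexicographic order.

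The problem is that the other inequality --- that \emph{every} word in the support is $\leq L(w)$ --- is the entire content of the proposition, and you do not prove it: you state a monotone-path claim (``from an arbitrary $\mathbf a$ one reaches the split-vector of $L(w)$ by moves along which $W$ never decreases'') and then explicitly defer its verification as ``the bulk of the work.'' That claim is global, not local; it does not follow from your move analysis, and nothing in the proposal rules out a split-vector from which every available move decreases $W$ yet whose word exceeds $L(w)$ at some $v$-slot. So as written this is a plan with the decisive step missing. For comparison, the paper avoids the exchange machinery entirely by a positional induction: once $u_1,\dots,u_{s-1},v_1,\dots,v_{s-1}$ of $M$ are known to agree with $L(w)$, standardness of the word forces $v_s'\geq v_{s-1}$, hence $b_s'\geq b_{s-1}$ (or $\geq b_{s-1}+1$ when $j_{s-1}>j_s$), hence $a_s'\leq k_s-b_{s-1}$ (resp.\ $k_s-b_{s-1}-1$), which is exactly the exponent chosen in the construction of $L(w)$; maximality of $M$, together with your membership step, then forces equality at position $s$. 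If you want to salvage your route, you should replace the path claim by this direct bound at the first position of disagreement --- i.e., show that the first disagreement can never favour $W$ --- which is essentially the paper's argument; otherwise the case analysis you postpone still has to be carried out, and it is where all the difficulty lives.
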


\begin{proof}
We have that $L(w)=u_1\cdots u_rv_1\cdots v_r$ where $u_s=x_{i_s}^{(a_s)}$ and $v_s=y_{j_s}^{(b_s)}$ and $a_s+b_s=k_s$, $1\leq s\leq r$. Now $M:=M(p_\infty^*(w))=u_1'\cdots u_r'v_1'\cdots v_r'$ where each $u_s'$ is some $x_{i_{s}'}^{a_{s}'}$ and each $v_{s}'$ is some $y_{j_{s}'}^{b_{s}'}$. 

First we prove that $u_1'=u_1$ and that  $v_1'=v_1$ where $u_1=x_{i_1}^{(k_1)}$  and $v_1=y_{j_1}^{(0)}$.
Since $f_{i_1,j_1}^{(k_1)}$ is a factor of $w$, some $u_s'$ is $x_{i_1}^{a}$  where $a\leq k_1$. Hence $u_1'\leq u_1$ and by maximality of $M$ we must have equality. It follows that the monomial of $p_\infty^*(f_{i_1,j_1}^{(k_1)})$ which occurs in $M$ is $x_{i_1}^{(k_1)} y_{j_1}^{(0)}$. Hence some $v_s'$ is $y_{j_1}^{(0)}$ and $v_1'\leq y_{j_1}^{(0)}$. By maximality of $M$ we have $v_1'=v_1$.

Now suppose by induction that $u_i=u_i'$ and $v_i=v_i'$ for $i<r$. Then $u_r'v_r'$ is a monomial $x_{i_r}^{(a_r')} y_{j_r}^{(b_r')}$ occurring in $p_\infty^*(f_{i_r,j_r}^{(k_r)})$ where $a_r'+b_r'=k_r$. If $j_{r-1}\leq j_r$, then the largest possible $a_r'$ is $a_r=k_r-b_{r-1}$ and we have that $M=L(w)$. If $j_{r-1}>j_r$, one has $a_r'=a_r=k_r-b_{r-1}-1$ and again $M=L(w)$. Hence we always have $L(w)=M(p_\infty^*(w))$.
\end{proof}

If $W$ is  admissible, then it is clear that there is a unique $w$ with $W=L(w)$, hence we have the following

 \begin{lemma}
  The function $w\mapsto L(w)$ is  a bijective mapping from  standard monomials to admissible monomials. Hence the standard monomials are linearly independent.
  \end{lemma}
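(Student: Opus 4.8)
The plan is to invert $L$ explicitly and then read off linear independence from the leading-term identity $L(w)=M(p_\infty^*(w))$ of Proposition~\ref{prop:L(w)maximal}. That $L$ sends standard monomials to admissible words is already verified in the construction of $L$, so the task is to produce a two-sided inverse. Given an admissible word $W=u_1\cdots u_rv_1\cdots v_r$ in standard form, I would write $u_s=x_{i_s}^{(a_s)}$ and $v_s=y_{j_s}^{(b_s)}$ and set
$$
R(W):=f_{i_1,j_1}^{(k_1)}\cdots f_{i_r,j_r}^{(k_r)},\qquad k_s:=a_s+b_s .
$$
Because the construction of $L$ emits its letters already in increasing order, with the $s$-th $x$-letter and $s$-th $y$-letter both coming from the $s$-th factor of $w$, sorting $L(w)$ and pairing position-wise recovers the factors of $w$; hence $R\circ L=\id$ on standard monomials. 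It then remains to check that $R(W)$ is itself standard and that $L(R(W))=W$, which gives $L\circ R=\id$ and completes the bijection.

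The one substantive point is that $R(W)$ is a standard monomial, and here the definition of admissibility is exactly what is needed. Using $v_1\leq\dots\leq v_r$, the conditions that $v_1$ has derivative order zero and that $D(v_s)>v_{s+1}$ translate into $b_1=0$ together with, for each $s$, either $b_{s+1}=b_s$ and $j_s\leq j_{s+1}$, or $b_{s+1}=b_s+1$ and $j_s>j_{s+1}$. In the first case $k_{s+1}-k_s=a_{s+1}-a_s\geq 0$, and according as this difference is $0$, $1$, or at least $2$ one checks $f_{i_s,j_s}^{(k_s)}\leq f_{i_{s+1},j_{s+1}}^{(k_{s+1})}$ against clauses (3), (2), (1) of the partial order on the $f^{(k)}_{ij}$, using $u_s\leq u_{s+1}$ to supply the needed inequalities among the $i$'s; the second case, where $k_{s+1}-k_s\geq 1$, is handled identically. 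Replaying the construction of $L$ on $R(W)$ then reproduces $W$ letter by letter. I expect this matching of the three clauses of the $f$-order against the two admissibility alternatives to be the main obstacle, though it is a finite and routine verification that merely mirrors the case analysis already used to define $L$.

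With $L$ established as a bijection, linear independence is immediate. Suppose $\sum_w c_w w=0$ is a relation among standard monomials with some $c_w\neq 0$; applying the homomorphism $p_\infty^*$ gives $\sum_w c_w\,p_\infty^*(w)=0$ in the polynomial ring $\cO(V_\infty)$. Let $w_0$ be the unique index (unique by injectivity of $L$) with $c_{w_0}\neq 0$ maximizing the word $L(w_0)$. For every other $w$ with $c_w\neq 0$, every monomial of $p_\infty^*(w)$ is $\leq M(p_\infty^*(w))=L(w)<L(w_0)$, so $L(w_0)$ occurs only in $p_\infty^*(w_0)$, where as the leading term it has nonzero coefficient. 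Hence the coefficient of $L(w_0)$ in the vanishing sum is a nonzero multiple of $c_{w_0}$, a contradiction. Therefore the standard monomials are linearly independent.
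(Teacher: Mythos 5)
Your proof is correct and takes essentially the same route as the paper: the paper simply asserts that each admissible word has a unique standard preimage under $L$ and then reads off linear independence from Proposition \ref{prop:L(w)maximal} via the distinctness of the leading terms $M(p_\infty^*(w))=L(w)$, which is exactly the argument you carry out. Your explicit inverse $R$, together with the case-check matching the two admissibility alternatives against the three clauses of the partial order on the $f_{ij}^{(k)}$, just fills in what the paper leaves as ``it is clear.''
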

  Since the smallest possible $u_1v_1$ is $f_{11}=x_1y_1$, multiplication by $f_{11}$ is injective on linear combinations of standard monomials. Hence $f_{11}$ is not a zero divisor in $\cO(Z_\infty)$ and the mapping from $\cO(Z_\infty)$ to its localization  $\cO(Z_\infty)_{f_{11}}$ is injective. By Corollary \ref{cor:localization} this localization is isomorphic to  $(\cO(V_\infty)^{G_\infty})_{(x_1y_1)}$. Hence we have

  \begin{prop}
Let $G = \mathbb{C}^*$ and let $V$ be a $G$-module with weights $\pm 1$ of multiplicity $n$, as above. The mapping $p_\infty^*\colon \cO(Z_\infty)\to\cO(V_\infty)^{G_\infty}$ is injective, hence  $\cO(Z_\infty)$ is reduced. Given $h\in\cO(V_\infty)^{G_\infty}$ there  is an $s\geq 0$ and an $f\in\cO(Z_\infty)$ such that $(x_1y_1)^sh=p_\infty^*(f)$.
  \end{prop}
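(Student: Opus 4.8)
The plan is to deduce both assertions from the commutative square that the preceding paragraph has set up, relating $p_\infty^*$ to its localization at $f_{11}=x_1y_1$. First I would pin down that $f_{11}$ is a non-zero-divisor. Since $f_{11}^{(0)}$ is the minimum of the partial order on the $f_{ij}^{(k)}$, left multiplication by $f_{11}$ sends a standard monomial to a standard monomial (one simply prepends the factor $f_{11}^{(0)}$), and distinct standard monomials go to distinct ones. As the standard monomials are linearly independent (via the bijection $w\mapsto L(w)$), multiplication by $f_{11}$ is injective on $\cO(Z_\infty)$, so the localization map $\lambda\colon\cO(Z_\infty)\to\cO(Z_\infty)_{f_{11}}$ is injective. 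On the other side, Corollary \ref{cor:localization} applies: $V$ is smooth, $f_{11}\in\cO(V)^G$, and every point of $V_{f_{11}}$ has $x_1\neq0$ and $y_1\neq0$, so its $\C^*$-orbit is closed with trivial isotropy and the slice representation is that of the trivial group, hence very good. This yields the isomorphism $\cO(Z_\infty)_{f_{11}}\xrightarrow{\sim}(\cO(V_\infty)^{G_\infty})_{x_1y_1}$, which is exactly the localization of $p_\infty^*$. In the square
\[
\begin{CD}
\cO(Z_\infty) @>{p_\infty^*}>> \cO(V_\infty)^{G_\infty}\\
@V{\lambda}VV @VVV\\
\cO(Z_\infty)_{f_{11}} @>{\sim}>> (\cO(V_\infty)^{G_\infty})_{x_1y_1}
\end{CD}
\]
the left arrow is injective and the bottom arrow is an isomorphism, so $p_\infty^*$ is injective. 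Since $\cO(V_\infty)^{G_\infty}$ is a subring of the polynomial ring $\cO(V_\infty)$ it is a domain, in particular reduced; an injection into a reduced ring forces $\cO(Z_\infty)$ to be reduced as well.

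For the second assertion I would chase a given $h$ through the bottom isomorphism: the element $h/1\in(\cO(V_\infty)^{G_\infty})_{x_1y_1}$ is the image of some $f/f_{11}^{\,s}$ with $f\in\cO(Z_\infty)$ and $s\geq0$, so $p_\infty^*(f)/(x_1y_1)^s=h/1$ in the localization. Because $\cO(V_\infty)^{G_\infty}$ is a domain and $x_1y_1\neq0$, its localization map is injective, and this equality therefore lifts to the honest identity $(x_1y_1)^sh=p_\infty^*(f)$ in $\cO(V_\infty)^{G_\infty}$, as required.

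The substantive work has all been done upstream---the standard-monomial spanning lemma, the leading-term identity $L(w)=M(p_\infty^*(w))$, and the bijectivity of $L$---so what remains is essentially formal. The one point needing genuine care is verifying the hypothesis of Corollary \ref{cor:localization} at points of $V_{f_{11}}$ (closedness of the orbits and triviality of the isotropy, hence of the slice representation), together with noting that the finite-level localization isomorphisms are compatible with the projections $\pi_{p,m}$ and so pass to the inverse limit; this compatibility is what legitimizes working with the $m=\infty$ square directly. An alternative, entirely self-contained route to injectivity would bypass the localization by comparing leading terms: for $0\neq f=\sum_w c_w\,w$ in standard monomials, the maximal $L(w)$ among the indices with $c_w\neq0$ is an admissible word appearing in $p_\infty^*(f)$ with a nonzero coefficient, forcing $p_\infty^*(f)\neq0$.
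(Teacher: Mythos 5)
Your proposal is correct and follows essentially the same route as the paper: the paper likewise observes that $f_{11}$ is the minimum in the partial order, so multiplication by it preserves and permutes standard monomials injectively, concludes that $f_{11}$ is a non-zero-divisor and hence that $\cO(Z_\infty)\to\cO(Z_\infty)_{f_{11}}$ is injective, and then invokes Corollary \ref{cor:localization} to identify $\cO(Z_\infty)_{f_{11}}$ with $(\cO(V_\infty)^{G_\infty})_{x_1y_1}$, from which both assertions follow. Your write-up merely makes explicit the verification of the slice-representation hypothesis on $V_{f_{11}}$ and the diagram chase that the paper compresses into ``Hence we have.''
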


\begin{cor}
Let $h\in\cO(V_\infty)^{G_\infty}$. Then $M(h)$ is admissible.
\end{cor}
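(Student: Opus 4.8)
The plan is to leverage the preceding proposition, which furnishes, for any nonzero $h\in\cO(V_\infty)^{G_\infty}$, an integer $s\geq 0$ and an element $f\in\cO(Z_\infty)$ with $(x_1y_1)^sh=p_\infty^*(f)$. I would first determine the leading term of the right-hand side, and then relate the leading term of $(x_1y_1)^sh$ back to that of $h$. Since we already know that $M(p_\infty^*(w))=L(w)$ is admissible for every standard monomial $w$ (Proposition \ref{prop:L(w)maximal} together with the bijection between standard and admissible monomials), admissibility of $M(h)$ should fall out once these two bookkeeping steps are carried out.

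For the first step I would expand $f$ in the basis of standard monomials, say $f=\sum_w c_w w$ with all $c_w$ nonzero. Because $w\mapsto L(w)$ is injective, the monomials $L(w)=M(p_\infty^*(w))$ are pairwise distinct; let $w^*$ be the unique index maximizing $L(w)$ in our order among those with $c_w\neq 0$. Every monomial occurring in $p_\infty^*(w)$ is $\leq L(w)$, and for $w\neq w^*$ we have $L(w)<L(w^*)$, so $L(w^*)$ occurs in no $p_\infty^*(w)$ with $w\neq w^*$; hence it cannot be cancelled and appears in $p_\infty^*(f)$ with nonzero coefficient as the greatest monomial present. Thus $M(p_\infty^*(f))=L(w^*)$, which is admissible.

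For the second step I would show that multiplication by $x_1y_1=f_{11}$ is a strictly order-preserving injection on standard monomials. Indeed $x_1=x_1^{(0)}$ and $y_1=y_1^{(0)}$ are the minimal $x$- and $y$-variables and carry weight zero, so $(x_1y_1)\cdot(u_1\cdots u_rv_1\cdots v_r)$ is again standard, with $u$-string $(x_1,u_1,\dots,u_r)$ and $v$-string $(y_1,v_1,\dots,v_r)$. This operation leaves the weight unchanged and shifts each of the three clauses defining $<$ by exactly one slot, hence preserves the order, and it is clearly invertible by deleting the leading $x_1$ and $y_1$. Since the map is injective there is no cancellation, so it carries the leading term of $h$ to the leading term of $(x_1y_1)h$, and by iteration $M\big((x_1y_1)^sh\big)=(x_1y_1)^sM(h)$.

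Combining the two steps, $(x_1y_1)^sM(h)$ is admissible, and it remains only to descend admissibility to $M(h)$. Writing $M(h)=u_1\cdots u_rv_1\cdots v_r$, the word $(x_1y_1)^sM(h)$ has $v$-string $(y_1^{(0)},\dots,y_1^{(0)},v_1,\dots,v_r)$ with $s$ leading copies of $y_1^{(0)}$. The conditions $D(v_t)>v_{t+1}$ for $1\leq t<r$ are literally a subset of the admissibility conditions of the larger word, while the condition $D(y_1^{(0)})=y_1^{(1)}>v_1$ forces $v_1$ to have jet-level $0$, i.e. $v_1$ is some $y_j$; together these are precisely the admissibility of $M(h)$. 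I expect the main obstacle to be the careful verification in the second and fourth steps that the order $<$ and the admissibility conditions behave correctly under prepending the minimal variables—these checks are elementary but require matching the three clauses in the definitions of $<$ and of admissibility exactly.
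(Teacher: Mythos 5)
Your proposal is correct and follows exactly the paper's own argument: multiply $h$ by $(x_1y_1)^s$ to land in $p_\infty^*\cO(Z_\infty)$, observe via Proposition \ref{prop:L(w)maximal} and the standard-monomial expansion that the leading term there is admissible, and use $M((x_1y_1)^sh)=(x_1y_1)^sM(h)$ to descend. The paper states these steps without proof, and your verifications of the order-preservation under multiplication by $x_1y_1$ and of the descent of admissibility are accurate.
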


\begin{proof}
There is an $s\geq 0$ such that $(x_1y_1)^sh$ is in the image of $\cO(Z_\infty)$. Hence $M((x_1y_1)^sh)$ is admissible. But $M((x_1y_1)^sh)=(x_1y_1)^sM(h)$ and hence $M(h)$ is admissible.
 \end{proof}

\begin{proof}[Proof  of Theorem \ref{thm:cstar}]
 Let $h\in\cO(V_\infty)^{G_\infty}$ have a fixed degree and weight. Let $W=M(h)$. Then, as we saw before, there is a canonical standard monomial $w$ such that $M(p_\infty^*(w))=W$. Then for some constant $c$, $M(h-p_\infty^*(cw))<M(h)$. By induction, then, we get that $h\in p_\infty^*\cO(Z_\infty)$.
 Since the group $G_\infty$ is connected, one shows as usual that $\cO(V_\infty)^{G_\infty}$ is integrally closed, hence so is $\cO(Z_\infty)$.
\end{proof}

\section{Some representations of $\SL_n$}
In this section we consider the case where $(V,G)=(\ell\C^n,\SL_n)$. We use a version of standard monomial theory to prove that $p_\infty^*\cO(Z_\infty)=\cO(V_\infty)^{G_\infty}$. For $n=2$ one can show that $p_\infty^*$ is injective, but for $n\geq 3$ this fails, as we show in Example \ref{ex:sl3}.

 Consider pairs $(j,k)$ where $1\leq j\leq \ell$ and $k\geq 0$. For each $j$ let $\{x_i^{(j,0)}\}_{i=1}^n$ denote the usual coordinate functions on the $j$th copy of $\C^n$. Let $x_i^{(j,k)}$ denote $(1/k!)D^kx_i^{(j,0)}$. Then the $x_i^{(j,k)}$ are coordinate functions on $V_\infty$. Let $\omega=(j,k)$. Then we write $\omega<\omega'=(j',k')$ if $k<k'$ or $k=k'$ and $j<j'$. We write $x_i^\omega<x_{i'}^{\omega'}$ if $\omega<\omega'$ or $\omega=\omega'$ and $i'<i$. Thus we have $x_1^\omega>\dots>x_n^\omega$ for any $\omega$.
We inductively define a monomial order as follows. Let $M=mx_i^\omega$ (resp.\ $M'=m'x_{i'}^{\omega'}$) be a monomial where $x_i^\omega$ (resp.\ $x_{i'}^{\omega'}$) is the largest variable occurring in $M$ (resp.\ $M'$).  If $\deg(M)<\deg(M')$ or $\deg M=\deg(M')$ and $\wt(M)<\wt(M')$, then $M<M'$. If $M$ and $M'$ have the same degree and weight, then $M<M'$ if $x_i^{\omega}<x_{i'}^{\omega'}$ or $x_i^\omega=x_{i'}^{\omega'}$ and $m<m'$. 
 If $h\neq 0$ is in $\cO(V_\infty)^{G_\infty}$, we let $L(h)$ denote the lowest monomial occurring with nonzero coefficient in the expression of $h$ in the $x_i^\omega$.
  
Let $x^\omega$ denote the vector $(x_1^\omega,\dots,x_n^\omega)$.
By classical invariant theory, the algebra $\cO(V_\infty)^{G}$ is generated by determinants $[x^{\omega_1},\dots,x^{\omega_n}]$.

\begin{remarks}
\begin{enumerate}
\item  We have $L([x^{\omega_1},\dots,x^{\omega_n}])=x_1^{\omega_1}\cdots x_n^{\omega_n}$ where $\omega_1<\dots<\omega_n$.
\item Let $1\leq j_1<\dots<j_n\leq \ell$ and let $k=na+b$ where $a$, $k\in\ZZ^+$ and $0\leq b<n$. Then 
\begin{equation*}
\begin{split}
&L(D^k([x^{(j_1,0)},\dots,x^{(j_n,0)}]))\\
&=L([x^{(j_{b+1},a)},\dots,x^{(j_n,a)},x^{(j_1,a+1)},\dots,x^{(j_b,a+1)}]).
\end{split}
\end{equation*}
\end{enumerate}
\end{remarks}

Let 
\begin{equation}\label{eq:W}
W=\left(\begin{matrix} \omega_{1,1} & \omega_{1,2} & \dots & \omega_{1,n}\\
\vdots & \vdots & \omega_{i,j} & \vdots\\
\omega_{s,1} & \omega_{s,2} &\cdots &\omega_{s,n}\end{matrix}\right)
\end{equation}
where the $\omega_{i,j}$ are pairs as above.
 We call $W$ a \emph{tableau\/} and we say that $W$ is \emph{standard\/} if the rows are strictly increasing and the columns are nondecreasing. To each row $\omega_{r,1},\dots,\omega_{r,n}$ of $W$ we associate the determinant $[x^{\omega_{r,1}},\dots,x^{\omega_{r,n}}]$ and to $W$ we associate the product $P(W)$ of the determinants determined by the rows. Let $Q(W)$ denote $\prod_s\prod_{t=1}^n x_t^{\omega_{s,t}}$.

 \begin{prop}\label{prop:stdmonom}
 \begin{enumerate}
\item The  $P(W)$ for $W$ a standard tableau are a basis of $\cO(V_\infty)^G$.
\item The mapping $W\mapsto Q(W)$ is injective and $Q(W)=L(P(W))$.
\item  If $0\neq h\in\cO(V_\infty)^{G}$, then $L(h)=Q(W)$ for a standard $W$.
\end{enumerate}
 \end{prop}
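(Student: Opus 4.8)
The plan is to establish the three parts of Proposition \ref{prop:stdmonom} by adapting the classical standard monomial theory for the first fundamental theorem of $\SL_n$ to the arc-space setting, where the ordinary vectors are replaced by the jet-variables $x^\omega$ indexed by pairs $\omega=(j,k)$. The essential observation is that the vectors $x^\omega$ behave, with respect to the determinant invariants $[x^{\omega_1},\dots,x^{\omega_n}]$, exactly like a linearly ordered family of generic vectors, with the total order $\omega<\omega'$ playing the role of the column-index order. Thus I would first record the straightening laws (Plücker-type relations) among the determinants $[x^{\omega_1},\dots,x^{\omega_n}]$: any product of two determinants can be rewritten, using the $\SL_n$ syzygies, as a linear combination of products in which the associated two-row tableaux are ``more standard.'' The termination of the straightening algorithm gives the spanning half of (1).

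\textbf{Proof approach for (2).} I would prove the two assertions of part (2) first, since they feed into the proof of the linear independence needed for (1). The identity $Q(W)=L(P(W))$ reduces to the single-row case $L([x^{\omega_1},\dots,x^{\omega_n}])=x_1^{\omega_1}\cdots x_n^{\omega_n}$ for $\omega_1<\dots<\omega_n$, which is exactly Remark (1) preceding the proposition; one then multiplies over the rows, checking that the monomial order is multiplicative (i.e.\ that $L$ of a product is the product of the $L$'s, given the degree--weight--then--lex structure of the order). For the injectivity of $W\mapsto Q(W)$, I would read off the tableau $W$ from the monomial $Q(W)$: in each row the variables $x_1^{\omega_{s,1}},\dots,x_n^{\omega_{s,n}}$ carry their lower index $1,\dots,n$, and since $x_1^\omega>\dots>x_n^\omega$ for fixed $\omega$, the factorization of $Q(W)$ into the correct rows is forced by the standardness (rows strictly increasing, columns nondecreasing). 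This recovers the $\omega_{s,t}$ uniquely.

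\textbf{Proof approach for (1) and (3).} With (2) in hand, linear independence of the $P(W)$ follows formally: a nontrivial linear relation $\sum c_W P(W)=0$ would, on passing to lowest terms, force a cancellation among the $Q(W)=L(P(W))$, but these are distinct monomials by the injectivity in (2), so the $P(W)$ corresponding to the minimal $Q(W)$ cannot be cancelled. Combined with the spanning statement from the straightening laws, this gives that the standard $P(W)$ form a basis, proving (1). For (3), given $0\neq h\in\cO(V_\infty)^G$, I would expand $h$ in the standard basis $h=\sum c_W P(W)$; since the $L(P(W))=Q(W)$ are distinct, $L(h)$ equals the smallest $Q(W)$ occurring with nonzero coefficient, which is $Q$ of a standard tableau.

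\textbf{Main obstacle.} The genuinely delicate point is the spanning statement in (1): verifying that the straightening algorithm for the $\SL_n$ determinants terminates and produces \emph{standard} tableaux in the present $\omega$-indexed setting, where the relations are the arc-space (differentiated) forms $D^k R_{ab,cd}$ of the classical Plücker relations. I expect the main work to be confirming that each application of a straightening relation strictly decreases a suitable well-ordering on tableaux (so the process halts) and that the classical first and second fundamental theorems for $\SL_n$, applied to the infinitely many generic vectors $x^\omega$, guarantee that the determinant invariants generate $\cO(V_\infty)^G$ and that the only relations are straightening consequences. Once this termination-and-generation step is secured, parts (2) and (3) are comparatively routine bookkeeping with the monomial order.
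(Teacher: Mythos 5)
Your proposal is correct and follows essentially the same route as the paper, which simply cites standard monomial theory for (1), declares (2) obvious, and proves (3) by expanding $h$ in the standard basis and noting that the $Q(W_i)=L(P(W_i))$ are distinct. Your write-up merely fills in the details (straightening for spanning, multiplicativity of the order $L$, recovering $W$ from $Q(W)$) that the paper leaves implicit.
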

\begin{proof}
Part (1) is just standard monomial theory and (2) is obvious. Let $h$ be as in (3). Then $h$ is a sum $\sum_i c_iP(W_i)$ where the $c_i$ are nonzero and the $W_i$ are standard and distinct. Then the $Q(W_i)$ are distinct and $L(h)$ is the minimum of the $Q(W_i)$.
\end{proof}

For $\omega=(j,k)$ with $\wt(\omega)=k>0$, let $\tilde\omega$ denote $(j,k-1)$. Let $X\in \gg = \gs\gl_n$ be the  element which sends $x_n\to x_1$ and annihilates $x_i$ for $i<n$. Here the $x_i$ are the usual coordinate functions on $\C^n$. Let $F$ denote $tX\in\lieg[t]$. Then $F$ and $\lieg$ generate $\lieg[t]$ so that $\cO(V_\infty)^{G_\infty} = \cO(V_\infty)^{\lieg[t]}=\cO(V_\infty)^G\cap\cO(V_\infty)^F$. Now $F$ annihilates the $x_n^{(j,0)}$ and the $x_i^\omega$ for $i\neq n$, and it sends $x_n^\omega$ to $x_1^{\tilde\omega}$ when $\wt(\omega)>0$. For $\wt(\omega)>0$ let $y_2^\omega$ denote $x_n^\omega$ and let $y_1^\omega$ denote $x_1^{\tilde\omega}$. Then the action of $F$ on the $y_i^\omega$ is the standard action of the Lie algebra of the maximal unipotent subgroup $U$ of $\SL_2$. Consider the symmetric algebra $A$  in the $y_i^\omega$ for $\wt(\omega)>0$. For fixed $\omega$,  the invariants of $U$ acting on the subalgebra generated by $y_1^\omega$ and $y_2^\omega$ are generated by $y_1^\omega$. Then it follows from \cite[Theorem 2.5.A]{We} that the $U$-invariants of $A$ are generated by the $y_1^\omega$ and the determinants $I^{\omega}_{\omega'}:=[y^\omega,y^{\omega'}]=y_1^\omega y_2^{\omega'}-y_1^{\omega'}y_2^\omega$.   Then   we have the following relations:
$$
I^{\omega_1}_{\omega_2}=-I^{\omega_2}_{\omega_1},\quad I^{\omega_1}_{\omega_2}I^{\omega_3}_{\omega_4}=I^{\omega_3}_{\omega_2}I^{\omega_1}_{\omega_4}+I^{\omega_1}_{\omega_3}I^{\omega_2}_{\omega_4},\quad I^{\omega_1}_{\omega_2}y_1^{\omega_3}=I^{\omega_3}_{\omega_2}y_1^{\omega_1}+I^{\omega_1}_{\omega_3} y_1^{\omega_2}
$$
where $\wt(\omega_i)>0$, $i=1,\dots,4$.   Let 
\begin{equation}\label{eq:Y}
Y=\left(\begin{matrix} \omega_{1} & \omega_{1}'\\
\vdots & \vdots\\
\omega_{q} & \omega_{q}'\\
\omega_{q+1} & \\
\vdots & \\
\omega_{q+r}
\end{matrix}\right)
\end{equation}
be an array of our pairs $(j,k)$ (a tableau) where $k>0$. As usual, we say that $Y$ is \emph{standard\/} if the pairs are strictly increasing in the rows and nondecreasing as one goes down the columns. Let $\widetilde P(Y)$ denote the polynomial $[y^{\omega_1},y^{\omega_1'}]\cdots [y^{\omega_q},y^{\omega_q'}]y_1^{\omega_{q+1}}\cdots y_1^{\omega_{q+r}}$. We say that $\widetilde P(Y)$ is a \emph{standard monomial}. Then the relations above show that $A^F=A^U$ has basis the standard monomials.

We have the induced monomial order on $A\subset \cO(V_\infty)$. Then for $\omega<\omega'$ we have
$$
L([y^{\omega},y^{\omega'}])=y_1^{\omega'}y_2^{\omega}=x_1^{\tilde\omega'}x_n^{\omega}.
$$
\begin{lemma}
Let $Y$ be as above. Then the leading term of $L(\widetilde P(Y))$ is 
\begin{equation} \label{ymonomial}
y_1^{\omega_{1}'}y_2^{\omega_{1}}\cdots y_1^{\omega'_q}y_2^{\omega_q}y_1^{\omega_{q+1}}\cdots y_1^{\omega_{q+r}}=x_1^{\tilde\omega_1'}x_n^{\omega_1}\cdots x_1^{\tilde\omega_q'}x_n^{\omega_q}x_1^{\omega_{q+1}}\cdots x_1^{\omega_{q+r}}.
\end{equation}
\end{lemma}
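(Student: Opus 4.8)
The plan is to recognize the order $<$ as a genuine monomial order and then exploit the multiplicativity of leading terms. First I would observe that, restricted to monomials of a fixed degree and weight, $<$ coincides with the lexicographic order attached to the given total order on the variables $x_i^\omega$. Indeed, the inductive rule ``peel off the largest variable and recurse'' says that two monomials of equal degree and weight are compared by first comparing the exponent of the largest variable (a larger exponent yielding the larger monomial), then breaking ties by the next variable, and so on; this is exactly graded lexicographic order. In particular $M<M'$ implies $MN<M'N$ for every monomial $N$, since degree and weight are additive under multiplication and lexicographic order is itself compatible with multiplication. Thus $<$ is a bona fide monomial order on each (degree, weight)-graded piece, which is all we need because the relevant pieces are finite.

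Next I would record the standard consequence for such an order: the $<$-lowest monomial of a product of polynomials is the product of the $<$-lowest monomials of the factors, occurring with the product of their (nonzero) coefficients. Concretely, if $m=L(f)$ and $m'=L(g)$, then $mm'$ occurs in $fg$, while any other contribution $ab$ (with $a$ a monomial of $f$ and $b$ of $g$) satisfies $ab\geq mb\geq mm'$, with equality only when $a=m$ and $b=m'$ by cancellativity of monomial multiplication. Hence $mm'$ is the \emph{unique} lowest monomial of $fg$ and no cancellation can destroy it, so $L$ is multiplicative on products. Applying this to the factorization of $\widetilde P(Y)$ gives
\[
L\bigl(\widetilde P(Y)\bigr)=\prod_{s=1}^{q} L\bigl([y^{\omega_s},y^{\omega_s'}]\bigr)\cdot\prod_{t=1}^{r} L\bigl(y_1^{\omega_{q+t}}\bigr).
\]

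Finally I would evaluate each factor. Since $Y$ is standard, each row is strictly increasing, so $\omega_s<\omega_s'$ for all $s$, and the computation of $L([y^\omega,y^{\omega'}])$ recorded just before the lemma gives $L([y^{\omega_s},y^{\omega_s'}])=y_1^{\omega_s'}y_2^{\omega_s}=x_1^{\tilde\omega_s'}x_n^{\omega_s}$. Each remaining factor $y_1^{\omega_{q+t}}$ is a single variable, so trivially $L(y_1^{\omega_{q+t}})=y_1^{\omega_{q+t}}$. Multiplying these leading terms together produces precisely the monomial displayed in \eqref{ymonomial}, which proves the lemma.

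The step I expect to be the main obstacle is the first one: justifying that the ad hoc ``largest variable'' recursion really is graded lexicographic order, and hence a monomial order under which the lowest terms of the factors multiply without cancellation. Once that is in place the remainder is bookkeeping, the only points requiring care being that each determinant factor $[y^{\omega_s},y^{\omega_s'}]$ is homogeneous in \emph{both} degree and weight (so that all monomials occurring in the expansion of $\widetilde P(Y)$ share a common degree and weight and are therefore compared purely lexicographically), and that standardness of $Y$ supplies the inequality $\omega_s<\omega_s'$ in every row needed to invoke the formula for $L([y^{\omega},y^{\omega'}])$.
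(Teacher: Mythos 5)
Your proof is correct, and since the paper omits any proof of this lemma, your argument --- identifying the order on each fixed (degree, weight)-graded piece with graded lexicographic order, deducing that $L$ is multiplicative on products, and then combining this with the formula $L([y^{\omega},y^{\omega'}])=y_1^{\omega'}y_2^{\omega}$ recorded immediately before the statement together with $\omega_s<\omega_s'$ from standardness of $Y$ --- is exactly the intended one. The care you take to check that each bracket $[y^{\omega_s},y^{\omega_s'}]$ is homogeneous in both degree and weight (so that all monomials of $\widetilde P(Y)$ are compared purely lexicographically) is the right detail to make explicit.
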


Let $A'$ be the subalgebra of $\cO(V_\infty)$ generated by $A$ and the $x_n^{(j,0)}$, $1\leq j\leq \ell$, and let $B$ be the subalgebra of $\cO(V_\infty)$ generated by the $x_i^\omega$ for $1<i<n$. Then $\cO(V_\infty)$ is the tensor product $A'\otimes_\C B$.

\begin{cor}\label{cor:main}
\begin{enumerate}
\item Let $h\in (A')^F$. Then $L(h)$ is of the form $fL(\widetilde P(Y))$ where $Y$ is standard and $f$ is a homogeneous polynomial in the $x_n^{(j,0)}$.
\item Let $h\in\cO(V_\infty)^F$. Then $L(h)$ is of the from $fL(\widetilde P(Y))$ where $Y$ is standard and $f$ is a homogeneous polynomial in the $x_n^{(j,0)}$ and $x_t^\omega$ for $1<t<n$.
\item Let $h\in\cO(V_\infty)^{G_\infty}$. Then $L(h)=L(P(W))$ where $W=(\omega_{ij})$ is as in \eqref{eq:W}. For any $i$ with $1\leq i\leq s$, if $\wt(\omega_{i,n})>0$, then $\tilde\omega_{i,n}<\omega_{i,1}$.
\end{enumerate}
\end{cor}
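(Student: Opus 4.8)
The plan is to exploit the tensor factorization $\cO(V_\infty)=A'\otimes_\C B$ together with the fact that $F$ acts trivially on $B$ and on the weight-zero variables $x_n^{(j,0)}$, so that describing leading terms of $F$-invariants reduces to the already-understood algebra $A^F=A^U$, whose basis is the standard monomials $\widetilde P(Y)$. Throughout I would use that the defined monomial order is multiplicative ($M<M'\Rightarrow MN<M'N$, with constants minimal since degree is primary), hence $L$ is multiplicative: $L(PQ)=L(P)L(Q)$, the product of the two lowest monomials being the unique lowest monomial of $PQ$ and surviving with nonzero coefficient.

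For (1), since $F$ is a derivation acting only on the $A$-factor of $A'=A\otimes_\C\C[x_n^{(1,0)},\dots,x_n^{(\ell,0)}]$, we have $(A')^F=A^F\otimes_\C\C[x_n^{(j,0)}]$. I would expand $h=\sum_Y c_Y\,\widetilde P(Y)$ with $c_Y\in\C[x_n^{(j,0)}]$ and split each $c_Y$ into monomials $f$; by multiplicativity each resulting term has leading monomial $f\cdot L(\widetilde P(Y))$, computed by \eqref{ymonomial}. The only point needing care is that no cancellation occurs among these minima, for which I would check that $(f,Y)\mapsto f\cdot L(\widetilde P(Y))$ is injective: the weight-zero $x_n$-variables of $f\cdot L(\widetilde P(Y))$ are exactly those of $f$ (every $x_n$-variable of $L(\widetilde P(Y))$ has positive weight), and $Y$ is recovered from $L(\widetilde P(Y))$ by reading off the pairs and singles in \eqref{ymonomial} and sorting, exactly as in the proof that the $\widetilde P(Y)$ form a basis. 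Part (2) is identical after writing $\cO(V_\infty)^F=(A')^F\otimes_\C B$ (since $F$ kills $B$) and absorbing the extra $B$-monomials, which involve only the $x_t^\omega$ with $1<t<n$, into $f$; these variables are disjoint from those appearing in $L(\widetilde P(Y))$, so injectivity and hence no-cancellation persist.

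For (3) I would combine the two pictures of $L(h)$. As $\cO(V_\infty)^{G_\infty}=\cO(V_\infty)^G\cap\cO(V_\infty)^F$, the $G$-invariance gives, via Proposition \ref{prop:stdmonom}, $L(h)=Q(W)=L(P(W))$ for a unique standard $W$ as in \eqref{eq:W}, recovered from the monomial $L(h)$ by sorting each column multiset into nondecreasing order (the inverse of $W\mapsto Q(W)$, by Proposition \ref{prop:stdmonom}(2)). Part (2) gives $L(h)=f\cdot L(\widetilde P(Y))$ with $Y$ standard as in \eqref{eq:Y}. Equating the two monomials, the positive-weight $x_n$-variables of $L(h)$ are exactly $\omega_1\le\dots\le\omega_q$ from \eqref{ymonomial}, while the remaining $x_n$-variables, necessarily of weight zero, come from $f$; since weight is primary in the order on pairs, the weight-zero entries fill the top $r$ rows of column $n$ and $\omega_1\le\dots\le\omega_q$ fill rows $r+1,\dots,s$. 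Thus row $i=r+t$ has $\omega_{i,n}=\omega_t$, and $\omega_{i,1}$ is the $(r+t)$-th smallest element of the column-one multiset $\{\tilde\omega_1',\dots,\tilde\omega_q'\}\cup\{\omega_{q+1},\dots,\omega_{q+r}\}$ read off from \eqref{ymonomial}.

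It then remains to prove $\tilde\omega_{i,n}<\omega_{i,1}$, i.e. $\tilde\omega_t<\omega_{i,1}$, for each such row, and the plan is a counting argument bounding how many column-one entries are $\le\tilde\omega_t$. Since weight-lowering preserves the order, $\tilde\omega_u'\le\tilde\omega_t$ iff $\omega_u'\le\omega_t$; as the $\omega_u'$ increase with $u$ and $\omega_t'>\omega_t$, this forces $u<t$, so at most $t-1$ of the $\tilde\omega_u'$ qualify, and since $\omega_{q+m}\ge\omega_q\ge\omega_t>\tilde\omega_t$ none of the single entries do. Hence at most $t-1<r+t$ column-one entries are $\le\tilde\omega_t$, so the $(r+t)$-th smallest strictly exceeds $\tilde\omega_t$, giving the inequality. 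The main obstacle is precisely this last step: matching the bracket-pairing $x_1^{\tilde\omega_s'}x_n^{\omega_s}$ of \eqref{ymonomial} against the column-sorted rows of $W$, and I expect the care to lie in tracking how the standardness of $Y$ (the inequalities $\omega_t<\omega_t'$ together with the monotonicity of the columns of \eqref{eq:Y}) survives the resorting that produces $W$. By comparison, establishing multiplicativity of the order and the no-cancellation claims in (1)--(2) is routine.
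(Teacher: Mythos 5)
Your handling of part (3), granting parts (1)--(2), is essentially the paper's own argument: the paper likewise plays the $W$-picture coming from $G$-invariance against the $Y$-picture coming from $F$-invariance and runs an injective, monotone matching between the bracket factors $x_1^{\tilde\omega'}x_n^{\omega}$ and the rows of $W$; your explicit count of how many column-one entries can be $\leq\tilde\omega_t$ is a reasonable fleshing-out of the paper's terse ``it follows that $\tilde\omega_{i,n}<\omega_{i,1}$.'' The gap is upstream, in your justification of (1) and (2) (which the paper simply declares immediate). Your no-cancellation step rests on the claim that $(f,Y)\mapsto f\cdot L(\widetilde P(Y))$ is injective, with ``$Y$ recovered from $L(\widetilde P(Y))$ by reading off the pairs and singles.'' That recovery is not well defined: the monomial only records the multiset of $x_1$-variables, not which of them are second-column entries $\tilde\omega_u'$ and which come from singletons, and distinct standard tableaux can collide. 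Concretely, take $\alpha=(1,1)$, $\beta=(2,1)$, $\gamma=(3,1)$ and
$$
Y_1=\begin{pmatrix}\alpha&\beta\\ \gamma&\end{pmatrix},\qquad Y_2=\begin{pmatrix}\alpha&\gamma\\ \beta&\end{pmatrix}.
$$
Both are standard (rows strictly increasing, first columns $\alpha\leq\gamma$ and $\alpha\leq\beta$ nondecreasing), yet a direct expansion gives $L(\widetilde P(Y_1))=L(\widetilde P(Y_2))=x_1^{(2,0)}x_1^{(3,0)}x_n^{(1,1)}$. Hence cancellation among your ``minima'' genuinely occurs: $h:=\widetilde P(Y_1)-\widetilde P(Y_2)=-y_1^{\alpha}[y^{\beta},y^{\gamma}]$ is a nonzero element of $A^F$ whose leading monomial is $x_1^{(1,0)}x_1^{(3,0)}x_n^{(2,1)}$, and one checks that no standard $Y$ produces this monomial (the unique positive-weight $x_n$-variable forces $\omega_1=\beta$, and neither completion by $\alpha$ in the second column nor by $\alpha$ as a singleton satisfies the standardness inequalities).

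So the step ``no cancellation occurs'' is not merely unproved but false as you have set it up, and your derivations of (1), (2), and hence (3) do not go through. Repairing this requires a genuinely new ingredient that your proposal does not supply: either an argument that after such cancellations the surviving lowest monomial is still of the required form (the example above shows this needs care, since $x_1^{(1,0)}x_1^{(3,0)}x_n^{(2,1)}$ is not $fL(\widetilde P(Y))$ for standard $Y$ under the stated order), or a different choice of monomial order or standardness convention under which the leading monomials of standard monomials are pairwise distinct. Two smaller points: the right-hand side of \eqref{ymonomial} should read $x_1^{\tilde\omega_{q+1}}\cdots x_1^{\tilde\omega_{q+r}}$ (since $y_1^{\omega}=x_1^{\tilde\omega}$), and with that correction your count of singleton entries $\leq\tilde\omega_t$ in part (3) should allow for equalities $\tilde\omega_{q+m}=\tilde\omega_t$; the bound $t-1+r<r+t$ still suffices, but the inequality you wrote is not quite the one you need.
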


\begin{proof}
Parts (1) and (2) are immediate. Let $h$ be as in (3). Then we know that there is a standard $W=(\omega_{ij})$ as in \eqref{eq:W} such that $L(h)=L(P(W))$. On the other hand, we also know that there is a standard $Y$ as in \eqref{eq:Y} such that $L(h)=f(x_n^{(j,0)},x_t^\omega)L(\widetilde P(Y))$ where $f$ is homogeneous. Suppose that $\wt(\omega_{i,n})>0$ for  $i>i_0$. Then for $i>i_0$  there is an $\omega_{i',1}$ such that $\tilde\omega_{i,n}<\omega_{i',1}$ and for $i_0<i<j$ we have that $\omega_{i',1}\leq\omega_{j',1}$ and that $i'\neq j'$.  It follows that we have $\tilde\omega_{i,n}<\omega_{i,1}$ for $i>i_0$. \end{proof}

\begin{thm}\label{thm:sln}
Let $(V,G)=(\ell\C^n,\SL_n)$. Then 
$$
p_\infty^*\cO(Z_\infty)=\cO(V_\infty)^{G_\infty}.
$$
\end{thm}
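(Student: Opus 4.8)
The plan is to establish surjectivity of $p_\infty^*$ by a descending initial-term induction with respect to the monomial order underlying $L$. Since the inclusion $p_\infty^*\cO(Z_\infty)\subseteq\cO(V_\infty)^{G_\infty}$ always holds and both sides are bigraded by degree and weight, it suffices to fix a nonzero bihomogeneous $h\in\cO(V_\infty)^{G_\infty}$ and produce an element $g\in p_\infty^*\cO(Z_\infty)$ of the same degree and weight with $L(g)=L(h)$. After rescaling $g$ so that the leading coefficients of $g$ and $h$ agree, $h-g$ is again a bihomogeneous $G_\infty$-invariant of the same degree and weight, but with strictly larger leading monomial. Since there are only finitely many monomials of a given degree and weight, this leading monomial can only increase finitely often, so the process terminates and exhibits $h$ as an element of the subalgebra $p_\infty^*\cO(Z_\infty)$.

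To produce $g$ I would invoke Corollary \ref{cor:main}(3): there is a standard tableau $W=(\omega_{i,j})$ as in \eqref{eq:W} with $L(h)=L(P(W))=Q(W)$ (the last equality being Proposition \ref{prop:stdmonom}(2)), and with the extra property that in each row, whenever $\wt(\omega_{i,n})>0$ one has $\tilde\omega_{i,n}<\omega_{i,1}$. The key combinatorial step is to unwind this condition row by row. Writing a row as $\omega_{i,1}<\dots<\omega_{i,n}$, so that the weights are nondecreasing along the row, the inequality $\tilde\omega_{i,n}<\omega_{i,1}$ forces the weights occurring in the row to take at most two consecutive values $a$ and $a+1$; moreover, when both values occur it forces every column index carrying weight $a+1$ to be strictly smaller than every column index carrying weight $a$ (and in particular all $n$ column indices in the row are distinct). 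This is precisely the shape of leading term produced by a derivative of a bracket.

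Concretely, the second of the Remarks preceding Proposition \ref{prop:stdmonom} computes $L(D^k[x^{(j_1,0)},\dots,x^{(j_n,0)}])$ for $j_1<\dots<j_n$ and $k=na+b$, $0\le b<n$: the weight-$(a+1)$ columns are the $b$ smallest indices and the weight-$a$ columns are the $n-b$ largest. Thus each row of $W$ is realized as the leading term of $g_i:=D^{k_i}[x^{(j_{i,1},0)},\dots,x^{(j_{i,n},0)}]\in p_\infty^*\cO(Z_\infty)$, where $a_i$ is the lower weight occurring in the row, $b_i$ is the number of weight-$(a_i+1)$ entries, and $k_i=na_i+b_i$. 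Setting $g=\prod_i g_i$ and using the multiplicativity of $L$ (the same property used in Proposition \ref{prop:stdmonom}(2)), I obtain $L(g)=\prod_i L(g_i)=Q(W)=L(h)$, with $g$ of the correct degree $ns$ and weight $\sum_i k_i=\wt(h)$. This completes the inductive step.

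The main obstacle is the middle paragraph: verifying that the leading-term condition of Corollary \ref{cor:main}(3) is equivalent to the two-consecutive-weights shape with the stated index ordering, and matching that shape exactly against the $D^k$-of-a-bracket computation in the Remarks. Everything else, namely the initial-term induction together with its termination and the multiplicativity of $L$, is formal once this dictionary between admissible rows of $W$ and leading terms of derivatives of brackets is in place.
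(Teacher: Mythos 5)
Your proposal is correct and follows essentially the same route as the paper's proof: both use Corollary \ref{cor:main}(3) to write $L(h)=L(P(W))$ for a standard tableau whose rows satisfy $\tilde\omega_{i,n}<\omega_{i,1}$, decode that condition into the two-consecutive-weights shape, match each row against $L(D^{k}[x^{(j_1,0)},\dots,x^{(j_n,0)}])$ from the Remarks, and then induct on the (lowest) leading monomial. Your explicit termination argument and the appeal to multiplicativity of $L$ merely spell out steps the paper leaves implicit.
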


\begin{proof}
Let $h\in\cO(V_\infty)^{G_\infty}$. We may assume that $h$ is homogeneous of a fixed weight. By Corollary \ref{cor:main}, $L(h)=L(P(W))$ where in the $i$th row of $W$ we have $\omega_{i,1}>\tilde \omega_{i,n}$ if $\wt(\omega_{i,n})>0$. Whenever we have $\omega_{i,1}>\tilde\omega_{i,n}$ we have that $\omega_{i,1}<\dots<\omega_{i,n}$ is of the form $(j_1,k_1)<\dots<(j_n,k_n)$ where $k_1=k_n$ or $k_n=k_1+1$ and $j_n<j_1$. In the former case we have a sequence $(j_1,a)<\dots<(j_n,a)$ where $a>0$ and $j_1<\dots<j_n$. In the latter case we have a sequence $(j_{b+1},a)<\dots<(j_n,a)<(j_1,a+1)<\dots<(j_b,a+1)$ where $j_1<\dots<j_n$ and $a\geq 0$. Thus the factor of $L(h)$ corresponding to the $i$th row of $W$ is $L(D^k([x^{(j_1,0)},\dots,x^{(j_n,0)}]))$ where $k$ is the sum of the $k_j$. Hence there is a homogeneous element $f$ of $p_\infty^*\cO(Z_\infty)$ of the same weight and degree as $h$ such that $L(h-p_\infty^*(f))>L(h)$. By induction we see that $h\in p_\infty^*\cO(Z_\infty)$.
\end{proof}
 
We have not shown that $p_\infty^*$ is injective, hence we have not shown that $\cO(Z_\infty)$ is reduced. Using techniques as in \S \ref{sec:cstar} one can show that $p_\infty^*$ is an isomorphism when $n=2$. We now show that this is not the case for $n=3$.
 
 \begin{example}\label{ex:sl3}
 Let $(V,G)=(6\C^3,\SL_3)$. Let $[abc]$ denote the determinant $[x^{(a,0)},x^{(b,0)},x^{(c,0)}]$ where $1\leq a<b<c\leq 6$. Consider the corresponding element $f_{abc}\in\cO(Z)$. Now the generators and relations transform by representations of $\SL_6$.  The generators transform by $\bigwedge^3(\C^6)$ and the relations  are generated by quadratic expressions which transform by the adjoint representation of $\SL_6$. The relations involving all six indices are those fixed by the maximal torus $T$ of $\SL_6$ and these span a space of dimension $\dim T=5$. Now consider all the possible terms $f_{abc}Df_{def}$ and $(Df_{abc})f_{def}$ where $\{a,b,c,d,e,f\}=\{1,\dots,6\}$. It is easy to see that there are six cases where the pullback of these terms to $\cO(V_1)$ contains a nonstandard monomial. Since we only have 5 relations to straighten with, we see that one of the straightening relations in $\cO(V_\infty)$ does not come from the relations of $\cO(Z)$. Hence $p_\infty^*$ is not injective. Since $p_\infty$ is dominant,   the kernel of $p_\infty^*$ consists of nilpotent elements. Hence $Z_\infty$ is not reduced. \end{example}

\end{document}